\newtheorem{theorem}{Theorem}[subsection]
\newtheorem{lemma}[theorem]{Lemma}
\newtheorem{definition}[theorem]{Definition}
\newtheorem{corollary}[theorem]{Corollary}
\newtheorem{proposition}[theorem]{Proposition}
\newtheorem{remark}[theorem]{Remark}
\newtheorem{example}[theorem]{Example}
\newtheorem{assumption}[theorem]{Assumption}
\newcommand{\CC}{\mathbb{C}}
\newcommand{\QQ}{\mathbb{Q}}
\newcommand{\RR}{\mathbb{R}}
\newcommand{\ZZ}{\mathbb{Z}}
\newcommand{\I}{\mathrm{I}}
\newcommand{\II}{\mathrm{II}}
\newcommand{\Hom}{\mathrm{Hom}}
\newcommand{\Sp}{\mathrm{Spec}}
\newcommand{\be}{\mathbf{e}} % vector-e
\renewcommand{\AA}{\mathbb{A}}
\newcommand{\br}{\mathbf{r}}
\newcommand{\fm}{\mathfrak{m}}
\newcommand\sC{\mathscr{C}}
\newcommand\sI{\mathscr{I}}
\newcommand\sJ{\mathscr{J}}
\newcommand\sN{\mathscr{N}}
\newcommand\sO{\mathscr{O}}
\newcommand\cX{\mathcal{X}}
\newcommand\cY{\mathcal{Y}}
\newcommand{\cK}{\mathcal{K}}
\newcommand{\PP}{\mathbb{P}}
\DeclareMathOperator{\sProj}{\underline{\mathscr{P}roj}} % Stacky Proj
\newcommand{\B}{\mathrm{B}} % Classifying stack
\newcommand{\cC}{\mathcal{C}}
\newcommand{\cL}{\mathcal{L}}
\newcommand{\cO}{\mathcal{O}}
\newcommand{\wX}{\widetilde{X}}
\newcommand{\wY}{\widetilde{Y}}
\newcommand{\cT}{\mathcal{T}}
\newcommand{\cM}{\mathcal{M}}
\newcommand{\cN}{\mathcal{N}}
\newcommand{\Gm}{\mathbb{G}_m}
\newcommand{\Spec}{\underline{\mathrm{Spec}}}
\newcommand{\sB}{\mathscr{B}}
\newcommand{\Sym}{\mathrm{Sym}}
\newcommand{\amb}{\mathrm{amb}}
\newif\ifmoditem
\newcommand{\setupmodenumerate}{%
  \global\moditemfalse
  \let\origmakelabel\makelabel
  \def\moditem##1{\global\moditemtrue\def\mesymbol{##1}\item}%
  \def\makelabel##1{%
    \origmakelabel{##1\ifmoditem\rlap{\mesymbol}\fi\enspace}%
    \global\moditemfalse}%
}
\title{Finite generation of (Chen-Ruan) Chow rings of weighted blow ups}
\title{Stringy Chow rings and weighted blow ups}
\author{Qiangru Kuang, Yeqin Liu, Rachel Webb, Weihong Xu}
\begin{document}
\begin{abstract}
We compute the 
stringy chow ring of a general Deligne-Mumford stack of the form $[X/G]$ for a smooth variety $X$ and diagonalizable group scheme $G$, working over a base field that is not necessarily algebraically closed. We then specialize to the stringy chow ring of the weighted blow up of a smooth variety along a smooth center. We explore finite generation properties of this ring.
	\end{abstract}

    \maketitle
    \setcounter{tocdepth}{1}
	\tableofcontents

\section{Introduction}
In \cite{AGV02}, Abramovich-Graber-Vistoli define the \emph{stringy chow ring} $A^*_{st}(\cX)$ for any smooth tame Deligne-Mumford stack $\cX$ over a field $k$. 
In this article we study $A^*_{st}(\cX)$ when $\cX = [\wX/G]$ for some smooth variety $\wX$ and diagonalizable group scheme $G$. The characteristic of $k$ will be prime to the orders of stabilizer groups of $\cX$, thanks to the assumption that $\cX$ is tame Deligne-Mumford, but we do not assume the field is algebraically closed. This context generalizes that of 
\cite{BCS} and \cite{JT}, which compute $A^*_{st}(\cX)_{\QQ}$ and $A^*_{st}(\cX)$, respectively, when $\cX = [(\AA^n \setminus Z) / \Gm^k]$ for certain choices of closed subset $Z \subset \AA^n$ and when $k=\CC$. It also generalizes \cite{GHK} which studies the cohomological analog of $A^*_{st}(\cX)$, again over $k=\CC$. We are particularly interested in the example of \textit{weighted blowups}:
if $X$ is variety and $Y \subset X$ is a subvariety, a weighted blowup $\sB l_Y X$ of $X$ along $Y$ is a certain Deligne-Mumford stack with coarse space equal to the usual blowup $Bl_Y X$ (see \ref{S:wblow} for a more detailed review).

\subsection{Results}
The results in this article are in two directions. The first is the problem of computing $A^*_{st}(\cX)$ when $\cX = [\wX/G]$ for some smooth variety $\wX$ and diagonalizable group scheme $G$ over a field $k$. We do not require $k$ to be algebraically closed. We explicitly determine the following ingredients and show that (in the appropriate sense) they are independent of the ground field:
\begin{enumerate}
\item The \textit{cyclotomic inertia stack}. This is an algebraic stack with a representable morphism to $\cX$ whose Chow group is isomorphic to $A^*_{st}(\cX)$ as $\ZZ$-modules.
See \ref{sec:sectors} for our precise result.
\item The \textit{obstruction sheaf}. This is the key technical ingredient to computing the multiplication on $A^*_{st}(\cX)$. See \ref{prop:obstruction}  for our precise result.
\end{enumerate}

The result for cyclotomic inertia is perhaps surprising, since the inertia stack itself depends in a more subtle way on the ground field: indeed, the cyclotomic inertia stack is the union of  open and closed substacks indexed by injective homomorphisms $\mu_r \to G$, whereas the inertia stack is in general a twisted form of this.

Our formula for the obstruction sheaf is reminiscent of the formula in \cite[Prop~6.3]{BCS} for toric Deligne-Mumford stacks over $\CC$, and as noted in \cite[p. 105]{GHK} the proof in \cite{BCS} is valid over the complex numbers for general quotients $[\wX/G]$ of the kind we consider. In generalizing this result to an arbitrary base field, we derive the analog, over an arbitrary field of characteristic prime to the orders of the monodromy groups, of the well-known result that there is a unique cover of $\PP^1$ ramified over three points with prescribed monodromy at two of the branch points (see Corollary \ref{cor:fantechi-goettsche}). 

From points (1) and (2) above we derive the following corollary. 
\begin{theorem}[Corollary \ref{cor:hom}]\label{thm:main}
Let $K$ be a field extension of $k$ and let $\cX_{K}$ denote the base change of $\cX$ to the larger field. Then there is a canonical ring homomorphism 
\begin{equation}\label{eq:ringhom}A^*(\cX_K) \otimes_{A^*(\cX)} A^*_{st}(\cX) \to A^*_{st}(\cX_{K})\end{equation}
that is an isomorphism whenever the natural maps $A^*(\wX_K) \otimes_{A^*(\wX)} A^*(\wX^H) \to A^*(\wX^H_K)$ are isomorphisms for all subgroups $H \subset G$.
\end{theorem}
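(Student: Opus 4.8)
The plan is to assemble (\ref{eq:ringhom}) from two elementary structure maps via a universal property, and then to prove it is an isomorphism one sector at a time, the single nonformal input being the base-change behavior of the obstruction sheaf established in Proposition~\ref{prop:obstruction}.

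\textbf{Construction of the map.} The untwisted sector identifies $A^*(\cX)$ with the unit component of $A^*_{st}(\cX)$, so $A^*_{st}(\cX)$ is naturally a (commutative) $A^*(\cX)$-algebra, and likewise for $A^*_{st}(\cX_K)$ over $A^*(\cX_K)$. Flat pullback along the base-change morphism $f\colon \cX_K\to\cX$ gives a commuting square relating the ring maps $A^*(\cX)\to A^*(\cX_K)$ and $A^*_{st}(\cX)\to A^*_{st}(\cX_K)$. The only point that is not formal is that $f^*\colon A^*_{st}(\cX)\to A^*_{st}(\cX_K)$ respects the \emph{stringy} product and not merely the additive structure: since that product is built from the Euler class of the obstruction sheaf, this is exactly where the explicit formula of Proposition~\ref{prop:obstruction}, which is manifestly compatible with base change, is needed. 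Granting multiplicativity of $f^*$, the universal property of the tensor product as the pushout of commutative rings under $A^*(\cX)$ produces the canonical ring homomorphism (\ref{eq:ringhom}).

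\textbf{Reduction to sectors.} By our description of the cyclotomic inertia stack (\ref{sec:sectors}), one has $A^*_{st}(\cX)=\bigoplus_\phi A^*([\wX^{H_\phi}/G])$ (up to the age shift, which is irrelevant for the present comparison), the sum running over the injective homomorphisms $\phi\colon\mu_r\to G$ with image $H_\phi$. The key feature is that this index set, together with each fixed locus, is insensitive to the ground field, and formation of fixed loci commutes with base change for diagonalizable $G$, so the \emph{same} index set computes $A^*_{st}(\cX_K)=\bigoplus_\phi A^*([\wX^{H_\phi}_K/G_K])$. As $A^*(\cX_K)\otimes_{A^*(\cX)}-$ commutes with direct sums, (\ref{eq:ringhom}) is the direct sum over $\phi$ of the maps
\[
A^*(\cX_K)\otimes_{A^*(\cX)}A^*([\wX^{H_\phi}/G])\;\longrightarrow\;A^*([\wX^{H_\phi}_K/G_K]),
\]
and it suffices to prove each is an isomorphism of $A^*(\cX_K)$-modules; a bijective ring homomorphism is then automatically a ring isomorphism.

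\textbf{From each sector to the hypothesis.} Fix $H=H_\phi$, write $A^*([\wX^{H}/G])=A^*_G(\wX^{H})$ and $A^*(\cX)=A^*_G(\wX)$, and recall that the $A^*(\cX)$-module structure on the sector is through the restriction $A^*_G(\wX)\to A^*_G(\wX^{H})$ along the regular immersion $\wX^H\hookrightarrow\wX$. I would pass from $G$-equivariant to ordinary Chow groups via the Edidin--Graham approximation $A^*_G(-)=A^*\big((-\times U)/G\big)$, where $U$ is a high-codimension open in a representation on which the diagonalizable (hence special) group $G$ acts freely, so that the mixed quotients are honest smooth varieties built functorially from $\wX$ and $\wX^H$. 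Since these constructions commute with base change, the sector-wise statement becomes the ordinary base-change comparison for the approximation spaces, which I would deduce from the hypothesized isomorphisms $A^*(\wX_K)\otimes_{A^*(\wX)}A^*(\wX^{H})\xrightarrow{\sim}A^*(\wX^{H}_K)$ by propagating them along the fibrations $(\wX\times U)/G,\,(\wX^{H}\times U)/G\to U/G$ with fibers $\wX,\,\wX^H$.

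\textbf{Main obstacle.} Two steps carry the real content. The first is multiplicativity of $f^*$, which rests entirely on the base-change behavior of the obstruction sheaf and is the reason Proposition~\ref{prop:obstruction} is proved beforehand. The second, and in my view the genuine difficulty, is making the equivariant-to-ordinary translation of the last step compatible with the module structures: one must check that the comparison isomorphisms intertwine the restriction maps $A^*_G(\wX)\to A^*_G(\wX^{H})$ with their ordinary counterparts $A^*(\wX)\to A^*(\wX^H)$, so that the sector-wise isomorphisms are isomorphisms of $A^*(\cX_K)$-modules and descend through $\otimes_{A^*(\cX)}$, rather than being mere abstract isomorphisms of groups. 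Verifying that the hypothesis, stated only for $\wX$ and its fixed loci, actually propagates to the mixed quotients in a way respecting these pullbacks is the step I expect to require the most care.
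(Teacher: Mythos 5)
Your first two steps are essentially the paper's own proof of Corollary \ref{cor:hom}: multiplicativity of $f^*$ holds because flat pullback commutes with $ev_i^*$ and with $\overline{ev}_{3,*}$, and because $f^*Ob(\zeta,\eta)=Ob(\zeta,\eta)$, the normal bundles $N_{\wX^H/\wX}$ being compatible with base change (this is exactly where Proposition \ref{prop:obstruction} enters); bijectivity is then checked sector by sector using the field-independence of the decomposition in Lemma \ref{lem:sectors}, and the tensor product commuting with the direct sum over sectors. Up to that point you and the paper agree.

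The genuine gap is your third step, and it is worth noting that the paper sidesteps this step rather than proving it: Corollary \ref{cor:hom} phrases its hypothesis directly on the sectors, namely that $A^*(\cX_K)\otimes_{A^*(\cX)}A^*(\I_\mu(\zeta))\to A^*(\I_\mu(\zeta)_K)$ be an isomorphism, where $A^*(\I_\mu(\zeta))=A^*([\wX^H/G])$ is an \emph{equivariant} Chow ring; no translation to the ordinary Chow rings $A^*(\wX^H)$ appearing in the theorem's hypothesis is carried out anywhere in the paper. Your proposed bridge breaks at a concrete point: a diagonalizable group is \emph{not} special in general. Special groups are connected, and $\mu_r$ has nontrivial torsors over fields (by Kummer theory, $H^1(k,\mu_r)\cong k^*/(k^*)^r$), so whenever $G$ has $\mu_r$-factors the Edidin--Graham torsor $U\to U/G$ is only \'etale-locally trivial, and the fibrations $(\wX^H\times U)/G\to U/G$ along which you want to propagate the hypothesis cannot be trivialized Zariski-locally. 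Even when $G$ is a torus, so that Zariski-local triviality does hold, the propagation is not formal: Chow groups have no K\"unneth formula, so passing the hypothesized isomorphism from the fiber $\wX^H$ to the mixed space requires an induction over a stratification of $U/G$ using localization sequences, together with the module-structure compatibility you yourself flag, and none of this is supplied. As written, your final step is a sketch whose one concrete supporting claim is false; to prove the theorem literally as stated you would need to repair this bridge (for instance by embedding $G$ in a torus $T$ and replacing $\wX$ by $\wX\times^G T$, which however changes the varieties for which the hypothesis must be assumed), or else take the hypothesis in its sector-wise, equivariant form exactly as the paper's corollary does.
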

For example, \eqref{eq:ringhom} is an isomorphism when $\cX$ is a toric Deligne-Mumford stack.

The second direction is the problem of understanding the ring $A^*_{st}(\cX)$ when $\cX = \sB l_Y X$ is a weighted blowup. In Section \ref{S:summary} we apply points (1) and (2) and work of Arena-Obinna on the ordinary chow ring of weighted blowups \cite{AO} to describe $A^*_{st}(\sB l_Y X)$ very explicitly. Our description of $A^*_{st}(\sB l_Y X)$ requires only the assumption that the Rees algebra defining$\sB l_Y X$ be regular (Assumption \ref{a1}), essentially the minimal assumption that also guarantees the stack $\sB l_Y X$ is smooth. 

We then turn to the problem of
determining when $A^*_{st}(\sB l_Y X)$ is finitely generated over a more familiar ring. As a group, $A^*_{st}(\sB l_Y X)$ decomposes as a direct sum of \textit{sectors}
\[
A^*_{st}(\sB l_Y X) = \bigoplus_{\zeta } A^*(\I_{\mu}(\zeta))e_\zeta  
\]
where $\zeta: \mu_r \to \Gm$ is an injective homomorphism and $\I_{\mu}(\zeta)$ is a certain (often empty) substack of $\sB l_Y X$. If we write $1$ for the trivial homomorphism $\{1\} \hookrightarrow \Gm$, then a special example is $\I_{\mu}(1) = \sB l_Y X$. In fact $A^*(\I_{\mu}(1))e_1$ is a subring of $A^*_{st}(\sB l_Y X)$, and a natural question is whether $A^*_{st}(\sB l_Y X)$ is generated as an algebra over $A^*(\I_{\mu}(1))$ by the elements $1e_\zeta$. 
Our answer requires a more restrictive assumption \ref{a2} on the Rees algebra defining $\sB l_Y X$; in particular, Assumption \ref{a2} guarantees that the exceptional divisor of $\sB l_Y X$ is a weighted projective bundle over $Y$, as opposed to a twisted weighted projective bundle in general.

\begin{theorem}[Theorem \ref{thm:later}]\label{thm:main}
Assume $\sB l_Y X$ satisfies Assumption \ref{a2}. Then the restriction $A^*(X) \to A^*(Y)$ is surjective if and only if the ring $A^*_{st}(\cX)$ is generated as an algebra over $A^*(\I_\mu(1))$ by the elements $1e_\zeta$. In this case, $A^*_{st}(\cX)$ is even a finitely generated algebra over $A^*(\cX)$ modulo explicit relations \eqref{eq:presentation}. 
\end{theorem}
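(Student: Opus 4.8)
\emph{Proof proposal.} The plan is to exploit the decomposition $A^*_{st}(\cX) = \bigoplus_\zeta A^*(\I_\mu(\zeta))e_\zeta$ as an algebra graded by the monoid of homomorphisms $\zeta$, with $A^*(\I_\mu(1)) = A^*(\cX)$ the identity sector and the product $1e_\zeta \cdot 1e_\eta$ equal to the Euler class of the obstruction bundle of \ref{prop:obstruction} times $1e_{\zeta\eta}$. The first step is to make the restriction map $A^*(\cX) \to A^*(\I_\mu(\zeta))$ explicit. Under Assumption \ref{a2} the exceptional divisor $\cE \subset \cX$ is an honest weighted projective bundle over $Y$ and each $\I_\mu(\zeta)$ is a sub-weighted-projective-bundle of $\cE$ over $Y$, so by the projective bundle formula $A^*(\I_\mu(\zeta))$ is a free $A^*(Y)$-module on powers of a tautological class $\xi_\zeta$. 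Using the presentation of $A^*(\cX)$ from \cite{AO}, I would verify that this restriction carries the subring $A^*(X)$ into $A^*(Y) \subseteq A^*(\I_\mu(\zeta))$ through the center restriction $A^*(X) \to A^*(Y)$, and carries the exceptional class to a unit multiple of $\xi_\zeta$. Consequently $\operatorname{im}(A^*(\cX) \to A^*(\I_\mu(\zeta)))$ is the subring generated by $\operatorname{im}(A^*(X) \to A^*(Y))$ and $\xi_\zeta$.

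For the forward implication, assume $A^*(X) \to A^*(Y)$ is surjective. Then by the last identity the restriction $A^*(\cX) \to A^*(\I_\mu(\zeta))$ is surjective for every $\zeta$, so $A^*(\cX) \cdot 1e_\zeta = A^*(\I_\mu(\zeta))e_\zeta$; thus $\{1e_\zeta\}$ generate $A^*_{st}(\cX)$ over $A^*(\I_\mu(1))$, with products not even needed.

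For the reverse implication I would argue contrapositively, tracking the $A^*(Y)$-coefficients of elements in the $\xi_\zeta$-filtration. By the computation above the $A^*(\cX)$-action contributes only coefficients in $\operatorname{im}(A^*(X) \to A^*(Y))$, so I must show that the obstruction Euler classes occurring in any product $1e_{\zeta_1}\cdots 1e_{\zeta_m}$ also have $A^*(Y)$-coefficients in this image; granting this, if $\beta \in A^*(Y)$ satisfies $\beta \notin \operatorname{im}(A^*(X) \to A^*(Y))$ then $\beta e_\zeta$ cannot lie in the generated subalgebra, so generation forces surjectivity. To control the obstruction classes I would combine the age grading — which makes the product of two nontrivial sectors land strictly above $1e_{\zeta\eta}$ whenever the obstruction is nonzero — with the explicit formula of \ref{prop:obstruction}, under which $e(\mathrm{Ob})$ is a product of first Chern classes of the graded line bundles of the weighted normal bundle. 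Rewriting such products via the weighted projective bundle relations and the self-intersection identity $i^*i_* = c_{\mathrm{top}}(N_{Y/X})$ for $i\colon Y \hookrightarrow X$ should express them as polynomials in $\xi_\zeta$ with coefficients in $\operatorname{im}(A^*(X)\to A^*(Y))$. I expect this to be the main obstacle: a priori the lower Chern classes $c_j(N_{Y/X})$ need not lie in $\operatorname{im}(A^*(X)\to A^*(Y))$, and it is precisely Assumption \ref{a2} (the untwistedness of the weighted projective bundle) that should force the relevant products to collapse onto the tautological class and the top Chern class.

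Finally, for finite generation and the presentation \eqref{eq:presentation}, I would note that $\I_\mu(\zeta)$ is nonempty only for the finitely many $\zeta$ whose order divides one of the finitely many stabilizer orders, so $\{1e_\zeta\}$ is a finite generating set over $A^*(\cX)$ once surjectivity holds. The defining relations fall into two families: for each $\zeta$ the annihilator relation $\ker(A^*(\cX) \to A^*(\I_\mu(\zeta)))\cdot 1e_\zeta = 0$, read off from the restriction computed in the first step, and for each pair $(\zeta,\eta)$ the multiplicative relation $1e_\zeta\cdot 1e_\eta = e(\mathrm{Ob}_{\zeta,\eta})\,1e_{\zeta\eta}$ with the right-hand side re-expressed in the generators via the obstruction computation (and read in $A^*(\cX)$ when $\zeta\eta = 1$). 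Assembling these yields the stated presentation.
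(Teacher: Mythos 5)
Your first step contains the central error, and the reverse implication collapses with it. You claim that the image of the restriction $A^*(\cX) \to A^*(\I_\mu(\zeta))$ is the subring generated by $i^*A^*(X)$ and a tautological class $\xi_\zeta$. This is false in general. By the presentation \eqref{eq:bl-chow} and Lemma \ref{lem:restriction}, $A^*(\cX)$ is $\bigl(A^*(Y)[t]\cdot t \oplus A^*(X)\bigr)$ modulo relations, and restriction to a sector sends $(q(t),\beta)$ to $[q(t)+i^*\beta]$; thus the image contains $[\alpha t]$ for \emph{every} $\alpha \in A^*(Y)$, with only the \emph{constant} term constrained to lie in $i^*A^*(X)$. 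Equivalently, $A^*(\cX)$ is not generated as a ring by $f^*A^*(X)$ and the exceptional class: it also contains $j_*$ of arbitrary classes on $\cY$. (Already for the ordinary blowup of a smooth curve $C$ of positive genus in $\PP^3$ over $\CC$, the classes $j_*g^*\alpha$ with $\alpha \in \Pic^0(C)$ lie outside the countable subring generated by $f^*A^*(\PP^3)$ and the exceptional divisor.) A smaller inaccuracy in the same step: $A^*(\I_\mu(\zeta))$ is not a free $A^*(Y)$-module on powers of $t$, since the relation polynomial $\prod_{\zeta^{b_k}=1}\be_k$ has leading coefficient $\prod b_k^{r_k}$, which is not $1$ in general.

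Because of this, the coefficient-tracking in your reverse implication cannot start: the premise that ``the $A^*(\cX)$-action contributes only coefficients in $i^*A^*(X)$'' is false, and the obstacle you flag (lower Chern classes of the normal bundle possibly escaping $i^*A^*(X)$) is a red herring --- in the correct description of the restriction image, coefficients of positive powers of $t$ are unconstrained, so ambience of the obstruction factors $C_{\zeta\eta}$ only requires their constant terms, which are the top Chern classes $c_{r_k}\bigl((J_k/J_k^2)^\vee\bigr) = i^*[V(J_k)]$ by self-intersection; this is exactly how Proposition \ref{prop:amb} is proved. What your proposal genuinely lacks is a substitute for the paper's Lemma \ref{lem:classic}, which is the real content of the reverse implication: if a constant class $\beta \in A^*(Y)$ is congruent to an ambient class $q(t)+i^*\gamma$ modulo the bundle relation $P_\zeta(t)$, one cannot simply compare coefficients, because the relation mixes $t$-degrees. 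The paper extracts constant terms to get $\beta = i^*\gamma + s(0)P_\zeta(0)$, observes that $P_\zeta(0)$ lies in $i^*A^*(X)$ by self-intersection, and then iterates this decomposition, using nilpotence of $P_\zeta(0)$ to terminate; only then does generation force surjectivity of $i^*$. Your forward implication and the rough shape of the final presentation (annihilator relations plus product relations $e_\zeta e_\eta = C_{\zeta\eta}e_{\zeta\eta}$) do match the paper, which obtains them via Proposition \ref{prop:amb}, Proposition \ref{prop:prod}, and the presentation of $A^*(\cX)$ from \cite[Cor 6.5]{AO}.
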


In the course of proving Theorem \ref{thm:main}, we show in Lemma \ref{lem:classic} and Corollary \ref{cor:classic} that $A^*(Y)$ is equal to the restriction image of $A^*(X)$ (resp. is module-finite or algebra-finite over this image) if and only if the chow group of the exceptional divisor is equal to the restriction image of $A^*(\sB l_Y X)$ (resp. is module-finite or algebra-finite over this image). This observation may be of interest even for ordinary blowups.

Even when $A^*(X) \to A^*(Y)$ is not surjective, we define the subgroup $A^{*}_{st}(\sB l_Y X)^\amb$ of $A^*_{st}(\sB l_Y X)$ to be the set of all elements of the form $\alpha e_1 \star e_\zeta$. Our final result is the following:

\begin{proposition}[Proposition \ref{prop:amb}]
Assume $\sB l_Y X$ satisfies Assumption \ref{a2}. Then $A^{*}_{st}(\sB l_Y X)^\amb$ is a subring of $A^*_{st}(\sB l_Y X)$, equal to the $A^*(\sB l_Y X)$-subalgebra generated by the $e_\zeta$.
\end{proposition}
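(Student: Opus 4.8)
The plan is to reduce the statement to two computations — the action of the untwisted sector by restriction, and the fact that a product of two twisted fundamental classes stays in $A^*_{st}(\sB l_Y X)^\amb$ — and then assemble the Proposition formally from these. Throughout write $\iota_\zeta\colon \I_\mu(\zeta)\hookrightarrow \sB l_Y X$ for the structural morphism of a sector, and recall that the untwisted sector is $\I_\mu(1)=\sB l_Y X$, so that $A^*(\I_\mu(1))e_1=A^*(\sB l_Y X)$ is the coefficient ring acting via $\star$. \emph{Step 1 (untwisted action).} First I would show that for $\alpha\in A^*(\sB l_Y X)$ one has $\alpha e_1\star e_\zeta=\iota_\zeta^*(\alpha)\,e_\zeta$. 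This is the standard statement that the untwisted sector acts on every sector by restriction, and it follows from Proposition~\ref{prop:obstruction}: for a triple of sectors one of whose entries is trivial the obstruction bundle has rank zero, so the triple product collapses to ordinary pullback along $\iota_\zeta$. In particular the generators of $A^*_{st}(\sB l_Y X)^\amb$ are exactly the classes $\iota_\zeta^*(\alpha)\,e_\zeta$, and
\[
A^*_{st}(\sB l_Y X)^\amb=\bigoplus_\zeta \iota_\zeta^*\bigl(A^*(\sB l_Y X)\bigr)\,e_\zeta .
\]

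The heart of the argument is \emph{Step 2 (key identity)}: for injective $\zeta,\zeta'$ with product $\zeta''=\zeta\zeta'$ I would produce a class $\omega_{\zeta,\zeta'}\in A^*(\sB l_Y X)$ with
\[
e_\zeta\star e_{\zeta'}=\iota_{\zeta''}^*(\omega_{\zeta,\zeta'})\,e_{\zeta''}
\]
(interpreted in the untwisted sector, i.e.\ inside $A^*(\sB l_Y X)e_1$, when $\zeta''=1$). Unwinding the definition of $\star$, the left side is $(\mathrm{ev}_3)_*\bigl(e(\mathrm{Obs})\cap[\cI]\bigr)$, where $\cI$ is the component of the double cyclotomic inertia carrying monodromies $(\zeta,\zeta',\overline{\zeta''})$ and $\mathrm{ev}_3\colon\cI\to\I_\mu(\zeta'')$ is the third evaluation. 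Under Assumption~\ref{a2} the sectors are honest weighted projective subbundles of the exceptional divisor, and Proposition~\ref{prop:obstruction} writes $\mathrm{Obs}$ as a sum of tensor powers of the tautological line bundle of this bundle. Since that line bundle is the restriction of a class on $\sB l_Y X$ and the evaluation maps are compatible with the tautological classes, $e(\mathrm{Obs})$ is $\mathrm{ev}_3^*$ of a class that is itself $\iota_{\zeta''}^*$ of something ambient; the projection formula for $\mathrm{ev}_3$, whose pushforward of $[\cI]$ is a multiple of $e_{\zeta''}$, then yields the displayed identity. I expect this to be the main obstacle, since it requires careful bookkeeping of the evaluation maps and of the precise tautological powers appearing in $\mathrm{Obs}$ under Assumption~\ref{a2}; everything else is formal.

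With Steps 1 and 2 in hand I would assemble the Proposition (\emph{Step 3}). The inclusion of $A^*_{st}(\sB l_Y X)^\amb$ into the $A^*(\sB l_Y X)$-subalgebra generated by the $e_\zeta$ is immediate from Step 1, since each generator $\iota_\zeta^*(\alpha)e_\zeta=\alpha e_1\star e_\zeta$ is visibly such an element. For the reverse inclusion I would induct on the length of a product $e_{\zeta_1}\star\cdots\star e_{\zeta_m}$: commutativity and associativity of $\star$ together with Step 2 collapse any such product to $\iota^*(\omega)\,e_{\zeta''}$ for a single $\zeta''$ and some $\omega\in A^*(\sB l_Y X)$, which is ambient, and reintroducing a coefficient from $A^*(\sB l_Y X)$ via Step 1 keeps us inside the ambient subgroup. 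The same computation gives closure under $\star$, since for two ambient elements $(\alpha e_1\star e_\zeta)\star(\beta e_1\star e_{\zeta'})=(\alpha\beta)\,e_1\star(e_\zeta\star e_{\zeta'})=\iota_{\zeta''}^*(\alpha\beta\,\omega_{\zeta,\zeta'})\,e_{\zeta''}$, again ambient. Hence $A^*_{st}(\sB l_Y X)^\amb$ is a subring and coincides with the $A^*(\sB l_Y X)$-subalgebra generated by the $e_\zeta$, as claimed.
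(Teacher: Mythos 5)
Your Steps 1 and 3 are sound and in fact mirror the structure of the paper's own proof: the paper likewise reduces closure under $\star$ to showing that the correction class $C_{\zeta\eta}$ in the product formula of Proposition \ref{prop:prod} (which already packages both the obstruction class and the normal-bundle class coming from the pushforward $\overline{ev}_{3,*}$, via Lemma \ref{lem:push}) is ambient, after which the identification of $A^*_{st}(\cX)^\amb$ with the $A^*(\cX)$-subalgebra generated by the $e_\zeta$ is formal. The genuine gap is in Step 2, which you rightly call the heart of the matter but whose sketched justification would fail. Proposition \ref{prop:obstruction} does \emph{not} express the obstruction bundle as a sum of tensor powers of the tautological line bundle: on a sector $\II_\mu(\zeta,\eta)$ it is a sum of the weight subbundles $\sN_a$, which under Assumption \ref{a2} are restrictions of the bundles $(J_k/J_k^2)^\vee$ of rank $r_k = \operatorname{codim}_X V(J_k)$, in general bigger than $1$. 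Accordingly, the classes that must be shown ambient are the polynomials
\[
\mathbf{e}_k = c_{r_k}\bigl((J_k/J_k^2)^\vee\bigr) + b_k\, t\, c_{r_k-1}\bigl((J_k/J_k^2)^\vee\bigr) + \dots + b_k^{r_k} t^{r_k},
\]
together with $\be_{-1} = -t$. The part divisible by $t$ is ambient for the easy reason you indicate (by \eqref{eq:fact}, $t$ restricts from $-[\cY]$), but the constant term $c_{r_k}\bigl((J_k/J_k^2)^\vee\bigr)$ is a class on $Y$ that has no a priori reason to lie in $i^*A^*(X)$; no amount of compatibility between evaluation maps and tautological classes will produce this.

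Closing this gap is precisely where Assumption \ref{a2} enters, and it is a geometric input rather than bookkeeping: since each $J_k$ is a global ideal cutting out a regular subvariety $V(J_k)\subset X$ containing $Y$, the self-intersection formula gives
\[
c_{r_k}\bigl((J_k/J_k^2)^\vee\bigr) = c_{r_k}\bigl(N_{V(J_k)/X}\bigr)\big|_Y = i^*[V(J_k)],
\]
so $\mathbf{e}_k = i^*[V(J_k)] + t\,q(t)$ for some $q(t) \in A^*(Y)[t]$, which by Lemma \ref{lem:restriction} is the restriction to the sector of a class on $\cX$. This furnishes your class $\omega_{\zeta,\zeta'}$ and repairs Step 2; with it inserted, your argument goes through and essentially coincides with the paper's proof. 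Note that under Assumption \ref{a1} alone the bundles $\sN_a$ exist only on $Y$ and no such identification is available --- which is a sign that the missing step cannot be purely formal and explains why the proposition is stated only under Assumption \ref{a2}.
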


\noindent

\subsection{Conventions and notation}
A $k$-group scheme is \textit{diagonalizable} if it is of the form $\Gm^t \times \mu_{r_1} \times \ldots \times \mu_{r_s}$ for some nonnegative integers $t, r_1, \ldots, r_s$. A diagonalizable $k$-group scheme $G$ has a \textit{Cartier dual} $k$-group scheme $D(G) := \Hom(G, {\Gm}_{, k})$ (here Hom is taken in the category of $k$-group schemes). We note that if $G$ is diagonalizable, then $D(G)$ is a constant group scheme associated to a finitely generated abelian group. We will say a finite diagonalizable $k$-group scheme is \textit{tame} if the characteristic of $k$ is prime to the integers $r_1, \ldots, r_s$.

If $X$ is a variety, and $\sI_\bullet$ is a sheaf of graded $\sO_X$-algebras, then as explained in \cite[Tag~0EKJ]{stacks-project} the scheme $\Spec_X(\sI_\bullet)$ naturally has a $\Gm$ action and we write
\[ 
\sProj_X(\sI_\bullet) := \left[\left(\Spec_X \left(\sI_\bullet\right)\setminus V(\sI_+)\right)/\Gm\right],
\]
where $\sI_+$ is the ideal of elements of positive degree. We note that the degree of an element $x \in \sI_\bullet$ is the negative of its weight as a function on the $\Gm$-scheme $\Spec_X(\sI_\bullet)$.

\subsection{Acknowledgements} This paper grew out of a project at the 2023 AGNES summer school on intersection theory. The authors are grateful to Dan Abramovich for suggesting the problem of computing the Chow ring of a weighted blowup and for many helpful conversations. The second author thanks Izzet Coskun for a computation in an earlier version of the paper. The third author also thanks Ming Hao Quek, Stephen Obinna, and Martin Olsson for helpful discussions. The third author was partially supported by a grant from the Simons Foundation. The fourth author thanks Tom Graber for helpful discussions.

\section{Stringy chow rings}\label{s:stringy}

In this section, we fix the following data:
\begin{itemize}
\item a ground field $k$,
\item a diagonalizable $k$-group scheme $G$, and 
\item a smooth $k$-variety $\wX$ with $G$-action.
\end{itemize}
We require that the quotient $\cX:=[\wX/G]$ is a tame Deligne-Mumford stack. This implies that $\cX$ is separated and the characteristic of $k$ is prime to the orders of the stabilizer groups of geometric points of the $G$-scheme $\wX$. Let $\I_\mu(\cX)$ denote the \textit{cyclotomic inertia stack}; this is the disjoint union over positive integers $r$ of the hom stacks $\mathrm{Hom}(B\mu_r, \cX)$.
Here we include the case $r=1$; the group scheme $\mu_1$ is trivial.

 Defined by Abramovich, Graber, and Vistoli in \cite{AGV02}, the stringy chow ring $A^*_{st}(\cX)$ of a smooth Deligne-Mumford $\cX$ is a graded ring structure on $A^*(\I_{\mu}(\cX))$. In this section we give a fairly explicit description of $A^*_{st}(\cX)$ when $\cX = [\wX/G]$ is as above. The results in this section are known when the ground field $k$ is the complex numbers. A key new ingredient is the construction of a unique ramified $G$-cover of $\PP^1$ from monodromy data at two marked points: over $\CC$ this is explained in \cite{FG03}, and the analog over an arbitrary field is our Corollary \ref{cor:fantechi-goettsche}.

\subsection{Cyclotomic inertia stacks}\label{sec:sectors}
We begin with a global quotient description for the cyclotomic inertia stack. If $\zeta: \mu_r \to G$ is an injective homomorphism, let $\wX^\zeta$ denote the fixed locus of the induced $\mu_r$ action on $\wX$. 

\begin{lemma}\label{lem:sectors}
There is a natural decomposition
\begin{equation}\label{eq:sectors}
	\I_\mu(\cX) = \coprod_{\zeta : \mu_r \to G} \I_\mu(\zeta), \quad \quad \quad \quad \I_\mu(\zeta):=[\wX^{\zeta}/G]
 \end{equation}
 where the disjoint union is over injective homomorphisms $\zeta$ from group schemes $\mu_r$ with $r \geq 1$. The universal object over $\I_\mu(\zeta)$ is given by the morphism
 \[
 [\wX^\zeta / (G \times \mu_r)] \to [\wX/G]
 \]
induced by the inclusion $\wX^\zeta \to \wX$ and the homomorphism $G \times \mu_r \to G$ sending $(g, h) \to g\zeta(h)$. Finally,
 each $I_\mu(\zeta)$ is a smooth closed substack of $\cX$.
\end{lemma}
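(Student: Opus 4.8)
The plan is to compute the hom stacks $\mathrm{Hom}(B\mu_r, \cX)$ directly from the presentation $\cX = [\wX/G]$ and then assemble them over $r$. First I would unwind a $T$-point of $\mathrm{Hom}(B\mu_r, \cX)$: it is a morphism $B\mu_{r,T} \to [\wX/G]$, which by the universal property of the quotient stack is a $G$-torsor $P \to B\mu_{r,T}$ together with a $G$-equivariant map $P \to \wX$. Since $B\mu_{r,T} = [T/\mu_r]$ with trivial action, such a torsor is a $\mu_r$-equivariant $G$-torsor on $T$. The key point, and the reason the answer is insensitive to the ground field, is that by Cartier duality $\mathrm{Hom}(\mu_{r,T}, G_T) \cong \mathrm{Hom}(D(G), \ZZ/r\ZZ)$, and because $D(G)$ is a constant group scheme this hom functor is represented by a constant (discrete) scheme. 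Working étale-locally on $T$ I may trivialize the underlying $G$-torsor, after which the $\mu_r$-equivariant structure is encoded by a single homomorphism $\zeta \colon \mu_{r,T} \to G_T$ and the equivariant map $P \to \wX$ becomes a map $T \to \wX$ whose image is fixed by $\mu_r$ acting through $\zeta$, i.e. a $T$-point of $\wX^\zeta$.

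Next I would impose representability. A morphism $B\mu_r \to \cX$ of Deligne-Mumford stacks is representable precisely when it is injective on automorphism groups; the automorphism group of the canonical point of $B\mu_r$ is $\mu_r$, and it maps to the stabilizer inside $G$ via $\zeta$, so representability holds if and only if $\zeta$ is injective. This cuts the indexing set down to the injective homomorphisms appearing in the statement. Because the scheme of homomorphisms is constant, $\zeta$ is locally constant on any connected test scheme, which lets me cleave the hom stacks into open-and-closed pieces indexed by a single fixed injective $\zeta$, producing the decomposition \eqref{eq:sectors} in which $\I_\mu(\zeta)$ parametrizes $\zeta$ together with a point of $\wX^\zeta$. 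Recording the residual $G$-action, coming from changing the trivialization of the underlying $G$-torsor, then identifies this substack with $[\wX^\zeta/G]$.

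For the universal object I would observe that the projection $[\wX^\zeta/(G\times\mu_r)] \to [\wX^\zeta/G] = \I_\mu(\zeta)$, where $\mu_r$ acts through $\zeta$, is a $\mu_r$-gerbe, hence a family of $B\mu_r$'s over $\I_\mu(\zeta)$; the morphism $[\wX^\zeta/(G\times\mu_r)] \to [\wX/G]$ induced by $\wX^\zeta \hookrightarrow \wX$ and $(g,h)\mapsto g\zeta(h)$ restricts on each fibre to the tautological representable map classified by that point. I would verify this represents the universal $T$-point by pulling back along an arbitrary $T \to \I_\mu(\zeta)$ and checking it recovers the morphism $B\mu_{r,T} \to \cX$ built in the first step. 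Finally, for smoothness and closedness I would invoke that $\mu_r$ is linearly reductive under the tameness hypothesis (the characteristic of $k$ is prime to $r$), so the fixed locus $\wX^\zeta$ of a diagonalizable group acting on the smooth variety $\wX$ is a smooth closed subscheme over any field; passing to quotients makes $\I_\mu(\zeta)=[\wX^\zeta/G]$ a smooth closed substack of $\cX$.

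The main obstacle I anticipate is the first step: making the descent argument for $G$-torsors on $B\mu_{r,T}$ fully precise over a non-algebraically-closed field, and in particular justifying that the underlying torsor may be trivialized and that $\zeta$ is a genuine (constant) homomorphism rather than a twisted form of one. This is exactly the subtlety the introduction flags as distinguishing the cyclotomic inertia stack from the ordinary inertia stack, and its resolution hinges entirely on $D(G)$ being a constant group scheme.
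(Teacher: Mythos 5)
Your proposal follows essentially the same route as the paper's proof: unwind a point of $\mathrm{Hom}(B\mu_r,\cX)$ over a test scheme into a $\mu_r$-equivariant $G$-torsor together with an equivariant map to $\wX$, use Cartier duality and the fact that $D(G)$ is a constant group scheme to see that the $\mu_r$-action is encoded by a homomorphism $\zeta:\mu_r\to G$ defined over $k$ and locally constant in families, identify the resulting data with points of $[\wX^\zeta/G]$, and get smoothness of $\wX^\zeta$ from linear reductivity of $\mu_r$. There is, however, one step that fails as written. You trivialize the underlying $G$-torsor \'etale-locally on $T$, but in this paper's setting $G$ is only assumed diagonalizable with $[\wX/G]$ tame Deligne--Mumford; this permits factors $\mu_{r_i}$ with $r_i$ divisible by $\mathrm{char}\,k$ (for instance $G=\mu_p$ in characteristic $p$ acting freely on $\Gm$ by translation, whose quotient is the scheme $\Gm$), and torsors under such $G$ are only fppf-locally trivial, not \'etale-locally trivial. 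The repair is easy: either trivialize fppf-locally (the resulting $\zeta$ glues because $G$ is abelian, so changing the trivialization conjugates $\zeta$ trivially), or do what the paper does and avoid trivializing altogether --- for abelian $G$ the automorphism sheaf of any $G$-torsor $P\to S$ is canonically $G_S$, so the $\mu_r$-equivariant structure is already a global homomorphism $\mu_{r,S}\to G_S$, to which Cartier duality applies directly on a connected base.

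One point where you are more careful than the paper: you justify restricting to injective $\zeta$ by checking that representability of $B\mu_r\to\cX$ amounts to injectivity on automorphism groups, which via $\zeta$ map into the stabilizer in $G$ of a point of $\wX^\zeta$. The paper's proof works with the decomposition over homomorphisms without making this identification explicit, even though the lemma indexes sectors by injective homomorphisms only; your observation supplies that justification.
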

\begin{proof}
It is enough to show that 
\begin{equation}\label{eq:sectors2}\Hom(B\mu_r, [\wX/G]) = \bigsqcup_{\zeta: \mu_r \to G} [\wX^\zeta /G].
\end{equation}
Intuitively, equality \eqref{eq:sectors2} is true because both sides are isomorphic to the stack of $\mu_r$-equivariant morphisms from $\Sp(k)$ to $[\wX/G]$. Nevertheless we write out the details.

Let $S$ be a $k$-scheme. We will show that the groupoid fibers of the two sides of \eqref{eq:sectors2} are equivalent (the equivalences will be functorial in $S$, hence glue to an equivalence of categories). 
For this it is enough to prove the case when $S$ is connected. The category of homomorphisms $B\mu_{r, S} \to [\wX/G]$ is equivalent to the category $C_1$ of pairs $(P, f: P \to \wX)$ where $P \to S$ is a $\mu_r$-equivariant $G$-torsor (where $\mu_r$ acts trivially on $S$) and $f$ is a $G$-equivariant, $\mu_r$-invariant morphism. 

The data of the $\mu_r$-action on $P$ is a homomorphism of group schemes $\mu_{r, S} \to \mathrm{Aut}(P)$, but $\mathrm{Aut}(P) = G_S,$ since $S$ is connected and $G$ is abelian. Since $\mu_r$ and $G$ are diagonalizable, the homomorphism $\mu_{r, S} \to G_S$ is equivalent to a homomorphism (in the opposite direction) of the Cartier duals, and since these Cartier dual groups are discrete and $S$ is connected the homomorphism $\mu_{r, S} \to G_S$ is pulled back from a homomorphism $\mu_r \to G$ over $k$. It follows that the category $C_1$ is equivalent to the category $C_2$ equal to the disjoint union over $\zeta: \mu_r \to G$ of the categories of pairs $(P, f: P \to \wX)$ where $P \to S$ is a $G$-torsor and $f$ is a morphism of $G$-spaces that is invariant under the $\mu_r$-action on $P$ induced by $\zeta$. This is precisely saying that the $P$-point of $\wX$ corresponding to $f$ is contained in the $\mu_r$-fixed locus. So $C_2$ is equivalent 
to the disjoint union over $\zeta: \mu_r \to G$ of the categories of pairs $(P, f: P \to \wX^\zeta)$ where $P \to S$ is a $G$-torsor and $f$ is a morphism of $G$-spaces. But this is precisely the groupoid fiber of the right hand side of \eqref{eq:sectors2}.

Finally, the fixed locus $\wX^\zeta$ is smooth since $\mu_r$ is linearly reductive (see e.g. \cite[A.8.10]{CGP}). Hence $\I_\mu(\zeta)$ is smooth.
\end{proof}

 We call the open and closed substacks $\I_{\mu}(\zeta)$ \emph{sectors}. We call the unique homomorphism $\mu_1 \to G$ the \textit{trivial homomorphism} and denote it by 1. The sector $\I_{\mu}(1)$ corresponding to the trivial homomorphism is called \emph{untwisted sector}, while the other sectors of $\I_{\mu}(\cX)$ are called \emph{twisted sectors}. Note that $\I_{\mu}(1)\cong \cX$. Note also that the sectors $I_\mu(\zeta)$ are not connected in general.

\begin{remark}
It may be surprising that the cyclotomic inertia stack has such a nice description, since we do not assume the base field $k$ is algebraically closed, and hence $\mu_r$ may not be isomorphic to $\ZZ/r\ZZ$ as group schemes. Indeed, the usual inertia stack $\I(\cX) = \cX \times_{\cX \times \cX} \cX$ is a twisted form of $\I_\mu(\cX)$. For example, $\I(B\mu_r)$ will equal the quotient of the scheme $\mu_r$ by the group scheme $\mu_r$ acting trivially, so if e.g. $k=\RR$ we get $\I(B\mu_{3, \RR}) = B\mu_{3, \RR} \sqcup B\mu_{3, \CC}$. On the other hand, $\I_{\mu}(B\mu_3, \RR) = \B\mu_{3, \RR} \sqcup  \B\mu_{3, \RR} \sqcup  \B\mu_{3, \RR}$ with components indexed by the three elements of $\ZZ/3\ZZ$.
\end{remark}

 The \textit{age} of $\I_\mu(\cX)$ at a geometric point $\bar x \to \I_\mu(\zeta)$ is defined as follows. The tangent space $T_{\cX, \bar x}$ is a $d$-dimensional representation of $\mu_r$ under the homomorphism $\zeta: \mu_r \to G$. Let  $\ZZ[x]/(x^r-1)$ be the representation ring of $\mu_r$ where $x$ corresponds to the representation of weight 1. If we write 
 \begin{equation}\label{eq:formula}
 T_{\cX, \bar x} = \sum_{i=1}^d b_i x^i
 \end{equation}
 in the representation ring of $\mu_r$, then the age at $\bar x \in \I_\mu(\zeta)$ is
 \[
		\mathrm{age}(\bar x, \zeta) := \frac{1}{r} \sum_{i = 1}^{d} b_i.
		\]
To compute the age of $\I_\mu(\cX)$ at a geometric point, we use the following exact sequence on $\wX$, dual to the exact sequence of cotangent sheaves associated to $\wX \to \cX$:
  \[
  0 \to \sO_{\wX}^{\oplus r} \to T_{\wX} \to T_{\cX}|_{\wX} \to 0.
  \]
  Here $r$ is the rank of $G$. We have used that $T_{\wX/\cX}$ is the pullback of $T_{\Sp(k)/BG}$ (since $\wX = \cX \times_{BG}\Sp(k)$ and cotangent sheaves are preserved by pullback). The tangent bundle $T_{\Sp(k)/BG}$ is given by the representation of $G$ equal to the adjoint representation, but since $G$ is abelian this is the trivial representation of rank $r$. Since $G$ acts trivially on $\sO_{\wX}^{\oplus r}$, if $\tilde x \to \wX$ is any lift of $\bar x \to \I_\mu(\zeta)$, the age can be computed using $T_{\wX, \tilde x}$ in place of $T_{\cX, \bar x}$ in \eqref{eq:formula}.

We will also make use of the \textit{second cyclotomic inertia stack} 
$\II_\mu(\cX):=\I_\mu(\cX) \times_\cX \I_\mu(\cX).$
It follows from \eqref{eq:sectors} that we have a decomposition
 \begin{equation}\label{eq:ii-sectors}\quad \II_\mu(\cX) = \coprod_{\substack{\zeta:\mu_{r_1} \to G\\\eta: \mu_{r_2} \to G}}\II_\mu(\zeta, \eta), \quad \quad \II_\mu(\zeta, \eta):= [(\wX^{\zeta} \cap \wX^{\eta})/G],
	\end{equation}
    where the disjoint union is over pairs of injective homomorphisms.
 We call the $\II_\mu(\zeta, \eta)$ the \textit{sectors} of $\II_\mu(\cX)$. There is a smallest diagonalizable subgroup scheme $H$ of $G$ containing the points $\zeta$ and $\eta$. (Indeed, $H$ is the dual of the constant group scheme equal to the pushout of the Cartier duals $D(G) \to \ZZ/r_1\ZZ$ and $D(G) \to \ZZ/{r_2}\ZZ$ of $\zeta$ and $\eta$, respectively.) Then $\wX^{\zeta} \cap \wX^{\eta}$ is precisely the fixed locus $\wX^H$ of $H$, and we have an identification
 \[
 \II_\mu(\zeta, \eta) = [\wX^H/G].
 \]
 \begin{remark}\label{rmk:ii-smooth}
 Since $H$ is linearly reductive, it follows from \cite[Prop A.8.10]{CGP} that $\wX^H$ is smooth, hence $\II_\mu(\cX)$ is also smooth.
 \end{remark}

Like the usual inertia stack, the cyclotomic inertia stack is a group object over $\cX$. What this means is that there is a product morphism
\[
\rho: \II_\mu(\cX) = \I_\mu(\cX) \times_\cX \I_\mu(\cX) \to \I_\mu(\cX)
\]
defined as follows. By Lemma \ref{lem:sectors}, an object of the left hand side over a scheme $S$ is a pair of injective homomorphisms $\zeta, \eta: \mu_{r_1}, \mu_{r_2} \to G$ together with a morphism $f: S \to [(\wX^\zeta \cap \wX^\eta) / G].$

\begin{definition}\label{def:zetaeta}
The \emph{product} of the homomorphisms $\zeta$ and $\eta$ is the coimage of the composition
\[
\mu_{m} \to \mu_{r_1} \times \mu_{r_2} \xrightarrow{(\zeta, \eta)} G \times G \to G
\]
where $m=\mathrm{lcm}(r_1, r_2)$, the first map is Cartier dual to the morphism $\ZZ/r_1\ZZ \times \ZZ/r_2\ZZ$ sending $(x,y)$ to $((m/r_1)x, (m/r_2)y)$, and the last map is the product. In particular, the product of $\zeta$ and $\eta$ is an injective homomorphism $\zeta \eta: \mu_{r_3} \to G$ for some positive integer $r_3$ dividing $m$. 
\end{definition}

\begin{remark}
When the field $k$ is algebraically closed, a homomorphism $\mu_r \to G$ is the same as an element $\zeta \in G$. If $\zeta_i$ is the element of $G$ corresponding to the homomorphism $\mu_{r_i} \to G$ for $i=1, 2$, then the product element $\zeta_1\zeta_2 \in G$ is the element corresponding to the product homomorphism constructed in Definition \ref{def:zetaeta}.
\end{remark}

The fact that $f: S \to \cX$ factors through $[(\wX^\zeta \cap \wX^\eta)/ G]$ means that the corresponding $G$-torsor on $S$ maps into $\wX^\zeta \cap \wX^\eta \subset \wX$. It follows from the construction of $\zeta \eta$ that $\wX^\zeta \cap \wX^\eta$ is contained in $\wX^{\zeta \eta}$, so the homomorphism $\zeta \eta$ and the morphism $f$ define an object of $\I_\mu(\cX)$. We define $\rho$ to send the object of $\II_\mu(\cX)$ defined by $(\zeta, \eta, f)$ to the object of $\I_\mu(\cX)$ defined by $(\zeta \eta, f)$. This rule extends naturally to a functor. We note that concretely, $\rho$ is the inclusion
\[
\rho: [\wX^H/G] \to [\wX^{\zeta \eta}/G]
\]
where $H$ is the subgroup of $G$ generated by the images of $\zeta$ and $\eta$.

\subsection{The ring $A^*_{st}(\cX)$}

 Following \cite{AGV02}, the \textit{stringy chow ring} has the underlying graded $\ZZ$-module
	\begin{equation}\label{eq:module}
	A_{st}^*(\mathcal X) = \bigoplus_i A^{* - \operatorname{age}(\I_\mu(\cX)_{i})}(\I_\mu(\cX)_i)
	\end{equation}
 where the summation is over all connected components of \(\I_\mu(\cX)\). (We note that age is a locally constant function on $\I_\mu(\cX)$, so we may speak of the age of a connected component.) The ring structure on $A^*_{st}(\cX)$ is defined using an auxiliary stack $\cK(\cX)$ and morphisms $ev_i:\cK(\cX) \to \cX$, which we now define.

Given $\br \in \ZZ^3_{\geq 1}$ and a $k$-algebra $R$, let $\cC_{\br, R}$ denote the stack with coarse space $\PP^1_R$ given by taking root stacks of orders $r_1, r_2,$ and $r_3$ at the three $R$-points $0, 1,$ and $\infty$, respectively. 
Define $\cK(\cX)$ to be the set of genus-zero, degree-zero twisted stable maps to $\cX$ with three markings, so an object of $\cK(\cX)$ over a $k$-scheme $S$ is a pair $(\cC \to S, f: \cC \to \cX)$ where $\cC \to S$ is a flat proper morphism with fiber over a geometric point $\bar s \to S$ equal to $\cC_{\br_{\bar s}}$ for some $\br_{\bar s}$ , the morphism $f: \cC \to \cX$ is representable,  and the map of coarse spaces induced by $f$ contracts the coarse space of every geometric fiber of $\cC$. Let
\begin{equation}\label{eq:universal}
\begin{tikzcd}
\cC \arrow[d, "\pi"] \arrow[r, "f"] & \cX\\
\cK(\cX)
\end{tikzcd}
\end{equation}
denote the universal twisted stable map over $\cK(\cX)$. 

\begin{lemma}\label{lem:curve}
If $S$ is a connected scheme over $k$ and $f: \cC \to \cX$ is an object of $\cK(\cX)$ over $S$, then $\cC$ is a product $S \times_{ k} \cC_{\br, k}$ for some $\br \in \ZZ^3_{\geq 1}.$
\end{lemma}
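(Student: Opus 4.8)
The plan is to descend to the coarse space, trivialize it using the three marked points, and then recover $\cC$ as an iterated root stack whose defining data is pulled back from $\Sp(k)$. First I would analyze the coarse space $C \to S$ of $\cC \to S$. Since the formation of coarse spaces commutes with the flat base change along a geometric point $\bar s \to S$, and the coarse space of $\cC_{\br_{\bar s}}$ is $\PP^1_{\bar s}$, the morphism $C \to S$ is proper and flat with every geometric fiber isomorphic to $\PP^1$. The stacky locus of $\cC \to S$ (equivalently, the three markings) consists of three pairwise disjoint gerbes over $S$, whose coarse images are three pairwise disjoint sections $\sigma_1,\sigma_2,\sigma_3\colon S \to C$ meeting each geometric fiber in the points $0,1,\infty$. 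Thus $(C \to S;\ \sigma_1,\sigma_2,\sigma_3)$ is a family of $3$-pointed genus-zero stable curves, i.e. a morphism $S \to \overline{\mathcal{M}}_{0,3}$. Because $\overline{\mathcal{M}}_{0,3} = \Sp(k)$, this morphism is constant and the family is pulled back from the universal one: there is a canonical isomorphism $C \cong S \times_k \PP^1_k$ carrying $\sigma_1,\sigma_2,\sigma_3$ to the constant sections $0,1,\infty$.

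Next I would pin down the stacky structure. The geometric fibers of $\cC \to S$ are root stacks of smooth curves, so $\cC \to S$ has no nodes and its only stacky locus is along the markings. The $i$-th marking is a gerbe over $S$ banded by $\mu_{r_i}$, and $r_i$ is the rank of this finite flat band; since ranks of finite flat group schemes are locally constant and $S$ is connected, each $r_i$ is constant, producing a single $\br = (r_1,r_2,r_3) \in \ZZ^3_{\geq 1}$. By the structure theory of twisted curves (Abramovich--Vistoli, Olsson), a nodeless twisted curve is canonically the iterated root stack of its coarse space along the marked sections, so
\[
\cC \;\cong\; \sqrt[r_1]{(C,\sigma_1(S))}\times_C \sqrt[r_2]{(C,\sigma_2(S))}\times_C \sqrt[r_3]{(C,\sigma_3(S))}.
\]
Under the identification $C \cong S\times_k \PP^1_k$ the three sections and the orders $r_i$ are all pulled back from $\PP^1_k$, and root stacks commute with the flat base change $S \to \Sp(k)$; hence $\cC \cong S\times_k \cC_{\br,k}$, as claimed.

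The main obstacle lies in the control of the stacky data rather than the coarse geometry: constancy of the orders is a soft consequence of connectedness, and trivializing the $\PP^1$-bundle is routine once the three sections are in hand. The delicate point is to rule out any additional twisting along the markings, since a priori the $i$-th marking could be a nontrivial $\mu_{r_i}$-gerbe over $S$ whose class is geometrically trivial but globally nonzero. This is exactly what the \emph{canonical} root-stack form of a nodeless twisted curve excludes: once one knows $\cC$ is the canonical iterated root stack along $\sigma_i(S)$ (rather than some independently twisted form), there is no free gerbe data, and the commutation with base change forces the product structure. I would therefore take care to invoke the global structure theorem for twisted curves, not merely its \'etale-local version.
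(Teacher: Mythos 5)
Your proof is correct and follows essentially the same route as the paper's: both reduce the coarse space to the constant family using $\cM_{0,3} = \Sp(k)$ and then invoke Olsson's structure theorem to realize $\cC$ as the iterated root stack of its coarse space along the marked sections, whose defining data is pulled back from $\PP^1_k$, so $\cC \cong S \times_k \cC_{\br, k}$. The only cosmetic difference is how the potential gerbe twisting is excluded: the paper notes that the normal bundles of the constant sections are trivial (so the marking gerbes are trivial), while you appeal to canonicity of the root-stack description together with compatibility of root stacks with base change --- both valid.
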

\begin{proof}
Since $\cM_{0, 3}$ is a point, the coarse moduli space of $\cC$ is pulled back from $\PP^1_k$ with three markings at $s_1=0, s_2=1$, and $s_3=\infty$. By \cite{olsson07} the curve $\cC$  is obtained by taking an $r_i$th root at $s_i$ for $i=1,2,3$. Since the normal bundle to each $s_i$ is trivial, the gerbes in $\cC$ are also trivial (i.e. isomorphic to $B\mu_{r_i, S}$), and $\cC$ is pulled back from a curve $\cC_k$. 
\end{proof}

Let $\cK \subset \cK(\cX)$ be a connected component and let $\cC_{\cK}$ be the restriction of the universal curve. 
By Lemma \ref{lem:curve} we get that $\cC_{\cK} $ is $\cK \times_k \cC_{\br, k}$ for some $\br$. 
It follows that the gerbe markings of $\cC_{\cK}$ are trivial. In fact there are \textit{canonical} trivializations $B\mu_{r_j} \to \cC_{\cK}$ as explained in the next remark.

\begin{remark}\label{rem:trivial}
As a root stack, $\cC_{\br, k}$ has canonical trivializations of the gerbe markings. These arise as follows. 
Since $\cC_{\br, k}$ is constructed as a root stack, we have canonical line bundles $\cT_i$ and isomorphisms $\tau_i: \cT_i^{\otimes r_i} \to \sO_{\PP^1_k}(1)$ for $i=1, 2, 3$. The canonical trivialization $B\mu_{r_i} \to \cC_{\cK}$ is the morphism with the property that $\cT_i$ pulls back to the line bundle on $B\mu_{r_i}$ induced by the standard character. 
\end{remark}
Passing now from the connected component $\cK$ to all of $\cK(\cX)$, it makes sense to define
the $j$th evaluation map $ev_j: \cK(\cX) \to \I_\mu(\cX)$ to be given by restricting $f$ to the $j$th trivialized gerbe, yielding $B\mu_{r_j, \cK(\cX)} \to \cX$, where now $r_j$ is a locally constant function on $\cK(\cX)$.

We are now ready to give the definition of the product on $A^*_{st}(\cX)$ as it was constructed in \cite{AGV02}. The product of $\gamma_1$ and $\gamma_2 \in A^*(\I_\mu(\cX))$ is given by
\begin{equation}\label{eq:starprod}
\gamma_1 \star \gamma_2 = \overline{ev}_{3, *}(ev_1^* \gamma_1 \cdot ev_2^*\gamma_2 \cdot c_{top}(R^1\pi_*f^*T\cX)),
\end{equation}
 where $\pi$ and $f$ are as in \eqref{eq:universal} and $\overline{ev}_{3, *}: \cK(\cX) \to \I_\mu(\cX)$ is the composition of $ev_3$ with the involution on $\I_\mu(\cX)$ induced by the involution on the gerbes $B\mu_r$ that arises from the inverse map $\mu_r \to \mu_r$. The sheaf $R^1\pi_*f^*T\cX$ is called the \textit{obstruction sheaf} and it is in fact locally free; the element $c_{top}(R^1\pi_*f^*T\cX)$ is its top chern class.

\subsection{Computation of $\cK(\cX)$ and its universal family} The goal of this section is to give an explicit description of $\cK(\cX)$ and its universal family \eqref{eq:universal}. 

\begin{theorem}\label{thm:moduli}
The moduli space $\cK(\cX)$ is isomorphic to $\II_\mu(\cX)$. On a sector $\II_\mu(\zeta, \eta)$, the universal curve 
and morphism to $\cX$ are given explicitly by
\[
\begin{tikzcd}
{[\wX^H / G]} \times {[C_{\zeta, \eta}/H]} \arrow[r, "f"] \arrow[d, "\pi"] & {[\wX/G]}\\
{[\wX^H/G]}
\end{tikzcd}
\]
 where $H \subset G$ is the subgroup generated by $\zeta$ and $\eta$ and $C_{\zeta, \eta}$ is some curve over $k$. The map $\pi$ is projection to the first factor while the map $f$ is induced by the projection and inclusion $\wX^H \times C_{\zeta, \eta} \to \wX^H \to \wX$ and the product homomorphism $G \times H \to G$. The maps $ev_1, ev_2, \overline{ev}_3: [\wX^H/G] \to \I_\mu(\cX)$ are the natural inclusions of $[\wX^H/G]$ into the sectors $\I_\mu(\zeta)$, $\I_\mu(\eta)$, and $\I_\mu(\zeta \eta)$, respectively.
\end{theorem}

Our proof of this theorem will occupy the rest of the section.

\begin{remark}
We note that our description of the universal family differs from that in \cite[Sec 6]{BCS}: there, the morphism $\pi$ is induced by the projection $\wX^H \times C_{\zeta, \eta} \to \wX^H$ and the product homomorphism $G \times H \to G$, while $f$ is induced by the projection and inclusion $\wX^H \times C_{\zeta, \eta} \to \wX^H \to \wX$ and the projection homomorphism $G \times H \to G$. The two families are isomorphic by the automorphism of
\[
{[\wX^H / G]} \times {[C_{\zeta, \eta}/H]}
\]
induced by the group involution $G \times H \to G \times H$ sending $(g, h)$ to $(gh, h^{-1})$.
\end{remark}

The maps $ev_1, ev_2$ define a morphism 
\[
\Phi: \cK(\cX) \to \I_\mu(\cX) \times_{\cX} \I_\mu(\cX) = \II_\mu(\cX). 
\]
Most of the work to prove Theorem \ref{thm:moduli} comes in the proof of the following.

\begin{proposition}\label{prop:K-II-iso}
The morphism $\Phi$ is an isomorphism.
\end{proposition}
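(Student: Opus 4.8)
The plan is to show $\Phi\colon \cK(\cX)\to \II_\mu(\cX)$ is an isomorphism by constructing an explicit inverse, using the global quotient descriptions already established. Recall that $\Phi$ sends a twisted stable map $(\cC\to S, f\colon \cC\to\cX)$ to the pair of evaluations $(ev_1 f, ev_2 f)$ at the first two gerbe markings. To go backwards, I would start from an $S$-point of a sector $\II_\mu(\zeta,\eta)=[\wX^H/G]$, which by Lemma~\ref{lem:sectors} and the decomposition \eqref{eq:ii-sectors} is the data of a pair of injective homomorphisms $\zeta\colon\mu_{r_1}\to G$, $\eta\colon\mu_{r_2}\to G$ together with a $G$-torsor $P\to S$ mapping into $\wX^H$, where $H\subset G$ is generated by $\zeta$ and $\eta$. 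From this data I want to manufacture a canonical object of $\cK(\cX)$ whose first two evaluations recover $(\zeta,\eta,P)$.

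The key step is the construction of the curve. Over a connected base, Lemma~\ref{lem:curve} tells us the universal curve must be a product $S\times_k\cC_{\br,k}$ where $\cC_{\br,k}$ is the $\PP^1$ with root stacks of orders $r_1,r_2,r_3$ at $0,1,\infty$; here the monodromy data at $0$ and $1$ are prescribed by $\zeta$ and $\eta$, and the order $r_3$ (and third homomorphism) is forced to be $\zeta\eta$ by the balancing/compatibility condition at $\infty$ that any genus-zero twisted stable map satisfies. Thus I expect $\cC_{\br,k}=[C_{\zeta,\eta}/H]$ for a suitable $H$-cover $C_{\zeta,\eta}$ of $\PP^1_k$. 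The morphism $f$ to $\cX=[\wX/G]$ is then built from the map into $\wX^H$ and the product homomorphism $G\times H\to G$, exactly as in the statement of Theorem~\ref{thm:moduli}. I would verify that this assignment is functorial in $S$ and that $\Phi$ applied to it returns the original triple, and conversely that any twisted stable map arises this way, so the two functors are mutually inverse.

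The main obstacle is showing that the $H$-cover $C_{\zeta,\eta}\to\PP^1_k$ exists and is \emph{unique} once the monodromy at two of the three branch points is specified — precisely the point where working over a non-algebraically-closed $k$ makes the classical story delicate. Over $\CC$ this is the statement that there is a unique cover of $\PP^1$ ramified over three points with prescribed monodromy at two branch points, and the introduction flags that the analog over an arbitrary field of characteristic prime to $|H|$ is the content of Corollary~\ref{cor:fantechi-goettsche}. So I would isolate this existence-and-uniqueness of the ramified $G$-cover as the crux and invoke \ref{cor:fantechi-goettsche} to supply it; given that, the identification of $\cC_{\br,k}$ as a fixed root stack and the bijectivity of $\Phi$ on groupoid fibers become a matter of unwinding the equivalences of categories from Lemma~\ref{lem:sectors}.

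A secondary point to check carefully is that the third homomorphism really is the product $\zeta\eta$ of Definition~\ref{def:zetaeta}: the monodromies around the three marked points of $\PP^1$ must multiply to the identity, and matching this constraint with the Cartier-dual description of $\zeta\eta$ is what pins down $r_3$ and the identification $\overline{ev}_3$ with the inclusion into $\I_\mu(\zeta\eta)$. I would confirm this compatibility directly from the root-stack trivializations of Remark~\ref{rem:trivial}, which is what makes the evaluation maps $ev_1,ev_2,\overline{ev}_3$ the natural inclusions asserted in Theorem~\ref{thm:moduli}.
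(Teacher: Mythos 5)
Your proposal has a genuine circularity problem. You identify the crux correctly—the existence and uniqueness of the $H$-cover of $\PP^1$ with prescribed monodromy at two of three branch points—but you then propose to \emph{invoke} Corollary \ref{cor:fantechi-goettsche} to supply it. In the paper, that corollary is deduced \emph{from} Proposition \ref{prop:K-II-iso} (by setting $S = \Sp(k)$ in \eqref{eq:local}); it is not an available external input but is precisely (a special case of) the statement to be proved. The actual content of the paper's proof is a direct verification that $\Phi$ induces an equivalence on groupoid fibers over every affine $S$, reduced via finiteness of the diagonal and Artin approximation to the case of a Noetherian complete local ring $R$ with separably closed residue field, and then split into three parts: surjectivity on isomorphism classes (Lemma \ref{lem:existence}, an explicit root-stack/line-bundle construction over $B\mu_a$), injectivity on isomorphism classes (Lemma \ref{lem:agree}, a deformation-theoretic induction over square-zero thickenings $R/\fm^{n+1} \to R/\fm^n$ followed by a gluing argument), and bijectivity on automorphism groups. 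None of this work appears in your proposal; it is exactly what would be needed to make your cited ``crux'' legitimate.

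There is a second, related gap: even granting the corollary, it only produces the cover over the field $k$ (or over geometric points), whereas the proposition is a statement about stacks, i.e., about families over arbitrary bases. Your plan constructs $\Psi$ on $S$-points and asserts that functoriality plus ``unwinding equivalences'' shows $\Phi$ and $\Psi$ are mutually inverse; but verifying that \emph{every} twisted stable map over $S$ arises from $\Psi$, and that $\Phi$ is bijective on automorphisms, requires the relative versions of existence and uniqueness over Artinian and complete local bases—Lemma \ref{lem:curve} only identifies the curve as a product, not the map. (This is why, in the paper, the construction of $\Psi$ and the check of one composition $\Phi \circ \Psi = \mathrm{id}$ suffice only \emph{after} the proposition is already known, in the proof of Theorem \ref{thm:moduli}.) A minor further point: the third monodromy is $(\zeta\eta)^{-1}$, not $\zeta\eta$ (Lemma \ref{lem:hom}); it is the twisted evaluation $\overline{ev}_3$, which incorporates the inversion involution, that lands in $\I_\mu(\zeta\eta)$.
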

To show that $\Phi$ is an isomorphism, it suffices to show that $\Phi$ induces an equivalence of groupoid fibers
\begin{equation}\label{eq:local}
\Phi_S: \cK(\cX)(S) \to \II_\mu(\cX)(S)
\end{equation}
for all affine schemes $S = \Sp(R)$. In fact, since $\cK$ and $\II_\mu(\cX)$ are stacks locally of finite presentation, it suffices to show this when $R$ is a Noetherian strictly Henselian local $k$-algebra. We can even assume that $R$ is a Noetherian complete local ring with separably closed residue field: The ``fully faithful'' part of this reduction uses that the Isom sheaves of objects over $S$ are represented by finite $S$-schemes (since $\cK$ and $\II_\mu(\cX)$ have finite diagonal) and a morphism of such schemes can be shown to be an isomorphism after the faithfully flat basechange given by completing the local ring. The ``essentially surjective'' part follows from Artin approximation.  
From now on we assume $R$ is a Noetherian complete local ring with separably closed residue field (in particular, such a ring is strictly Henselian).

We may work locally
on $\cX$ and thus by \cite[Thm 11.3.1]{Olssonbook} assume $\cX = [W/H]$ for some smooth variety $W$ and
group scheme $H$ equal to the stabilizer group scheme at a $K$-point of $\cX$ where $K$ is separably closed. In particular, $H$ is a finite diagonalizable tame group scheme defined over some field containing $k$, so we may write $H = \prod_{i=1}^n \mu_{a_i}$ for some positive integers $N$ and $a_i$ such that each $a_i$ is prime to the characteristic of $k$.
Since $S$ is strictly Henselian we may assume moreover that the map $S \to \cX$ induced by one/all of the trivialized gerbe markings admits a lift $S \to W$.

To show that $\Phi_S$ is an equivalence, we will show (1) it is surjective on isomorphism classes of objects, (2) is is injective on isomorphism classes of objects, and (3) induces a bijection of automorphisms of any given object. Isomorphism classes of the left hand side of \eqref{eq:local} are isomorphism classes of morphisms $\cC_{\br, R} \to [W/H]$. Isomorphism classes of the right hand side of \eqref{eq:local} are pairs of isomorphism classes of morphisms $B\mu_{r_1, R},\ B\mu_{r_2, R} \to [W/H]$ such that the restrictions to morphisms $\Sp(R) \to [W/H]$ are isomorphic.

\subsubsection{$\Phi_S$ is surjective on isomorphism classes}\label{sec:surj}
In this step, we show that given two morphisms $B\mu_{r_1, R}, \ B\mu_{r_2, R} \to [W/H]$ with isomorphic restrictions $\Sp(R) \to [W/H]$, there exists an integer $r_3$ and a representable morphism $\cC_{\br, R} \to [W/H]$, where $\br = (r_1, r_2, r_3)$, whose restrictions to the first two marked gerbes are isomorphic to the morphisms given. 

Since $R$ is strictly Henselian local, for $i=1, 2$ the morphisms $B{\mu_{r_i, R}} \to [W/H]$ arise from homomorphisms $\zeta, \eta: \mu_{r_i} \to H$ and a map $\Sp(R) \to W^{H'}$, where $H' \subset H$ is the diagonalizable subgroup scheme generated by $\zeta$ and $\eta$. (The proof of this assertion is similar to the proof of Lemma \ref{lem:sectors}, noting that the principal bundle $P$ in that proof will have a section.) It follows that the two morphisms $B\mu_{r_i, R} \to [W/H]$ factor through a morphism $BH'_R \to [W/H]$. Since $BH'_R = BH' \times \Sp(R)$ and the factors $B\mu_{r_i, R} \to BH'_R$ are morphisms of stacks over $R$, it is enough to prove surjectivity in the case when $[W/H] = BH$ for a finite diagonalizable tame group scheme $H$.

We can moreover reduce to the case when $H = \mu_a$ for some positive integer $a$ as follows. Write $H = \prod_{i=1}^N \mu_{a_i}$ and assume we can construct a stable map $\cC_{\br_i} \to B\mu_{a_i}$ for each $i$. Let $r_3$ be the least common multiple of all the third coordinates of the $\br_i$, and let $\cC_\br \to \cC_{\br_i}$ be the natural map given by a partial rigidification of the third marking.\footnote{
Let $\br = (r_1, r_2, r_3)$ and let $\br' = (r_1, r_2, r_3')$ with $r_3' = r_3q$ for some positive integer $q$. The rigidification map mentioned here is explicitly constructed as follows. Let $(\cT, s, \phi: \cT^{\otimes r_3'} \xrightarrow{\sim} \sO(1))$ be the tautological root on $\cC_{\br'}$, so $\phi$ is an isomorphism sending $s^{r_3'}$ to the section of $\sO(1)$ vanishing at the third marking. Then the map $\cC_{\br'} \to \cC_{\br}$ is induced by the triple $(\cT^{\otimes q}, s^q, \phi: (\cT^{\otimes q})^{\otimes r_3} \xrightarrow{\sim} \sO(1))$ on $\cC_{\br'}.$
}

The morphisms $\cC_\br \to B\mu_{a_i}$ for each $i$ induce a morphism $\cC_\br \to BH$, and taking the relative coarse space of this morphism produces the required stable map.

It remains to prove the following lemma.

\begin{lemma}\label{lem:existence}
Let $B\mu_{r_1, R}, B\mu_{r_2, R} \to B\mu_a$ be two morphisms with isomorphic restrictions $\Sp(R) \to B\mu_a$. Then there exists an integer $r_3$ and a morphism $\cC_{\br, R} \to B\mu_a$ whose restrictions to the first two marked gerbes are isomorphic to the morphisms given.
\end{lemma}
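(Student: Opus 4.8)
The plan is to translate the statement into line bundles and then carry out an explicit construction. Since $\mathrm{char}\,k$ is prime to $a$, I would first invoke the standard Kummer description of the classifying stack: a morphism $T \to B\mu_a$ is the same datum as a pair $(L,\phi)$ consisting of a line bundle $L$ on $T$ together with a trivialization $\phi\colon L^{\otimes a}\xrightarrow{\sim}\OO_T$, the associated $\mu_a$-torsor being the scheme of $a$-th roots of $\phi$. Applying this with $T = B\mu_{r_i,R}$, a morphism $B\mu_{r_i,R}\to B\mu_a$ is recorded, up to isomorphism, by the character $c_i\in\ZZ/r_i=\Hom(\mu_{r_i},\Gm)$ by which $\mu_{r_i}$ acts on the fibre of $L$ at the gerbe; because the torsor lands in $\mu_a$ one automatically has $ac_i\equiv 0\pmod{r_i}$, and since over the strictly Henselian local ring $R$ the isomorphism classes of such morphisms are $\Hom(\mu_{r_i},\mu_a)$, which injects into $\ZZ/r_i$, the class $c_i$ determines the morphism up to isomorphism. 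Thus the two given morphisms $\zeta,\eta$ are encoded by characters $c_1\in\ZZ/r_1$ and $c_2\in\ZZ/r_2$, and it suffices to produce an $r_3$ together with a pair $(L,\phi)$ on $\cC_{\br,R}$ whose fibre characters at the first two gerbes are $c_1$ and $c_2$.

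For the construction I would use the tautological roots of Remark \ref{rem:trivial}: the line bundles $\cT_1,\cT_2,\cT_3$ on $\cC_{\br,R}$ with $\tau_i\colon\cT_i^{\otimes r_i}\xrightarrow{\sim}\OO(1)$, whose restriction to the $j$-th gerbe is the weight-$\delta_{ij}$ character. I then set
\[
L := \cT_1^{\otimes c_1}\otimes\cT_2^{\otimes c_2}\otimes\cT_3^{\otimes c_3}\otimes\OO(e),
\]
where $c_3,e$ are to be chosen and $\OO(e)$ is the pullback of $\OO_{\PP^1_R}(e)$ (which restricts trivially to every gerbe), so that $L$ has fibre characters $c_1,c_2$ at the first two gerbes regardless of $c_3,e$. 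Writing $ac_i=r_id_i$ for $i=1,2$ and applying $\tau_i$ gives $\cT_i^{\otimes ac_i}\cong\OO(d_i)$, whence
\[
L^{\otimes a}\cong\OO\big(d_1+d_2+d_3+ae\big),\qquad d_3:=ac_3/r_3,
\]
provided $r_3\mid ac_3$. The task is to choose $r_3$ and $c_3$ with $d_3\equiv-(d_1+d_2)\pmod a$, after which a suitable $e$ forces the total degree to $0$, so that $L^{\otimes a}\cong\OO$ and $\phi$ is the resulting trivialization assembled from the $\tau_i$. Concretely one may take $r_3=a$ and $c_3\in\ZZ/a$ the residue of $-(d_1+d_2)$, giving $d_3=c_3$ and the required congruence; a more economical choice is $r_3=a/\gcd(a,d_1+d_2)$, the order of the ``monodromy at infinity'' matching Definition \ref{def:zetaeta}, but any working $r_3$ suffices for existence.

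The morphism $\cC_{\br,R}\to B\mu_a$ attached to $(L,\phi)$ then restricts on the $i$-th gerbe ($i=1,2$) to a morphism with fibre character $c_i$, hence to one isomorphic to the given $\zeta$, resp.\ $\eta$, as required. The step I expect to be the main obstacle is the need to work over a general, possibly non-closed field: one cannot fix an identification $\mu_a\cong\ZZ/a$ and manipulate ``monodromy elements'' naively, and the whole point of phrasing the construction through the characters $c_i\in\ZZ/r_i$ and the integers $d_i=ac_i/r_i$ is that this datum is manifestly field-independent. The only genuine content is then the choice $d_3\equiv-(d_1+d_2)\pmod a$, which is the algebraic shadow of the classical fact that a cover of $\PP^1$ branched over three points is determined by, and exists for, arbitrary prescribed monodromy at two of them. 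Finally I would note that no representability of $\zeta,\eta$ is needed at this stage, consistent with the reduction preceding the lemma, where representability of the final stable map is arranged separately.
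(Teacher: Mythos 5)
Your proof is correct and follows essentially the same route as the paper's: both encode maps to $B\mu_a$ as pairs (line bundle, trivialization of its $a$-th power), build the line bundle out of the tautological root bundles $\cT_i$ twisted by a pullback from $\PP^1_R$, choose the exponent at the third gerbe so that the $a$-th power has degree zero, and use that $R$ is strictly Henselian to conclude that matching characters at the first two gerbes forces the restricted morphisms to be isomorphic to the given ones. The only divergence is your choice $r_3=a$: the paper instead takes $b_3/r_3$ in lowest terms, which makes $r_3$ minimal and the constructed morphism representable at the third gerbe (Remark \ref{rmk:rep}), a feature used later in Lemma \ref{lem:hom}; for the lemma as literally stated, your choice suffices.
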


We note that proving Lemma \ref{lem:existence} is equivalent to constructing a certain cyclic cover of $\PP^1_R$. There are many ways to do this; our approach uses the theory of root stacks.

On that note, to prove the lemma, we will use the fact that for any stack $\cY$, the category of morphisms $\cY \to B\mu_a$ is the category of pairs $(\cL, \varphi)$ on $\cY$ where $\cL$ is a line bundle on $\cY$ and $\varphi: \cL^{\otimes a} \to \cO_{\cY}$ is an isomorphism. Morphisms of pairs are isomorphisms of line bundles commuting with the trivializations of $a$th powers.

\begin{example}\label{ex:torsors}
Since $R$ is complete local with separably closed residue field, every $\mu_a$-torsor on $\Sp(R)$ and $\AA^1_{R}$ is trivial. For $\Sp(R)$ this is simply because $\mu_{a, R} \to \Sp(R)$ is \'etale and $\Sp(R)$ has no nontrivial \'etale covers. For $\AA^1_{R}$, we first note that it is true when $R$ is a separably closed field. 
From here the result follows from invariance of the \'etale site under infinitesimal thickenings.
\end{example}

\begin{proof}[Proof of Lemma \ref{lem:existence}]
For $i=1,2$ let $(\cL_i, \varphi_i)$ be the line bundle on $B\mu_{r_i, R}$ and isomorphism of its $a$th power defining the map $B\mu_{r_i, R} \to B\mu_a$. We will construct an integer $r_3$ and a line bundle $\cL$ on the corresponding $\cC_{\br, R}$ whose $a$-th power is trivial and whose restriction to the gerbes $B\mu_{r_1, R}$ and $B\mu_{r_2, R}$ is isomorphic to $\cL_1$ and $\cL_2$, respectively. This is sufficient because given any trivialization $\varphi: \cL^{\otimes a} \to \sO_{\cC_{\br, R}}$, its restriction to a trivialiation of $\cL_i^{\otimes a} \simeq \cL^{\otimes a}|_{B\mu_{r_i, R}}$ is isomorphic to $\varphi_i$. The key point is that since $R$ is strictly Henselian local, the difference between $\varphi_i$ and $\varphi$ will have an $a$th root.

As in Remark \ref{rem:trivial}, let $(\cT_i, \tau_i)$ for $i=1,2,3$ be the tautological line bundles and isomorphisms on any $\cC_{\br, R}$, so $\tau_i:\cT_i^{\otimes r_i} \to \cO_{\PP^1_R}(1)$ is an isomorphism (where the target bundle is pulled back from the coarse space). Since $R$ is strictly Henselian local, any line bundle on $B\mu_{r_i, R}$ is determined by a character of $\mu_{r_i}$, and hence $i=1,2$ we may write $\cL_i = \cT_i|_{B\mu_{r_i, R}}^{\otimes b_i}$ for a unique integer $b_i$ in $[0, r_i)$ (and this is independent of the value of $r_3$). Observe that $b_1/r_1 + b_2/r_2$ is in $[0, 2)$. Let $b_3$ and $r_3$ be relatively prime integers such that $b_3/r_3$ is the unique rational number in $[0, 1)$ satisfying
\[
b_1/r_1+b_2/r_2 + b_3/r_3 \in \ZZ.
\]
Multiplying this equation through by $a$, we see that since $r_1$ and $r_2$ divide $a$ and the gcd of $r_3$ and $b_3$ is 1, the integer $r_3$ also divides $a$. We choose this value of $r_3$ to define our curve $\cC_{\br, R}$, and we set
\[
\cL := \sO_{\PP^1}(-b_1/r_1-b_2/r_2 - b_3/r_3) \otimes \cT_1^{b_1} \otimes \cT_2^{\otimes b_2} \otimes \cT_3^{\otimes b_3}.
\]
One checks that $\cL^{\otimes a}$ is trivial since $\tau_1^{ab_1/r_1} \otimes \tau_2^{ab_2/r_2} \otimes \tau_3^{ab_3/r_3}$ provides an isomorphism to $\sO_{\cC_{\br, R}}$. It also follows from the construction of $\cL$ that $\cL|_{B\mu_{r_i}}$ is isomorphic to $\cL_i$ for $i=1,2$.
\end{proof}

\begin{remark}\label{rmk:rep}
The explicit morphism $\cC_{\br, R} \to B\mu_a$ constructed in the proof of Lemma \ref{lem:existence} is in fact representable.
\end{remark}

\subsubsection{$\Phi_S$ is injective on isomorphism classes}

Suppose $f_1: \cC_{\br_1, R} \to [W/H]$ and $f_2: \cC_{\br_2, R} \to [W/H]$ are two stable maps whose restrictions to the gerbes $B\mu_{r_i, R}$ are 2-isomorphic for $i=1, 2$. In particular, $\br_1$ and $\br_2$ are equal in the first two coordinates. In this step we show that $\br_1 = \br_2$ and $f_1$ and $f_2$ are 2-isomorphic. A key ingredient is the following lemma.

\begin{lemma}\label{lem:agree}
Suppose we are given $\br \in \ZZ^3_{\geq 1}$ and two morphisms $f_1, f_2: \cC_{\br, R} \to [W/H]$ such that the restrictions $f_1, f_2: B\mu_{r_i, R} \to [W/H]$ are 2-isomorphic for $i=1,2$. Then $f_1$ and $f_2$ are 2-isomorphic.
\end{lemma}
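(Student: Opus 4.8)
The plan is to describe a morphism $f : \cC_{\br, R} \to [W/H]$ as a pair $(P, \tilde f)$, where $P \to \cC_{\br, R}$ is an $H$-torsor and $\tilde f : P \to W$ is an $H$-equivariant morphism, and to show that each of the two pieces of data is rigidified by the restrictions to the first two gerbes. A $2$-isomorphism $f_1 \Rightarrow f_2$ is precisely an isomorphism of torsors $\psi : P_1 \to P_2$ with $\tilde f_2 \circ \psi = \tilde f_1$, so I will first produce a suitable $\psi$ and then check it intertwines the equivariant maps. Throughout I use that $f_1, f_2$ are stable maps, so the induced map of coarse spaces $\PP^1_R \to \overline{[W/H]}$ is constant on fibers; this hypothesis is essential, since without it the claim already fails for $H$ trivial, where two maps $\PP^1 \to W$ need not agree merely because they agree at two points. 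Since $H = \prod_i \mu_{a_i}$, for the torsor $P$ it suffices to treat a single factor $\mu_a$, i.e. a pair $(\cL, \varphi)$ with $\cL$ a line bundle and $\varphi : \cL^{\otimes a} \xrightarrow{\sim} \sO$.

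First I would establish rigidity of the torsor. Writing $\cL = \sO_{\PP^1}(m) \otimes \cT_1^{\otimes b_1} \otimes \cT_2^{\otimes b_2} \otimes \cT_3^{\otimes b_3}$ with $b_i \in [0, r_i)$, the restriction of $\cL$ to the gerbe $B\mu_{r_i, R}$ records exactly $b_i$ for $i = 1, 2$, since $\cT_j$ restricts trivially for $j \neq i$ and $\sO(m)$ restricts trivially. Because $f_1$ and $f_2$ have $2$-isomorphic restrictions at markings $1$ and $2$, their monodromy homomorphisms, and hence $b_1$ and $b_2$, agree. The triviality of $\cL^{\otimes a}$ forces the degree relation $b_1/r_1 + b_2/r_2 + b_3/r_3 \in \ZZ$ exactly as in the proof of Lemma \ref{lem:existence}; since $b_3 \in [0, r_3)$, this relation determines $b_3$ (and then $m$) uniquely from $b_1, b_2$. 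Hence $\cL_1 \cong \cL_2$, and running this over all factors $\mu_{a_i}$ gives an isomorphism of the underlying $H$-torsors $P_1 \cong P_2$.

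Next I would pin down a specific isomorphism. The sheaf of torsor isomorphisms $P_1 \to P_2$ is a torsor under $\underline{\mathrm{Aut}}(P_2) = H(\Gamma(\sO_{\cC_{\br, R}})) = H(R)$, using that $\cC_{\br, R}$ is proper over $R$ with $\Gamma(\sO) = R$, that $H$ is abelian, and that $H$ is affine. The given $2$-isomorphism at marking $1$ provides an isomorphism $\psi_1 : P_1|_{B\mu_{r_1, R}} \to P_2|_{B\mu_{r_1, R}}$ that intertwines $\tilde f_1$ and $\tilde f_2$ there. Since restriction induces an isomorphism $\underline{\mathrm{Aut}}(P_2) \to \underline{\mathrm{Aut}}(P_2|_{B\mu_{r_1,R}})$ (both equal $H(R)$, as $\Gamma(\sO_{B\mu_{r_1,R}}) = R$), there is a unique global $\psi : P_1 \to P_2$ restricting to $\psi_1$; note that only the existence of the marking $2$ isomorphism was used, to guarantee $P_1\cong P_2$ in the first place.

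Finally I would show that this $\psi$ automatically intertwines the equivariant maps everywhere, which is where I expect the main difficulty. Replacing $\tilde f_1$ by $\tilde f_2 \circ \psi$, the task is to show that two $H$-equivariant maps $g_1, g_2 : P_1 \to W$ that agree over marking $1$ are equal. Because the coarse map is contracted, both factor through the fiber product $Q := W \times_{\overline{[W/H]}} \Sp(R)$, which is finite over $R$ since $W \to \overline{[W/H]}$ is finite. On each connected component $P'$ of $P_1$ — each of which is finite and surjective over $\cC_{\br, R}$, hence meets the preimage of marking $1$ — the map to the affine $Q$ factors through $\Sp(\Gamma(\sO_{P'}))$, and I would argue that the restriction $\Gamma(\sO_{P'}) \to \Gamma(\sO_{P'|_{\text{marking }1}})$ is injective because $P' \to \Sp(R)$ is proper with geometrically connected fibers meeting the marking. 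This injectivity forces $g_1 = g_2$ on $P'$, hence on all of $P_1$, completing the proof. The delicate point is precisely this last rigidity: upgrading the evident agreement on geometric fibers to a scheme-theoretic identity over the possibly non-reduced base $R$, for which I rely on separatedness of $W$ (so the equalizer is closed), finiteness of the coarse map, and properness of $P'$ with geometrically connected fibers.
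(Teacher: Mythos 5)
Your proof is correct in substance, but it takes a genuinely different route from the paper's. The paper argues by (a) treating the target $B\mu_a$ via the difference torsor $f_1/f_2$, which, restricted to the complement of the third gerbe, descends to the coarse space $\AA^1_R$ by \cite[10.3]{alper13}, is trivial there by Example \ref{ex:torsors}, and extends over the third gerbe by \cite[Lem 4.21]{towards}; (b) treating a general target over a separably closed field by factoring through the reduced fiber $BH'$ of the coarse map; (c) an induction over square-zero thickenings using deformation theory (deformations of $P_R \to W$ are controlled by $\tilde\sigma^*T_W$, so factoring through a section persists); and (d) gluing a compatible tower of 2-isomorphisms over the quotients $R/\fm^n$, using that automorphisms over $\cC_{\br, R/\fm^n}$ and over the gerbe are both $H(R/\fm^n)$. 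You instead work globally over $R$ in one pass: (i) the underlying $H$-torsor is rigid, by the normal form $\sO(m)\otimes\cT_1^{\otimes b_1}\otimes\cT_2^{\otimes b_2}\otimes\cT_3^{\otimes b_3}$ for line bundles on $\cC_{\br,R}$ (the same degree computation as in Lemma \ref{lem:existence}); (ii) the set of torsor isomorphisms $P_1 \to P_2$ is an $H(R)$-torsor restricting bijectively to automorphisms over the marking-1 gerbe, so the given $\psi_1$ spreads out uniquely (the paper makes the same observation, but only in its final gluing step); (iii) the two equivariant maps to $W$ then agree because, by the degree-zero hypothesis, they factor through the affine scheme $Q = W\times_{[W/H]^{crs}}\Sp(R)$, which is finite over $R$, and global functions on each connected component $P'$ of the torsor inject into functions over the marking. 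Step (iii) is your replacement for the paper's stages (b)--(d), and it is the real gain of your approach: it eliminates both the deformation-theoretic induction and the limit argument, and it uses only that $R$ is strictly Henselian rather than complete.

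Two points need tightening, though neither is a missing idea. First, in step (i), an isomorphism of line bundles $\cL_1 \cong \cL_2$ is not yet an isomorphism of $\mu_a$-torsors: it must be corrected to commute with the trivializations of the $a$-th powers. This works because the two trivializations differ by a unit of $\Gamma(\sO_{\cC_{\br,R}}) = R$, and units of the strictly Henselian $R$ admit $a$-th roots since $a$ is prime to the residue characteristic --- exactly the point made in the proof of Lemma \ref{lem:existence}, which you should invoke. Second, in step (iii), ``proper with geometrically connected fibers'' is not by itself sufficient for injectivity of $\Gamma(\sO_{P'}) \to \Gamma(\sO_{P'|_{B\mu_{r_1,R}}})$: a proper connected scheme with an embedded component away from the marking violates it. What saves you is that $P' \to \Sp(R)$ is moreover flat with geometrically reduced fibers, since $P' \to \cC_{\br,R}$ is finite \'etale ($H$ is tame, hence \'etale) and $\cC_{\br,R}\to\Sp(R)$ is flat with geometrically reduced fibers. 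Consequently the Stein factorization of $P' \to \Sp(R)$ is finite \'etale, hence trivial over the strictly Henselian local $R$, giving $\Gamma(\sO_{P'}) = R$; and $P'|_{B\mu_{r_1,R}} \to \Sp(R)$ is finite, faithfully flat and nonempty, so $R \to \Gamma(\sO_{P'|_{B\mu_{r_1,R}}})$ is injective. With these repairs your argument is complete.
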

\begin{proof}
We begin with the case when $[W/H] = B\mu_a$ for some positive integer $a$. Let $(\cL, \varphi)$ be the pair corresponding to $f_1$ and let $(\cN, \psi)$ be the pair corresponding to $f_2$. Then the pair $(\cL \otimes \cN^{-1}, \varphi \otimes \psi^{-1})$ corresponds to a third morphism $f_1/f_2: \cC_{\br, R} \to B\mu_a$. Consider the restriction of the morphism to the complement $\cC^\circ_{\br, R}$ of the third marking in $\cC_{\br, R}$. We have a $\mu_a$-torsor on $\cC^\circ_{\br, R}$ whose restriction to the two gerbes is trivial by assumption. It follows from \cite[10.3]{alper13} that this $\mu_a$-torsor is the pullback of a $\mu_a$-torsor on the coarse space $\AA^1_R$ of $\cC^\circ_{\br, R}$. By Example \ref{ex:torsors} such a torsor is trivial. Hence $f_1$ and $f_2$ are 2-isomorphic after restriction to $\cC^\circ_{\br, R}$. It follows from \cite[Lem 4.21]{towards} that $f_1$ and $f_2$ are 2-isomorphic.

We next observe that the lemma holds when $[W/H] = BH$. Indeed, since $BH$ is a product of $B\mu_{a_i}$'s, the equivalence of maps to each $B\mu_{a_i}$ implies that the maps to $BH$ are also equivalent.

Now we consider the case of a general target $[W/H]$ when $R=K$ is a separably closed field containing $k$. We have commuting diagrams
\begin{equation}\label{eq:rect}
\begin{tikzcd}
 \cC_{\br, K}  \arrow[rr, "{f_1, \;\;f_2}"] \arrow [d] &&  {[W/H]} \arrow[d]\\
\PP^1_K \arrow[r] & \Sp(K) \arrow[r] & {[W/H]}^{crs}
\end{tikzcd}
\end{equation}
where $[W/H]^{crs}$ is the coarse space of $[W/H].$ Since $\cC_{\br, K}$ is reduced, both $f_1$ and $f_2$ factor through the maximal reduced substack of $[W/H] \times_{[W/H]^{crs}} \Sp(K)$ which is $BH'$ for some subgroup $H' \subseteq H$. So this case reduces to when the target is $BH$ which we have previously discussed.

We now use induction to prove the lemma for a general target $[W/H]$ when $R$ is the quotient of a complete local ring with separably closed residue field by a power of its maximal ideal. 
The base case is $R=K$ considered in the previous paragraph. 
By passing to an \'etale neighborhood of $[W/H]$ we can arrange that the maximal reduced substack of $[W/H] \times_{[W/H]^{crs}} \Sp(K)$ is precisely $BH$ (that is, $H$ is the stabilizer group of the point $\Sp(K) \to [W/H]^{crs}$ associate to the degree-zero stable maps $f_1, f_2$).
Observe that in this case a map $\cC_\br \to [W/H]$ corresponds to an $H$-torsor $P \to \cC_\br$ and equivariant map $P \to W$ that factors through a morphism $\Sp(K) \to W$ lifting the map $\Sp(K) \to [W/H]^{crs}$, via the projections $P \to \cC_\br \to \Sp(K)$. The morphism $\Sp(K) \to W$ also lifts the morphism $\Sp(K) \to [W/H]$ induced by the gerbe markings of $\cC_{\br}.$
For the inductive step, suppose $\Sp(R) \to \Sp(R')$ is a square zero extension (for example, $\Sp(R/\fm^n) \to \Sp(R/\fm^{n+1})$ where $\fm$ is the maximal ideal of $R$). From the above discussion we know the two maps $f_i: \cC_{\br, R'} \to [W/H]$ induce the same $H$-torsor $P_{R'} \to \cC_{\br, R'}$. We will show as part of the induction that if $P_R \to W$ factors through a morphism $\tilde \sigma: R \to W$ lifting the gerbe markings, so does $P_{R'} \to W$, so in particular if the restrictions of $f_1, f_2$ to the gerbe markings are 2-isomorphic then $f_1$ and $f_2$ are 2-isomorphic. Indeed, problem of deforming the map $\tilde f: P_R \to W$ over the square-zero thickening $P_R \hookrightarrow P_{R'}$ is controlled by $\tilde f^*T_W $, which is the pullback of $\tilde \sigma^* T_W.$ This shows that deforming $P_R \to W$ is equivalent to deforming $R \to W$, and in particular all deformations of $P_R$ to $P_{R'}$ factor through a morphism $R' \to W$ as claimed.  

Finally, we prove the lemma as stated. Let $\fm$ be the maximal ideal of a complete local ring $R$ with separably closed residue field. By assumption the two maps $B\mu_{r_1, R} \to [W/H]$ induced by $f_1$ and $f_2$ are 2-isomorphic and we fix an isomorphism. By the previous paragraph, for each positive integer $n$ we have that the two maps $\cC_{\br, R/\fm^n} \to [W/H]$ induced by $f_1$ and $f_2$ are 2-isomorphic. We claim there is a unique 2-isomorphism that restricts to the given 2-isomorphism of the two maps $B\mu_{r_1, R/\fm^n} \to [W/H]$. This is true because the automorphisms of such a map are a subset of the automorphisms of a principle $H$-bundle on $\cC_{\br, R/\fm^n}$ and since $H$ is abelian these automorphisms are sections of the trivial $H$-bundle on $\cC_{\br, R/\fm^n}$. Since $H$ is finite, automorphisms over $\cC_{\br, R/\fm^n}$ are in canonical bijection with automorphisms over $\mu_{r_1, R/\fm^n}$. Hence, we have a collection of 2-isomorphisms over $\cC_{\br, R/\fm^n}$ for $n \in \ZZ_{>0}$ that are compatible with restriction, hence glue to a 2-isomorphism of $f_1$ and $f_2$ over $\cC_{\br, R}$.
\end{proof}

We now prove that $f_1: \cC_{\br_1, R} \to [W/H]$ and $f_2: \cC_{\br_2, R} \to [W/H]$ as at the beginning of this subsection are 2-isomorphic. By taking a deeper root at the third marking, we can find a curve $\cC_{\br_3}$ with maps $q_j: \cC_{\br_3, R} \to \cC_{\br_j, R}$ for $j=1,2$ that are isomorphisms away from the third marking. By Lemma \ref{lem:agree} the two compositions $f_j \circ q_j$ agree. 
Since the maps $f_j$ are representable, it follows from e.g. \cite[6.11]{OW1} that the factorizations $\cC_{\br_3, R} \xrightarrow{q_j} \cC_{\br_j, R} \xrightarrow{f_j} [W/H]$ must both be the relative coarse moduli space of $f_j \circ q_j$. That is, $\cC_{\br_1, R} \simeq \cC_{\br_2, R}$ and the maps $f_1$ and $f_2$ agree.

\subsubsection{$\Phi_S$ is bijective on automorphisms}
Let $f:\cC_{\br, R} \to [W/H]$ be a stable map. From the proof of the bijection on isomorphism classes above, we see that there is a finite diagonalizable tame group scheme $H'$ over $R$ such that $f$ factors through a morphism $BH'_R \to [W/H]$ via an $R$-morphism $\cC_{\br, R} \to BH'_R$. Since $BH'_R = BH' \times \Sp(R)$ is a product, it is enough to prove that $\Phi_S$ is bijective on automorphisms when the target is $BH$ for a finite diagonalizable tame group scheme $H$. 

The automorphisms of a stable map $f:\cC_{\br, R} \to BH$ are pairs $(h, \tau)$ where $h$ is an automorphism of $\cC_{\br, R}$ and $\tau$ is a 2-isomorphism of $f$ and $f \circ h$. By \cite[Prop 7.1.1]{ACV03} and \cite[2.2.5(1)]{conrad}, the automorphism $h$ is trivial. 
Then the 2-isomorphism $\tau$ is an automorphism of an $H$-torsor on $\cC_{\br, R}$. Since $\cC_{\br, R}$ is connected this automorphism group is just $H(R)$.

On the other hand, an automorphism of an object $B\mu_{r_1, R} \times B\mu_{r_2, R} \to [W/H]$ of $\II_\mu([W/H])$ is a pair of automorphisms of principal $H$-bundles on $B\mu_{r_1, R}$ and $B\mu_{r_2, R}$, respectively, such that these automorphisms restrict to the same automorphism of the principal bundle corresponding to the map $\Sp(R) \to BH$. Automorphisms of an $H$-torsor on $B\mu_{r_i, R}$ or on $\Sp(R)$ are $H(R)$, so restriction from automorphisms of the $H$-torsor on $\cC_{\br , R}$ to this fiber product is a bijection.\\

This completes the proof of Proposition \ref{prop:K-II-iso}.\\

Proposition \ref{prop:K-II-iso} has the following corollary (specifically, set $S = \Sp(k)$ in \eqref{eq:local}).

\begin{corollary}\label{cor:fantechi-goettsche}
Let $H$ be a finite diagonalizable tame group scheme over $k$.
Let $$B\mu_{r_1, k}, \ B\mu_{r_2, k} \to BH$$ be two representable morphisms with isomorphic restrictions $\Sp(k) \to BH$. Then there exists a unique integer $r_3$ and representable morphism $\cC_{\br, k} \to BH$ whose restrictions to the first two marked gerbes are isomorphic to the morphisms given.
\end{corollary}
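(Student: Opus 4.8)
The plan is to obtain the corollary as the special case $S = \Sp(k)$, $\cX = BH$ of the equivalence $\Phi_S$ furnished by Proposition~\ref{prop:K-II-iso}. First I would note that taking $\wX = \Sp(k)$ and $G = H$ places us exactly in the setting of that proposition, so $\Phi$ is an isomorphism of stacks and in particular $\Phi_{\Sp(k)}$ is an equivalence of groupoids.

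Next I would unwind both sides of this equivalence, just as in the discussion following \eqref{eq:local}. On the target, an object of $\II_\mu(BH)(\Sp(k))$ is precisely a pair of morphisms $B\mu_{r_1, k}, B\mu_{r_2, k} \to BH$ together with an isomorphism of their restrictions $\Sp(k) \to BH$; the indexing of $\I_\mu$ by injective homomorphisms $\mu_{r_i} \to H$ matches the requirement that these morphisms be representable. On the source, an object of $\cK(BH)(\Sp(k))$ is a representable stable map $\cC_{\br, k} \to BH$, and $\Phi_{\Sp(k)}$ sends it to the pair $(ev_1, ev_2)$ of its restrictions to the first two gerbe markings. Thus the data named in the hypotheses of the corollary is exactly an object of the target groupoid.

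Existence then follows from essential surjectivity of $\Phi_{\Sp(k)}$: the prescribed pair lies in the essential image, so there exist an integer $r_3$ and a representable morphism $\cC_{\br, k} \to BH$ restricting to the given morphisms on the first two gerbes. Uniqueness follows from full faithfulness: any two such stable maps are sent to isomorphic objects of the target, hence are themselves isomorphic in $\cK(BH)(\Sp(k))$. Since an isomorphism of objects of $\cK(BH)$ includes an isomorphism of the underlying twisted curves, the root orders must agree, so $r_3$ is uniquely determined and the morphism $\cC_{\br, k} \to BH$ is unique up to $2$-isomorphism.

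I do not expect a genuine obstacle here, as all the substantive work is contained in Proposition~\ref{prop:K-II-iso}; the only points requiring care are the bookkeeping that makes ``representable morphism'' correspond correctly on the two sides (injectivity of the homomorphisms $\mu_{r_i} \to H$ on the inertia side, and Remark~\ref{rmk:rep} on the moduli side), and the observation that uniqueness of the object in $\cK(BH)$ upgrades to uniqueness of the integer $r_3$ via the intrinsic root orders of the curve.
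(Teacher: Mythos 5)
Your proposal is correct and is exactly the paper's argument: the paper derives the corollary by setting $S = \Sp(k)$ in the equivalence \eqref{eq:local} of Proposition~\ref{prop:K-II-iso}, with existence coming from essential surjectivity and uniqueness from injectivity on isomorphism classes, just as you describe. Your bookkeeping remarks (injective homomorphisms $\mu_{r_i} \to H$ corresponding to representable gerbe maps, and the root orders of the twisted curve pinning down $r_3$) are the same implicit identifications the paper relies on.
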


\subsubsection{Proof of Theorem \ref{thm:moduli}}
We begin by constructing the $k$-curve $C_{\zeta, \eta}$ determined by a sector $\II_\mu(\zeta, \eta)$. Fixing a sector we have injective homomorphisms $\zeta: \mu_{r_1} \to H$ and $\eta: \mu_{r_2} \to H$ of group schemes over $k$.
By Corollary \ref{cor:fantechi-goettsche}, there is a unique stable map $\cC_{\br} \to BH$ with the canonical maps $B\mu_{r_i} \to \cC_{\br}$ composing to the morphisms $B\mu_{r_i} \to BH$ induced by $\zeta$ and $\eta$. 
We define $C_{\zeta, \eta}$ to be the corresponding $H$-torsor on $\cC_{\br}$. 
The map $\cC_\br \to BH$ induces a homomorphism $\xi: \mu_{s} \to H$, where $\br = (r_1, r_2, s)$. On the other hand, we have a homomorphism $\zeta \eta: \mu_{r_3} \to G$ defined in \ref{def:zetaeta} that factors through $H \subset G$. 

\begin{lemma}\label{lem:hom}We have that $s=r_3$ and $\xi$ is $(\zeta \eta)^{-1}: \mu_{r_3} \to H$, where the inverse is in the abelian group of homomorphisms to $H$. 
\end{lemma}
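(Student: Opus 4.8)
The plan is to reduce to the case $H=\mu_a$ and then read off the homomorphism $\xi$ directly from the explicit line bundle constructed in the proof of Lemma~\ref{lem:existence}, comparing it with the Cartier-dual description of $\zeta\eta$ in Definition~\ref{def:zetaeta}. The key enabling tool is the uniqueness in Corollary~\ref{cor:fantechi-goettsche}: it guarantees that the stable map $\cC_\br \to BH$ whose first two marked gerbes recover $\zeta$ and $\eta$ is, up to $2$-isomorphism, exactly the one produced in that proof, so we are free to compute with the explicit model.

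First I would set up the dictionary identifying $\Hom(\mu_r, \mu_a)$ with the subgroup $\tfrac{1}{\gcd(a,r)}\ZZ/\ZZ \subset \QQ/\ZZ$: a homomorphism is sent to the weight $b/r$ of the character of $\mu_r$ obtained by pulling back the standard character of $\mu_a$. Under this (additive, functorial) identification I would check that Definition~\ref{def:zetaeta} sends the pair $(\zeta,\eta) \leftrightarrow (b_1/r_1, b_2/r_2)$ to $\zeta\eta \leftrightarrow b_1/r_1 + b_2/r_2$, with $r_3$ equal to the exact order of this element. This is a direct Cartier-dual computation: the composite $\mu_m \to \mu_{r_1}\times\mu_{r_2} \xrightarrow{(\zeta,\eta)} \mu_a\times\mu_a \to \mu_a$ dualizes to a map $\ZZ/a \to \ZZ/m$ sending $1 \mapsto (m/r_1)b_1 + (m/r_2)b_2$, whose associated element of $\tfrac1m\ZZ/\ZZ$ is exactly $b_1/r_1 + b_2/r_2$, and passing to the coimage only records the order $r_3$ of this element.

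Next, in the case $H=\mu_a$, I would restrict the explicit line bundle $\cL = \sO_{\PP^1}(-b_1/r_1 - b_2/r_2 - b_3/r_3)\otimes \cT_1^{\otimes b_1}\otimes \cT_2^{\otimes b_2}\otimes \cT_3^{\otimes b_3}$ to the third gerbe. Since $\cT_1$, $\cT_2$ and $\sO_{\PP^1}(\cdot)$ restrict trivially there, $\cL|_{B\mu_{r_3}} = \cT_3|^{\otimes b_3}$, so $\xi$ is the character of weight $b_3$; because $\gcd(b_3, r_3)=1$ this character has exact order $r_3$, giving $s = r_3$ and $\xi \leftrightarrow b_3/r_3$. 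The defining equation $b_1/r_1 + b_2/r_2 + b_3/r_3 \in \ZZ$ then reads $\xi = -(b_1/r_1 + b_2/r_2) = -(\zeta\eta)$ in $\QQ/\ZZ$, i.e.\ $\xi = (\zeta\eta)^{-1}$, which is the assertion. Finally I would promote the result to a general $H = \prod_{i=1}^N \mu_{a_i}$: both $\zeta\eta$ and $\xi$ are computed componentwise, since the map to $BH$ is built in Section~\ref{sec:surj} from the maps to the factors $B\mu_{a_i}$ by taking $r_3 = \mathrm{lcm}_i\, r_3^{(i)}$, a partial rigidification, and the relative coarse space; the orders match because the order of $\zeta\eta$ in $H$ is the lcm of the orders of its components.

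The main obstacle I anticipate is the second step: making precise and correct the translation between the rational-number bookkeeping of the weights $b_i/r_i$ used in Lemma~\ref{lem:existence} and the Cartier-dual/coimage formalism of Definition~\ref{def:zetaeta}, in particular keeping the sign conventions straight (so that it is $(\zeta\eta)^{-1}$ rather than $\zeta\eta$ that appears) and confirming that taking the coimage changes nothing beyond recording the exact order $r_3$. Once this dictionary is nailed down, the identity $\xi=(\zeta\eta)^{-1}$ is immediate from the single equation defining $b_3/r_3$.
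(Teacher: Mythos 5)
Your proposal is correct and takes essentially the same approach as the paper: both reduce to $H=\mu_a$ componentwise, read off $\xi$ from the explicit root-stack line bundle constructed in the proof of Lemma \ref{lem:existence} (justified because that explicit model is, up to $2$-isomorphism, the stable map in question), compute $\zeta\eta$ by Cartier duality from Definition \ref{def:zetaeta}, and conclude from the relation $b_1/r_1+b_2/r_2+b_3/r_3\in\ZZ$. The only cosmetic differences are your $\QQ/\ZZ$ bookkeeping in place of the paper's comparison of both homomorphisms after pullback to $\mu_m$, and your use of $\gcd(b_3,r_3)=1$ to get $s=r_3$ where the paper instead invokes injectivity of $\xi$ and $(\zeta\eta)^{-1}$.
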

\begin{proof}The curve $\cC_\br$ is constructed explicitly in Section \ref{sec:surj} when $k$ is separably closed. We note that that construction, using root stacks, makes sense over arbitrary $k$ and therefore is the correct description of $\cC_\br$ over arbitrary $k$. 
Let $m = \mathrm{lcm}(r_1, r_2)$. It follows from the construction of $\cC_{\br} \to BH$ in Section \ref{sec:surj} that $s$ divides $m$, so letting $\br' = (r_1, r_2, m)$ we have a morphism $\cC_{\br'} \to \cC_{\br}$ that, after composition with the stable map $\cC_{\br} \to BH$, induces a homomorphism $\xi_m: \mu_m \to BH$. Similarly it follows from the construction of $\zeta \eta$ in Definition \ref{def:zetaeta} that $r_3$ divides $m$ so we also have a homomorphism $(\zeta \eta)^{-1}_m: \mu_m \to H$. Since both $\xi$ and $(\zeta \eta)^{-1}$ are injective, it is enough to show that $\xi_m = (\zeta \eta)^{-1}_m.$ To show that these morphisms agree, writing $H = \prod \mu_{a_i}$ it is enough to show that the induced homomorphisms $\mu_m \to \mu_{a_i}$ agree for each $i$. So we may assume $H = \mu_a$ for some integer $a$.

Finally, we compute the two homomorphisms $\xi_m, (\zeta \eta)^{-1}_m: \mu_m \to \mu_a$. Returning to the homomorphisms $\mu_{r_i} \to \mu_a$ coming from the first two trivialized gerbes, we let $b_i \in [0, r_i)$ be integers such that the Cartier duals of these homomorphisms are given by $\ZZ/a\ZZ \to \ZZ/r_i\ZZ$ sending $[1]$ to $[b_i]$. We claim that both $\xi_m$ and $(\zeta \eta)^{-1}_m$ are Cartier dual to the homomorphism $\ZZ/a\ZZ \to \ZZ/m\ZZ$ sending $[1]$ to $[mb_3/s]$, where $b_3/s$ is the fraction in lowest terms satisfying  
\begin{equation}\label{eq:theeq}b_1/r_1 + b_2/r_2 + b_3/s=1.\end{equation}
In the case of $\xi_m$ this explicit description follows from the construction of $\cC_{\br} \to B\mu_a$ in the proof of Lemma \ref{lem:existence}. In the case of $(\zeta \eta)^{-1}_m$ we compute it using Definition \ref{def:zetaeta}: the homomorphism $(\zeta \eta)_m$ is Cartier dual to the composition
\[
\ZZ/a\ZZ \xrightarrow[\cdot(b_1, b_2)] \ZZ/r_1\ZZ \times \ZZ/r_2\ZZ \xrightarrow{\cdot(m/r_1, m/r_2)} \ZZ/m\ZZ
\]
which sends $[1]$ to $[m(b_1/r_1 + b_2/r_2)]$. It follows from \eqref{eq:theeq} that
\[
m(b_1/r_1 + b_2/r_2) \equiv -mb_3/s \quad \quad \quad \quad \mod m,
\]
as required.
\end{proof}

Now it makes sense to define a twisted stable map over $\II_\mu(\zeta, \eta) = [\wX^H/G]$ using the maps $\pi$ and $f$ as given in the statement of Theorem \ref{thm:moduli}. The disjoint union of these stable maps over all sectors of $\II_\mu(\cX)$ induces a morphism $\Psi:\II_\mu(\cX) \to \cK(\cX)$ which we claim is inverse to $\Phi$. For this it is enough to show that the composition
\[
\II_\mu(\cX) \xrightarrow{\Psi} \cK(\cX) \xrightarrow{\Phi} \II_\mu(\cX)
\]
is the identity, and to see this we only need to compute the pullback of the universal object on $\II_\mu(\cX)$ under $\Psi \circ \Phi$ and see that we get the same object.

By Lemma \ref{lem:sectors}, the universal object on $\II_\mu(\zeta, \eta)$ is given by the morphisms $[\wX^H/(G\times \mu_{r_i})] \to [\wX/G]$ coming from the inclusion $\wX^H \to \wX$ and the product homomorphisms $G \times \mu_{r_i} \to G$.
The restriction of this object to $\cK(\cX)$ is the restriction of the universal map $\cC \to \cX$ to the gerbes $B\mu_{r_1, \cK(\cX)}, B\mu_{r_2, \cK(\cX)} \to \cX$, and finally the restriction of this object to $\II_\mu(\zeta, \eta)$ is the restriction of the map $f: [\wX^H/G]\times[C_{\zeta, \eta}/H] \to [\wX/G]$ to \textit{its} first two gerbe markings. By the construction of $C_{\zeta, \eta}$ in the first paragraph, these gerbe markings are locally induced by (constant) sections $[\wX^H/G] \to [\wX^H/G] \times C_{\zeta, \eta}$ and the homomorphisms $\zeta, \eta: \mu_{r_i} \to H$.
It follows from the definition of $f$ in Theorem \ref{thm:moduli} that the induced morphisms $[\wX^H/(G\times \mu_{r_i})] \to [\wX/G]$ are globally induced by the inclusion $\wX^H \to \wX$ and the product homomorphisms $G \times \mu_{r_i} \to G$, as required. In particular, the morphisms $ev_1$ and $ev_2$ for the stable map over $\II_\mu(\zeta, \eta)$ given in Theorem \ref{thm:moduli} have the claimed description. The description for $\overline{ev}_{3}$ follows similarly, using Lemma \ref{lem:hom}.

This completes the proof of Theorem \ref{thm:moduli}.

\subsection{Computation of the stringy chow product}

\label{S:ring}
If $\zeta: \mu_r \to \Gm$ is a homomorphism, its Cartier dual is a homomorphism $\zeta^*: \ZZ \to \ZZ/r\ZZ$ of constant group schemes. Let $b$ be the unique integer in $[0, r)$ such that $1 \in \ZZ$ maps to $[b]$ under $\zeta^*$. We define
\[
\arg \zeta = b/r.
\]
We note that if $k$ is the complex numbers, our definition of $\arg$ agrees with the usual notion of the principal branch of the argument function, after multiplication by $2\pi$.
The following generalizes \cite[Prop 6.3]{BCS}.

\begin{proposition}\label{prop:obstruction} The obstruction sheaf $R^1\pi_*f^*T_{\cX}$ is a vector bundle on $\II_\mu(\cX)$. The top chern class of its restriction to the sector $\II_\mu(\zeta, \eta) = [\wX^H/G]$ is given as an element of the $G$-equivariant chow ring of $\wX^H$ by
\begin{equation}\label{eq:ob}
c_{top}(R^1\pi_*f^*T_{\cX}) = \prod_{\substack{\theta \in M(H)\;\mathrm{s.t.}\\ \arg \theta(\zeta) + \arg \theta(\eta) > 1}} c^{G}_{top}((N_{\wX^H/\wX})_\theta)
\end{equation}
where $H \subset G$ is the subgroup generated by $\zeta$ and $\eta$, $M(H)$ is the character group of $H$,  $(N_{\wX^H/\wX})_\theta$ is the summand of the normal bundle $N_{\wX^H/\wX}$ of weight $\theta$, and $c^G_{top}$ is the $G$-equivariant top chern class.
\end{proposition}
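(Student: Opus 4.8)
The plan is to compute the obstruction sheaf one sector at a time, using the explicit universal family from Theorem \ref{thm:moduli}. Fix a sector $\II_\mu(\zeta,\eta)=[\wX^H/G]$, over which the universal curve is the projection $\pi\colon [\wX^H/G]\times[C_{\zeta,\eta}/H]\to[\wX^H/G]$ and $f$ is induced by $\wX^H\hookrightarrow\wX$ and the product homomorphism $G\times H\to G$. First I would decompose $f^*T_\cX$ into $H$-isotypic pieces. On atlases $f$ is the map $\wX^H\times C_{\zeta,\eta}\to\wX$, $(x,c)\mapsto x$, equivariant for $G\times H\to G$, with $H$ acting on the fibers of $T_\cX|_{\wX^H}$ through $H\hookrightarrow G$. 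Decomposing $T_\cX|_{\wX^H}=\bigoplus_{\theta\in M(H)}V_\theta$ into weight spaces (each $V_\theta$ a $G$-equivariant bundle on $\wX^H$) and descending, one sees that $f^*T_\cX$ is the external tensor product $\bigoplus_\theta V_\theta\boxtimes L_\theta$, where $L_\theta$ is the line bundle on the orbicurve $\cC_\br=[C_{\zeta,\eta}/H]$ associated to the $H$-torsor $C_{\zeta,\eta}$ and the character $\theta$. Since the curve factor is pulled back from $\Sp(k)$, flat base change and the projection formula give
\[
R^1\pi_*f^*T_\cX \;=\; \bigoplus_{\theta\in M(H)} V_\theta\otimes_k H^1(\cC_\br,L_\theta),
\]
so everything reduces to computing $\dim_k H^1(\cC_\br,L_\theta)$.

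The heart of the argument is this cohomology computation on the genus-zero orbicurve, which I would carry out via the coarse degree. Write $\arg\theta(\zeta),\arg\theta(\eta),\arg\theta(\xi)$ for the ages of $L_\theta$ at the three stacky points $0,1,\infty$, where $\xi=(\zeta\eta)^{-1}$ is the third monodromy of Lemma \ref{lem:hom}. Arguing exactly as there from Definition \ref{def:zetaeta}, $\arg\theta(\zeta\eta)$ is the fractional part of $\arg\theta(\zeta)+\arg\theta(\eta)$, so $\arg\theta(\xi)$ equals $1-(\arg\theta(\zeta)+\arg\theta(\eta))$ when $\arg\theta(\zeta)+\arg\theta(\eta)<1$, equals $0$ when this sum is $1$, and equals $2-(\arg\theta(\zeta)+\arg\theta(\eta))$ when it exceeds $1$. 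Hence for $\theta\neq0$ the sum of the three ages is $1$ if $\arg\theta(\zeta)+\arg\theta(\eta)\le1$ and is $2$ if $\arg\theta(\zeta)+\arg\theta(\eta)>1$. On the other hand, $H$ is finite, so $\theta$ has finite order and $L_\theta$ is torsion in $\Pic(\cC_\br)$; thus its stacky degree is $0$, and writing $\deg_{\mathrm{st}}L_\theta=d+(\text{sum of ages})$ with $d\in\ZZ$ the coarse degree forces $d=-(\text{sum of ages})\in\{-1,-2\}$. Since $\cX$ is tame, taking invariant sections identifies $H^\bullet(\cC_\br,L_\theta)$ with $H^\bullet(\PP^1,\sO_{\PP^1}(d))$, giving $h^0(L_\theta)=\max(0,d+1)$ and $h^1(L_\theta)=\max(0,-d-1)$. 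Therefore $H^1(\cC_\br,L_\theta)$ is one-dimensional exactly when $\arg\theta(\zeta)+\arg\theta(\eta)>1$ and vanishes otherwise, while $H^0(L_\theta)=0$ for every nontrivial $\theta$.

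To finish I would identify the surviving weight spaces with normal-bundle summands. Every $\theta$ with $\arg\theta(\zeta)+\arg\theta(\eta)>1$ is nontrivial, and for nontrivial $\theta$ the sequence $0\to\sO_{\wX}^{\oplus r}\to T_{\wX}\to T_\cX|_{\wX}\to0$ together with $0\to T_{\wX^H}\to T_{\wX}|_{\wX^H}\to N_{\wX^H/\wX}\to0$ yields $V_\theta=(T_\cX|_{\wX^H})_\theta=(N_{\wX^H/\wX})_\theta$, because both $\sO_{\wX}^{\oplus r}$ and $T_{\wX^H}$ carry weight $0$. Combining with the previous paragraph, the restriction of $R^1\pi_*f^*T_\cX$ to the sector is $\bigoplus_\theta(N_{\wX^H/\wX})_\theta$, summed over the characters with $\arg\theta(\zeta)+\arg\theta(\eta)>1$, each with multiplicity one; in particular the sheaf is locally free, since these ranks are locally constant. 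Taking the $G$-equivariant top Chern class and using multiplicativity over the direct sum then gives the stated product.

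The step I expect to be most delicate is the age bookkeeping combined with the cohomology computation over a field that need not be algebraically closed: I must verify that the invariant-sections identification and the torsion/stacky-degree argument are valid for $\mu_r$-root stacks in this generality, and that the formula for $\arg\theta(\xi)$ (including the boundary cases where one of the ages vanishes) is handled uniformly, which is precisely where Lemma \ref{lem:hom} and Definition \ref{def:zetaeta} enter. The decomposition of $f^*T_\cX$ and the base-change statement for the proper morphism $\pi$ are routine but deserve care because $\cC_\br$ is a stack rather than a scheme.
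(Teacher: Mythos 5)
Your proof is correct, and its skeleton matches the paper's: decompose $f^*T_\cX$ into $H$-isotypic pieces $V_\theta \boxtimes L_\theta$ using the explicit family of Theorem \ref{thm:moduli}, reduce via the projection formula and base change to computing $H^1$ of the line bundles $L_\theta$ on $[C_{\zeta,\eta}/H]$, compute that cohomology on the coarse space $\PP^1$, and identify the surviving weight spaces with the summands $(N_{\wX^H/\wX})_\theta$ (your two exact sequences on $\wX$ are equivalent to the paper's single exact sequence on the stack combined with the vanishing of $R^1\pi_*f^*T_{[\wX^H/G]}$). The one genuine divergence is how the degree $d$ of $p_*L_\theta$ is pinned down. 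The paper notes that $\theta^{r_1r_2r_3}$ is trivial on $H$, so $(L_\theta)^{\otimes r_1r_2r_3}$ is trivial, and invokes the uniqueness in Cadman's description of line bundles on root stacks to get $d = -a_1/r_1 - a_2/r_2 - a_3/r_3$; crucially it never needs to identify the monodromy at the third point, since the constraints $a_i/r_i \in [0,1)$ already force $d=-2$ exactly when $a_1/r_1 + a_2/r_2 > 1$. You instead argue that $L_\theta$ is torsion, hence of degree zero, and make the third age explicit as $\arg\theta\bigl((\zeta\eta)^{-1}\bigr)$ via Lemma \ref{lem:hom}. Both routes are valid and rest on the same input (Cadman's result that $p_*L_\theta$ is a line bundle with $L_\theta = p^*p_*L_\theta \otimes \cT_1^{a_1}\otimes\cT_2^{a_2}\otimes\cT_3^{a_3}$, which is precisely your ``stacky degree equals coarse degree plus sum of ages'' identity, together with exactness of $p_*$ in the tame setting), and there is no circularity, since Lemma \ref{lem:hom} is established within the proof of Theorem \ref{thm:moduli}, independently of the obstruction computation. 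The paper's version is slightly leaner in that it bypasses Lemma \ref{lem:hom}; yours makes the three-point age balance (sum of ages in $\{1,2\}$ for $\theta \neq 0$) explicit, which exhibits the conceptual reason the answer depends only on whether $\arg\theta(\zeta)+\arg\theta(\eta)>1$.
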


\begin{proof}
From the explicit description in Theorem \ref{thm:moduli} we see that $f$ factors through $[\wX^H/G] \subset [\wX/G]$, so it is enough to compute $R^1\pi_*f^*E$ where $E$ is the bundle $T_\cX|_{[\wX^H/G]}$. We begin by computing $R^1\pi_*f^*E$ for a general bundle $E$ on $[\wX^H/G]$. 

Since $[\wX^H/G]$ is a global quotient stack, $E$ corresponds to a $G$-equivariant bundle on $\wX^H$, and since $H$ acts trivially on $\wX^H$ there is an eigenspace decomposition
\[
E = \bigoplus_{\theta \in M(H)} E_\theta.
\]
Hence to compute $R^1\pi_*f^*E$, it suffices to prove the following lemma.

\begin{lemma}\label{lem:cadman}
\[R^1\pi_*f^*E_\theta = \left \{ \begin{array}{ll}
E_\theta & \text{if}\;\arg \theta(\zeta) + \arg\theta(\eta) > 1\\
0 & \text{else}.
\end{array}\right .\]
\end{lemma}
\begin{proof}
From the explicit description in Theorem \ref{thm:moduli}, we see that the maps $\pi$ and $f$ differ only at the level of group homomorphisms. It follows that
\[
f^*E_\theta = \pi^*E_\theta \otimes k_{(1, \theta)},
\]
where $k_{(1, \theta)}$ is the line bundle on $[\wX^H/G] \times [C_{\zeta, \eta}/H]$ induced by the identity character of $G$ and the character $\theta$ of $H$. It follows from the projection formula \cite[Tag 0944]{stacks-project} that
\[
R^1\pi_*f^*E_\theta = E_\theta \otimes R^1\pi_*k_{(1, \theta)}.
\]
But the bundle $k_{(1, \theta)}$ is pulled back from the bundle $k_\theta$ on $[C_{\zeta, \eta}/H] \to \mathrm{Spec}(k)$ induced by the character $\theta$. Letting $\cC_{\zeta, \eta}$ denote the quotient $[C_{\zeta, \eta}/H]$, it follows from e.g. \cite[Cor 4.13]{HR17}
that $R^1\pi_*k_{(1, \theta)}$ is a trivial bundle and it is enough to compute its rank, which is equal to the dimension of
\[
H^1(\cC_{\zeta, \eta}, k_\theta).
\]

For this let $p: \cC_{\zeta, \eta} \to \PP^1$ be the coarse moduli morphism. By \cite[Tag 0732]{stacks-project},
since $R^1p_* k_\theta = 0$, we have
\[
H^1(\cC_{\zeta, \eta}, k_\theta) = H^1(\PP^1_k, p_*k_\theta)
\]
so we want to compute $p_*k_\theta$. Recall that we have given inclusions $B\mu_{r_i} \hookrightarrow \cC_{\zeta, \eta}$ for $i=1,2,3$; the restriction $k_\theta|_{B\mu_{r_i}}$ is induced by a character
\begin{equation}\label{eq:char}
\mu_{r_i} \to H \xrightarrow{\theta} \Gm
\end{equation}
whose Cartier dual defines an integer $a_i \in [0, r_i)$. By definition we have
\[
\arg \theta(\zeta) = a_1/r_1 \quad \quad \text{and} \quad \quad \arg \theta(\eta) = a_2/r_2.
\]
Let $\cT_j$ be the tautological bundle on $\cC_{\zeta, \eta}$ induced by the root stack structure at the $i$th marking, so $\cT_i^{\otimes r_i} = p^*\sO(1)$ and moreover $\cT_i|_{B\mu_{r_i}}$ is induced by the character $\mu_{r_i} \to \Gm$ of argument $1/r_i$.

By \cite[Thm 3.1.1, Cor 3.1.2]{cadman}, we have that $p_*k_\theta$ is a line bundle and
\begin{equation}\label{eq:formula2}
k_\theta = p^*p_*k_\theta \otimes \cT_1^{a_1} \otimes \cT_2^{a_2} \otimes \cT_3^{a_3}.
\end{equation}
On the other hand, since $p_*k_\theta$ is a line bundle on $\PP^1_k$ we know $p_*k_\theta = \sO(b)$ for some integer $b$. Taking the $r_1r_2r_3$th tensor power of \eqref{eq:formula2} we obtain
\[
(k_{\theta})^{\otimes {r_1r_2r_3}} = p^*\sO(r_1r_2r_3b) \otimes p^*\sO(a_1)^{\otimes r_2r_3} \otimes p^*\sO(a_2)^{\otimes r_1r_3} \otimes p^*\sO(a_3)^{\otimes r_1r_2}.
\]
The bundle $(k_{\theta})^{\otimes {r_1r_2r_3}}$  is just the line bundle on $\cC_{\zeta, \eta} = [C_{\zeta, \eta}/H]$ induced by the $r_1r_2r_3$th power of $\theta$. Since $H$ is the subgroup scheme of a diagonalizable group scheme generated by $\zeta$ and $\eta$ of orders $r_1$ and $r_2$, respectively, it follows that the character $\theta^{r_1r_2r_3}$ is trivial on $H$, and $(k_{\theta})^{\otimes {r_1r_2r_3}}$ is the trivial bundle. From the uniqueness in \cite[Cor 3.1.2]{cadman}, we obtain
\[
\sO_{\cC_{\zeta, \eta}} = p^*\sO_{\PP^1_k}(r_1r_2r_3b + a_1r_2r_3 + a_2r_1r_3 + a_3r_1r_2).
\]
It follows that
\[
b = -\frac{a_1}{r_1} - \frac{a_2}{r_1} - \frac{a_3}{r_3}.
\]
Since each $a_i/r_i$ is in $[0, 1)$, it follows that $b$ is equal to $0, -1$, or $-2$. We have $h^1(\PP^1_k, p_*k_\theta) = h^1(\PP^1_k, \sO(b)) = 1$ if $b=-2$ and this cohomology group vanishes in the other two cases. It is clear that $b$ will equal -2 exactly when $\frac{a_1}{r_1} + \frac{a_2}{r_1}>1$.

\end{proof}

Using the lemma it is straightforward to prove Proposition \ref{prop:obstruction}. First, since $[\wX^H/G]$ is a smooth substack of $[\wX/G]$ by Remark \ref{rmk:ii-smooth}, we have a short exact sequence of vector bundles
\[
0 \to T_{[\wX^H/G]} \to T_{\cX}|_{[\wX^H/G]} \to N_{[\wX^H/G]/\cX} \to 0
\] 
on $[\wX^H/G].$
Applying $R^\bullet \pi_*f^*$ we get a long exact sequence
\[
\ldots \to R^1\pi_*f^*T_{[\wX^H/G]} \to R^1\pi_*f^*T_{\cX}|_{[\wX^H/G]} \to R^1\pi_*f^*N_{[\wX^H/G]/\cX} \to R^2\pi_*f^*T_{[\wX^H/G]} \to \ldots.
\]
The bundle $T_{[\wX^H/G]}$ corresponds to a $G$-equivariant bundle on $\wX^H$ whose weights are all trivial after restriction to $H$, so by Lemma \ref{lem:cadman} we have $R^1\pi_*f^*T_{[\wX^H/G]} = R^2\pi_*f^*T_{[\wX^H/G]} = 0$.
It follows that $R^1\pi_*f^*T_{\cX}|_{[\wX^H/G]} = R^1\pi_*f^*N_{[\wX^H/G]/\cX}$ and now the proposition follows immediately from Lemma \ref{lem:cadman}.
\end{proof}

Recalling the module description \eqref{eq:module} of $A^*_{st}(\cX)$, let $e_\zeta \in A^*_{st}(\cX)$ denote the image of the fundamental class of $\I_\mu(\zeta)$ under the inclusion of $A^*(\I_\mu(\zeta))$ into $A^*_{st}(\cX)$ as a direct summand.  Any element of $A^*_{st}(\cX)$ can then be written as $\alpha e_\zeta$ for some $\zeta: \mu_r \to G$ and $\alpha \in A^*(\I_\mu(\zeta))$. We use $e_1$ to denote the fundamental class of the untwisted sector; this is the unit in $A^*_{st}(\cX).$ In general we have the following rule for multiplying these classes:
\[
\alpha e_\zeta \star \beta e_\eta = \overline{ev}_{3, *}(\,ev_1^*\alpha \,\cup\, ev_2^*\beta \,\cup \, Ob(\zeta, \eta)\,) \,e_{\zeta \eta} 
\]
where $Ob(\zeta, \eta)$ is explicitly given by the right hand side of \eqref{eq:ob} and we set $e_{\zeta \eta}=0$ if $\I_\mu(\zeta \eta)$ is empty.

\begin{corollary}\label{cor:hom}
Let $K$ be a field containing $k$ and let $f:\cX_K \to \cX$ be the induced base change map. Then the map $A^*_{st}(\cX) \to A^*_{st}(\cX_K)$ sending $\alpha e_\zeta$ to $f^*(\alpha)e_\zeta$ is a ring homomorphism, and the induced homomorphism
\[
A^*(\cX_K) \otimes_{A^*(\cX)} A^*_{st}(\cX) \to A^*_{st}(\cX_K).
\]
is injective (resp. surjective) whenever the natural maps $A^*(\cX_K) \otimes_{A^*(\cX)} A^*(\I_\mu(\zeta)) \to A^*(\I_\mu(\zeta)_K)$ are injective (resp. surjective) for all sectors $\I_\mu(\zeta)$. 
\end{corollary}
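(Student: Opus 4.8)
The plan is to reduce everything to the observation, implicit throughout the preceding sections, that all the combinatorial data entering the stringy product is insensitive to the ground field, so that the only field-dependent input is the Chow groups of the individual sectors. By Lemma \ref{lem:sectors} the sectors are indexed by injective homomorphisms $\zeta:\mu_r\to G$, a set that does not change under the extension $K/k$, and $\I_\mu(\zeta)_K=[\wX^\zeta_K/G]$ since formation of fixed loci commutes with base change. Likewise the product $\zeta\eta$ of Definition \ref{def:zetaeta}, the age computed from \eqref{eq:formula}, and the subgroup $H$ generated by $\zeta,\eta$ are purely combinatorial, hence unchanged. I would first record that $f\colon\cX_K\to\cX$ is flat, being the base change of the flat morphism $\Sp K\to\Sp k$, so that flat pullback $f^*$ is defined on all the relevant (equivariant) Chow groups and preserves codimension; since ages are computed combinatorially and agree over $k$ and $K$, the map $\alpha e_\zeta\mapsto f^*\alpha\,e_\zeta$ respects the age-shifted grading of \eqref{eq:module}.

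For the ring-homomorphism claim I would check that $f^*$ is compatible with each ingredient of the product formula
\[
\alpha e_\zeta\star\beta e_\eta=\overline{ev}_{3,*}\bigl(ev_1^*\alpha\cup ev_2^*\beta\cup Ob(\zeta,\eta)\bigr)\,e_{\zeta\eta}.
\]
By Theorem \ref{thm:moduli} the maps $ev_1,ev_2,\overline{ev}_3$ are the inclusions of $[\wX^H/G]$ into the relevant sectors, and these commute with base change, so $f^*ev_i^*=ev_i^*f^*$. By Proposition \ref{prop:obstruction} the class $Ob(\zeta,\eta)$ is a product of equivariant top Chern classes of the weight summands $(N_{\wX^H/\wX})_\theta$ selected by the field-independent condition $\arg\theta(\zeta)+\arg\theta(\eta)>1$; since the normal bundle, its weight decomposition, and Chern classes all pull back under $f$, the class $f^*Ob(\zeta,\eta)$ equals the corresponding obstruction class over $K$. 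The one genuinely geometric point is the pushforward: the square relating $\overline{ev}_3$ to its base change is Cartesian and $\overline{ev}_3$ is a closed immersion of the form $[\wX^H/G]\hookrightarrow[\wX^{\zeta\eta}/G]$, hence proper, so flat base change gives $f^*\overline{ev}_{3,*}=\overline{ev}_{3,*}f^*$. Combining these with the compatibility of $f^*$ with cup products, and with $f^*e_1=e_1$ (flat pullback preserves fundamental classes), yields $f^*(\alpha e_\zeta\star\beta e_\eta)=(f^*\alpha\,e_\zeta)\star(f^*\beta\,e_\eta)$.

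For the second assertion I would note that this ring homomorphism is $A^*(\cX)$-linear for the module structure in which $A^*(\cX)$ acts through the untwisted sector, so by extension of scalars it induces the stated $A^*(\cX_K)$-algebra map. The key identification is that this action is the natural one: taking $\zeta$ trivial in the product formula, the obstruction is empty (the trivial character has $\arg=0$) and $\overline{ev}_3=ev_2=\mathrm{id}$, so $c\,e_1\star\alpha e_\zeta=(c|_{\I_\mu(\zeta)}\cup\alpha)\,e_\zeta$; that is, $A^*(\cX)$ acts on the $\zeta$-summand by restriction followed by cup product. Consequently \eqref{eq:module} is a decomposition of $A^*(\cX)$-modules, and since tensor product commutes with direct sums,
\[
A^*(\cX_K)\otimes_{A^*(\cX)}A^*_{st}(\cX)=\bigoplus_\zeta\bigl(A^*(\cX_K)\otimes_{A^*(\cX)}A^*(\I_\mu(\zeta))\bigr)e_\zeta.
\]
On the $\zeta$-summand the induced map is exactly the comparison map $A^*(\cX_K)\otimes_{A^*(\cX)}A^*(\I_\mu(\zeta))\to A^*(\I_\mu(\zeta)_K)$ of the hypothesis, since both send $c\otimes\alpha$ to $c|_{\I_\mu(\zeta)_K}\cup f^*\alpha$. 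A direct sum of maps is injective (respectively surjective) precisely when each summand is, giving the claim.

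I expect the main obstacle to be the bookkeeping in the ring-homomorphism step: verifying that the pushforward along $\overline{ev}_3$ commutes with the flat base change $f^*$, which requires identifying the Cartesian square and checking properness of the evaluation map sector-by-sector, and confirming that the obstruction class is literally preserved rather than merely formally similar. Once these compatibilities are in hand, the extension-of-scalars and direct-sum arguments of the second part are routine.
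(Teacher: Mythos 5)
Your proposal is correct and follows essentially the same route as the paper's proof of the ring-homomorphism claim: flatness of $f$, the identification $\II_\mu(\cX_K) = \II_\mu(\cX) \times_\cX \cX_K$ (so flat pullback commutes with $ev_i^*$ and the proper pushforward $\overline{ev}_{3,*}$), and the observation that $f^*N_{\wX^H/\wX} = N_{\wX^H_K/\wX_K}$ forces $f^*Ob(\zeta,\eta)$ to equal the obstruction class over $K$. The paper treats the injectivity/surjectivity assertion as immediate and gives no argument for it; your sector-by-sector decomposition, with the explicit identification $c\,e_1 \star \alpha e_\zeta = (c|_{\I_\mu(\zeta)} \cup \alpha)e_\zeta$ of the $A^*(\cX)$-module structure, is precisely the routine argument the paper leaves implicit, and you carry it out correctly.
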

\begin{proof}
The only thing to check is that the rule $\alpha e_\zeta \mapsto f^*(\alpha) e_\zeta$ is a ring homomorphism. The image of $\alpha e_\zeta \star \beta e_\eta$ is the element
\begin{equation}\label{eq:hom}
f^*(\overline{ev}_{3, *}(\,ev_1^*\alpha \,\cup\, ev_2^*\beta \,\cup \, Ob(\zeta, \eta)\,) )\,e_{\zeta \eta} = \overline{ev}_{3, *}(\,f^*ev_1^*\alpha \,\cup\, f^*ev_2^*\beta \,\cup \, f^*Ob(\zeta, \eta)\,) \,e_{\zeta \eta}
\end{equation}
since $f$ is flat and $\II_\mu(\cX_K) = \II_\mu(\cX) \times_\cX \cX_K$. Using flatness of $f$ again we have $f^*N_{\wX^H/\wX} = N_{\wX^H_K/\wX_K}$, and in particular $f^*Ob(\zeta, \eta) = Ob(\zeta, \eta)$. It follows that \eqref{eq:hom} is equal to $f^*(\alpha)e_\zeta \star f^*(\beta) e_\eta$.
\end{proof}

\begin{corollary}\label{cor:toric}
Let $K$ be a field containing $k$. If $\cX$ arises from a stacky fan as in \cite{BCS}, then the canonical map $A^*_{st}(\cX) \to A^*_{st}(\cX_K)$ is an isomorphism. 
\end{corollary}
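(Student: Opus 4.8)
The plan is to deduce Corollary \ref{cor:toric} directly from Corollary \ref{cor:hom}, whose only remaining hypothesis is that the base-change maps
\[
A^*(\cX_K) \otimes_{A^*(\cX)} A^*(\I_\mu(\zeta)) \to A^*(\I_\mu(\zeta)_K)
\]
be isomorphisms for every sector $\I_\mu(\zeta)$. Thus the whole content of the corollary is the verification of this single condition in the toric setting. By Lemma \ref{lem:sectors} each sector is itself a quotient $\I_\mu(\zeta) = [\wX^\zeta/G]$, where $\wX = \AA^n \setminus Z$ for the data coming from the stacky fan and $G = \Gm^k$ (more generally a diagonalizable group). First I would observe that $\wX^\zeta$ is again an open subvariety of a coordinate subspace of $\AA^n$ cut out by the torus-fixed-point condition, so that each sector $\I_\mu(\zeta)$ is \emph{again} a toric Deligne-Mumford stack arising from an induced stacky fan. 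Hence it suffices to treat the untwisted sector $\I_\mu(1) = \cX$ itself and observe that the argument applies verbatim to each $\I_\mu(\zeta)$.

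The key computational input is that for a smooth toric Deligne-Mumford stack the Chow ring is given by a Stanley-Reisner-type presentation. Concretely, for $\cX = [(\AA^n\setminus Z)/G]$ the equivariant description gives $A^*(\cX) = A^*_G(\AA^n \setminus Z)$, and since $\AA^n$ is $G$-equivariantly contractible one has $A^*_G(\AA^n) = A^*(BG)$, a polynomial ring on the characters of $G$; removing the closed subset $V(Z)$ of the appropriate (high) codimension imposes the monomial ideal relations of the fan. The upshot is that $A^*(\cX)$ is a \emph{free} $\ZZ$-module with a basis of monomials, and this basis is manifestly insensitive to the ground field: the Stanley-Reisner presentation is defined purely combinatorially from the stacky fan, so $A^*(\cX_K)$ is obtained from $A^*(\cX)$ simply by extending scalars along $\ZZ \to \ZZ$ (i.e. the presentation has $\ZZ$-coefficients and does not change under base change). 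In other words, the natural map $A^*(\cX) \to A^*(\cX_K)$ is an isomorphism, and likewise $A^*(\I_\mu(\zeta)) \to A^*(\I_\mu(\zeta)_K)$ is an isomorphism for each sector.

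Granting this, the tensor condition of Corollary \ref{cor:hom} becomes trivial: if $A^*(\cX_K) = A^*(\cX)$ and $A^*(\I_\mu(\zeta)_K) = A^*(\I_\mu(\zeta))$ (the base-change maps being isomorphisms), then
\[
A^*(\cX_K) \otimes_{A^*(\cX)} A^*(\I_\mu(\zeta)) = A^*(\cX) \otimes_{A^*(\cX)} A^*(\I_\mu(\zeta)) = A^*(\I_\mu(\zeta)) = A^*(\I_\mu(\zeta)_K),
\]
so both the injectivity and surjectivity hypotheses hold. Applying Corollary \ref{cor:hom} then gives that $A^*_{st}(\cX) \to A^*_{st}(\cX_K)$ is an isomorphism of rings, as desired.

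The main obstacle is entirely in the middle paragraph: justifying that the ordinary Chow ring of a toric Deligne-Mumford stack is computed by a field-independent combinatorial presentation, and in particular that the base-change map is an isomorphism on each sector. The cleanest route is to invoke the known presentation of $A^*(\cX)$ for stacky fans (the integral analog of the Borisov-Chen-Smith description, e.g. via the equivariant localization $A^*(\cX) = A^*_G(\AA^n \setminus Z)$ together with the excision sequence for the complement of $Z$); one must check that this presentation is valid over an arbitrary field, which follows because the generators are equivariant Chern classes of the tautological line bundles and the relations are the linear relations of the fan together with the monomial relations, all of which are stable under base change. A subtle point worth flagging is that one should confirm each fixed locus $\wX^\zeta$ indeed gives a toric substack with a similarly field-independent Chow ring, which I would handle by noting $\wX^\zeta$ is a coordinate-subspace intersection and re-running the same equivariant computation on the reduced fan.
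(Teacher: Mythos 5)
Your proposal is correct and takes essentially the same approach as the paper: both reduce to the tensor condition of Corollary \ref{cor:hom} by observing that the sectors $\I_\mu(\zeta)$ are themselves toric Deligne-Mumford stacks and that the Chow ring of a toric stack is unchanged under base field extension, which makes the tensor condition trivially an isomorphism. The only difference is one of detail: the paper cites \cite[Prop 4.7]{BCS} for the toricness of the sectors and simply asserts the field-independence of toric Chow rings, whereas you sketch justifications for both via the coordinate-subspace description of the fixed loci and the Stanley-Reisner/equivariant presentation.
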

\begin{proof}
For any toric stack the natural map $A^*(\cX) \to A^*(\cX_K)$ is an isomorphism, and the sectors $\I_\mu(\zeta)$ are themselves toric stacks by \cite[Prop 4.7]{BCS}. So the maps $A^*(\cX_K) \otimes_{A^*(\cX)} A^*(\I_\mu(\zeta)) \to A^*(\I_\mu(\zeta)_K)$ in the statement of Corollary \ref{cor:hom} are isomophsims.
\end{proof}

\section{Stringy chow rings of weighted blow-ups}\label{S:summary}
We now specialize to the situation when $\cX$ is a weighted blowup of a smooth variety. Our goal is to describe $A^*_{st}(\cX)$ in this situation and discuss finite generation of this ring.

\subsection{Background on weighted blow-ups}\label{S:wblow}
In this section we recall material about weighted blow-ups and their (classical) chow rings. Everything in this section is either stated or implicit in \cite{AO, QR}. 

\subsubsection{Definitions}
Let $X$ be a smooth variety over a field of characteristic zero and let $\sI_\bullet$ be a Rees algebra on $X$. For the remainder of this paper, we make the following regularity assumption on $\sI_\bullet$.

\begin{assumption}\label{a1} The variety $V(\sI_1)$ is a regular subvariety of $X$. Moreover, Zariski locally on $X$, we may write $\sI_\bullet = (x_1, a_1) + \ldots + (x_m, a_m)$ for some positive integers $a_1, \ldots, a_m$ and regular sequence $x_1, \ldots, x_m$ of functions on $X$. That is, $\sI_\bullet$ is (locally) the smallest Rees algebra containing $x_i$ in degree $a_i$.
\end{assumption}
\begin{remark}\label{rmk:reg-equiv}
    Since \(X\) is Noetherian, it is equivalent to assume the $x_i$ in Assumption \ref{a1} form a quasi-regular sequence. 
\end{remark}

 We define the stack $\cX$ to be the weighted blowup of $X$ along $\sI_\bullet$, and we let $\wX$ be its $\Gm$-cover. That is,
\[
\cX := \sB l_{\sI_\bullet}(X) = \sProj_X(\sI_\bullet), \quad \quad \quad \quad \quad  \wX := \Spec_X(\sI_\bullet) \setminus V(\sI_+),
\]
so $\cX = [\wX/\Gm]$ where the $\Gm$ action on $\wX$ is induced by the grading on $\sI_\bullet$. Let $Y \subset X$ be the closed subvariety defined by $\sI_1$, and let $\cY \subset \cX$ be the divisor defined by the ideal sheaf $\sI_{\bullet + 1} \hookrightarrow \sI_\bullet$, and let $\wY \subset \wX$ be the preimage of $Y$ under $\wX \to \cX$. In other words, $\cY = [\wY/\Gm].$ Then $\cY$ is the exceptional divisor of the blowup and we have a commuting diagram
\begin{equation}\label{eq:square}
\begin{tikzcd}
\cY \arrow[r, "j"] \arrow[d, "g"] & \cX \arrow[d, "f"]\\
Y \arrow[r, "i"] & X.
\end{tikzcd}
\end{equation}

\begin{lemma}\label{lem:smooth}
Assumption \ref{a1} implies the weighted blowup $\cX$ is smooth.
\end{lemma}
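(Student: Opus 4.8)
The plan is to reduce to a smoothness statement for the $\Gm$-cover $\wX$ and then to a universal toric model. Since $\cX = [\wX/\Gm]$, the projection $\wX \to \cX$ is a $\Gm$-torsor, hence smooth and surjective; as smoothness of a stack over $k$ may be checked after a smooth surjection from a scheme, it suffices to prove that $\wX$ is smooth over $k$. This is local on $\wX$, and I would first dispose of the locus over $X \setminus Y$: there $\sI_\bullet$ is locally $\sO_X[t]$, so $\cX \to X$ is an isomorphism onto a smooth variety and $\wX$, being a $\Gm$-torsor over it, is smooth. It then remains to treat a neighborhood of $Y = V(\sI_1)$.

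So I would work Zariski-locally near $Y$, where by Assumption \ref{a1} we may take $\sI_\bullet$ to be the smallest Rees algebra containing each $x_i$ in degree $a_i$ for a regular sequence $x_1, \ldots, x_m$; concretely $\sI_d$ is the ideal generated by the monomials $x^\gamma$ with $\sum_i \gamma_i a_i \geq d$. Consider the universal situation on $\AA^m = \Sp k[z_1, \ldots, z_m]$ with the analogous Rees algebra $\sJ_\bullet$ generated by the coordinates $z_i$ in degree $a_i$, set $\widetilde{\AA}^m := \Spec_{\AA^m}(\sJ_\bullet) \setminus V(\sJ_+)$, and let $\phi = (x_1, \ldots, x_m) \colon X \to \AA^m$. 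By the very description of $\sI_\bullet$ we have $\sI_d = \phi^*\sJ_d$, hence $\sI_\bullet = \phi^*\sJ_\bullet$; since relative $\Spec$ and the deletion of $V(\sI_+)$ commute with base change, this gives $\wX = X \times_{\AA^m} \widetilde{\AA}^m$ with second projection the base change of $\phi$. The point is that $\phi$ is smooth in a neighborhood of $Y$: since $X$ is smooth and (using Remark \ref{rmk:reg-equiv}) $x_1, \ldots, x_m$ form part of a regular system of parameters, after passing to an \'etale neighborhood we may complete them to \'etale coordinates $X \to \AA^n$, exhibiting $\phi$ as an \'etale map followed by a coordinate projection. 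Hence $\wX \to \widetilde{\AA}^m$ is smooth near $Y$, and it is enough to prove that $\widetilde{\AA}^m$ is smooth over $k$.

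For the universal model I would argue torically. The algebra $\sJ_\bullet$ is $k[S]$ for the saturated semigroup $S = \{(\gamma, d) \in \ZZ_{\geq 0}^m \times \ZZ_{\geq 0} : \sum_i a_i\gamma_i \geq d\}$, so $\Spec_{\AA^m}(\sJ_\bullet)$ is the corresponding affine toric variety, the $\Gm$-action recording the $d$-grading. Deleting $V(\sJ_+)$ and forming the quotient identifies $[\widetilde{\AA}^m/\Gm]$ with the toric (stacky) weighted blowup of $\AA^m$ at the origin, i.e.\ the star subdivision of the smooth cone $\mathrm{Cone}(e_1, \ldots, e_m)$ along the ray through $(a_1, \ldots, a_m)$. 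Its maximal cones are $\mathrm{Cone}(e_1, \ldots, \widehat{e_i}, \ldots, e_m, (a_1, \ldots, a_m))$, of index $a_i$, and with the evident lattice markings these give the smooth charts $[\AA^m/\mu_{a_i}]$ (using $\mathrm{char}\,k = 0$); their $\Gm$-covers cover $\widetilde{\AA}^m$ by smooth opens. This is exactly the chart description of weighted blowups in \cite{AO, QR}.

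The main obstacle I anticipate is the reduction step rather than the toric endgame: one must check that the local presentation of $\sI_\bullet$ in Assumption \ref{a1} matches the pullback $\phi^*\sJ_\bullet$ on the nose (so that no higher-degree or embedded components are introduced), and that the regular-sequence hypothesis is precisely what makes $\phi$ smooth near $Y$, so that smoothness transfers across the base change $\wX = X \times_{\AA^m} \widetilde{\AA}^m$. Once this identification is in place over a locus where $\phi$ is smooth, the smoothness of the universal toric model is classical.
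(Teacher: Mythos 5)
Your proof is correct, but it follows a genuinely different route from the paper's. The paper also works Zariski-locally in the presentation of Assumption \ref{a1} and also establishes the key compatibility that flat pullback of the Rees algebra $(x_1,a_1)+\cdots+(x_m,a_m)$ is the Rees algebra generated by the pullbacks; but it applies this compatibility to the fpqc cover $X_{\overline{k}}\to X$, reducing to an algebraically closed base field, where it invokes \cite[Tag 063R]{stacks-project} and \cite[Cor.~5.3.2]{QR} (via Remark \ref{rmk:reg-equiv}) to conclude that the weighted blowup is \emph{regular}, and finally uses perfectness of $\overline{k}$ to upgrade regular to smooth. You instead apply the same flat-pullback compatibility to the chart map $\phi=(x_1,\ldots,x_m)\colon X\to\AA^m$, which is smooth near $Y$ because Assumption \ref{a1} makes the $x_i$ part of a regular system of parameters and characteristic zero makes the Jacobian criterion available; this exhibits $\wX$ near $Y$ as a smooth pullback of the universal model $\widetilde{\AA}^m$, whose smoothness you verify by classical toric geometry. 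What your route buys: it is self-contained modulo standard toric facts (no appeal to QR's regularity theorem), and it yields the structural statement that, smooth-locally on $X$, the weighted blowup is pulled back from the standard toric weighted blowup of $\AA^m$ at the origin. What the paper's route buys: it is shorter, and it isolates the characteristic-zero/perfectness hypothesis in the single final step ``regular implies smooth,'' whereas your argument needs separability earlier, at the smoothness of $\phi$; note that some such hypothesis is unavoidable, since over an imperfect field (e.g.\ $k=\mathbb{F}_p(t)$, $Y=V(x^p-t)\subset\AA^1_k$, weight $b$ prime to $p$) Assumption \ref{a1} holds but the weighted blowup, here a $b$-th root stack along $Y$, is regular without being smooth. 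One point of order in your write-up: the identification $\sI_\bullet=\phi^*\sJ_\bullet$ equates the coherent-sheaf pullback with the image ideals, so it should be asserted only \emph{after} flatness of $\phi$ near $Y$ is in hand --- exactly the issue you flag in your final paragraph, and exactly the content of the paper's flat-base-change step --- so the logical order is: $\phi$ smooth (hence flat) near $Y$, then $\phi^*\sJ_\bullet=\sI_\bullet$ there, then $\wX=X\times_{\AA^m}\widetilde{\AA}^m$ near $Y$.
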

\begin{proof}
The claim may be checked Zariski-locally on $X$, so we may assume $\sI_\bullet = (x_1, a_1) + \ldots + (x_m, a_m).$ We first claim that formation of $\sB l_{\sI_\bullet}(X)$ commutes with flat basechange on $X$. That is, if $g: X' \to X$ is flat, then the $\sO_{X'}$-algebra $g^*\sI_{\bullet}$ is the Rees algebra on $X'$ given by
\[
\sI_\bullet' := (g^*x_1, a_1) + \ldots + (g^*x_m, a_m).
\]
Granting that $g^*\sI_{\bullet} = \sI'_\bullet$, from functoriality of the stacky proj construction we get that the basechange of $\sB l_{\sI_\bullet}(X)$ to $X'$ is $\sB l_{\sI_\bullet'}(X')$.

To see the equality $g^*\sI_{\bullet} = \sI'_\bullet$, first note that flatness of $g$ implies $g^*\sI_{\bullet}$ \textit{is} a Rees algebra, and containment of generators then shows $\sI'_\bullet \subset g^*\sI_\bullet$. The reverse containment follows from direct computation. 

Finally, to see that $\sB l_{\sI_\bullet}(X)$ is smooth, let $\overline{k}$ be an algebraic closure of $k$. Then $X_{\overline{k}} \to X$ is an fpqc cover, and by \cite[Tag 02VL]{stacks-project}, it is enough to show that $\sB l_{\sI_\bullet}(X) \times_X X_{\overline{k}}$ is smooth. By setting $X' = X_{\overline{k}}$ in the above, this fiber product is equal to $\sB l_{\sI_\bullet'}(X_{\overline{k}})$. 
By \cite[Tag 063R]{stacks-project}, \(\sI_\bullet'\) defines a quasi-regular weighted closed immersion. It follows from Remark \ref{rmk:reg-equiv} and \cite[Cor. 5.3.2]{QR} that $\sB l_{\sI_\bullet'}(X_{\overline{k}})$ is regular. Since \(\overline{k}\) is perfect, $\sB l_{\sI_\bullet'}(X_{\overline{k}})$ is smooth.
\end{proof}

Associated to the Rees algebra $\sI_\bullet$ are the weighted conormal algebra $\sC$ and the weighted conormal sheaf $\sN^\vee$ on $Y$, given by
\[
\sC := \bigoplus_{n \geq 0} \sI_n/\sI_{n+1} \quad \quad \quad \quad \quad \quad \sN^\vee := \sC_+/\sC_+^2
\]
where $\sC_+$ is the irrelevant ideal $\oplus_{n>0} \sI_n/\sI_{n+1}$ of $\sC$.
Note that $\sC$ is a $\ZZ_{\geq 0}$-graded sheaf of algebras while $\sN^\vee$ is a $\ZZ_{> 0}$-graded sheaf of modules. Note also that $\cY$ is the relative stacky proj of $\sC$ over $Y$. According to \cite[Prop 5.1.4]{QR}, our Assumption \ref{a1} guarantees that $\cY = \sProj_Y(\sC)$ is a twisted weighted projective bundle over $Y$ (or projectivization of weighted affine bundle, in the language of \cite{AO}) and $\sN^\vee$ is its associated weighted vector bundle (see \cite[Sec 2.1.9]{QR}). In particular $\sN^\vee$ is a $\Gm$-equivariant locally free sheaf on $Y$, with negative \textit{weights},\footnote{
The degrees of a graded module are the negations of the weights of the associated $\Gm$-representation.
} and we define the \textit{weighted normal sheaf} $\sN$ to be its dual (so $\sN$ has positive weights). For $a \in \ZZ_{> 0}$ we let $\sN_a \subseteq \sN$ be the direct summand of weight $a$ and we let $\be_a$ be its $\Gm$-equivariant top chern class in $A^*_{\Gm}(Y) = A^*(Y)[t]$. We identify $t$ with the equivariat first chern class of the topologically trivial line bundle on $Y$ with $\Gm$-weight 1 (see \cite[Example 2.2]{AO}). Summary:
\[
\sN := (\sN^\vee)^\vee \quad \quad \quad \quad \quad \sN = \bigoplus_{a \in \ZZ_{>0}} \sN_a
\]
\[
\be_a := c_{top}^{\Gm}\left(\sN_a\right) = c_{r_a}\left(\sN_a\right)+a tc_{r_a-1}\left(\sN_a\right)+\dots+a^{r_a}t^{r_a},
\]
where $r_a$ is the rank of the bundle $\sN_a$ on $Y.$ (The rank $r_a$ is a locally constant function on $Y$, and the rightmost expression for $\be_a$ is best interpreted on every connected component of $Y$.)  We observe for all but finitely many $a \in \ZZ_{>0}$, the bundle $\sN_a$ has rank zero and $\be_a=1$. 

\subsubsection{A special situation}\label{sec:special}
There is an important special case when stringy chow ring of $\cX$ will have additional properties. Let $J_1, \ldots, J_m$ be ideal sheaves on $X$, and let $b_1, \ldots, b_m$ be positive integers. 
Let $\sJ_\bullet$ be the smallest Rees algebra containing $J_i$ in degree $b_i$ (see \cite[Def 3.1.5]{QR}). In the notation of loc. cit. we have
\[
\sJ_\bullet := (J_1,b_1)+\dots+(J_m,b_m).
\]  
In this setting we moreover make the following regularity assumption on $\sJ_\bullet$:

\begin{assumption}\label{a2}
The variety $V(\sI_1)$ is a regular subvariety of $X$. Moreover, Zariski locally on $X$, we may write 
\begin{align*}
\sJ_\bullet = (x_{11}, b_1) + (x_{12}, b_1) + \ldots + (x_{1k_1}, b_1) &+ (x_{21}, b_2) + (x_{22}, b_2) + \ldots + (x_{2k_2}, b_2) + \ldots \\
& + (x_{m1}, b_m) + (x_{m2}, b_m) + \ldots + (x_{mk_m}, b_m)
\end{align*}
where $x_{11}, x_{12}, \ldots, x_{mk_m}$ is a regular sequence and for each $i$ the subsequence $x_{i1}, x_{i2}, \ldots, x_{ik_i}$ generates $J_i$.
\end{assumption}
It follows from Assumption \ref{a2} that $\cY$ is an weighted projective bundle on $Y$ (recall that under Assumption \ref{a1}, it is in general only a twisted weighted projective bundle). 

\begin{lemma}\label{lem:exceptional}
The sum $\oplus_{k=1}^m J_k/J_k^2$ is a graded locally free sheaf on $Y$ where $J_k/J_k^2$ has degree $b_k$. The exceptional divisor $\cY$ is isomorphic to a weighted projective bundle on $Y$:
\[
\cY = \sProj_Y\left(\Sym^\bullet_{\sO_Y}\bigoplus_{k=1}^m J_k/J_k^2\right).
\]
\end{lemma}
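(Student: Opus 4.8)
The plan is to reduce the statement to a local computation on $Y$ and then invoke the structure of $\sJ_\bullet$ forced by Assumption \ref{a2}. The claim has two parts: that $\bigoplus_k J_k/J_k^2$ is a graded locally free sheaf on $Y$ with $J_k/J_k^2$ placed in degree $b_k$, and that $\cY$ is the (honest, untwisted) weighted projective bundle $\sProj_Y(\Sym^\bullet_{\sO_Y} \bigoplus_k J_k/J_k^2)$. Recall from the background that $\cY = \sProj_Y(\sC)$ where $\sC = \bigoplus_{n\geq 0} \sI_n/\sI_{n+1}$ is the weighted conormal algebra, and that under Assumption \ref{a1} this is in general only a \emph{twisted} weighted projective bundle. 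So the content of the lemma is that Assumption \ref{a2} rigidifies the twisting, and I expect the proof to turn on identifying $\sC$ with $\Sym^\bullet_{\sO_Y}\bigoplus_k J_k/J_k^2$ as graded $\sO_Y$-algebras.

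\textbf{First I would} establish the local freeness and grading. Working Zariski-locally on $X$ as permitted by Assumption \ref{a2}, write $\sJ_\bullet$ in terms of the regular sequence $x_{11},\ldots,x_{mk_m}$ with $x_{i1},\ldots,x_{ik_i}$ generating $J_i$ in degree $b_i$. The key observation is that because the full sequence is regular (equivalently quasi-regular, by Remark \ref{rmk:reg-equiv}), each $J_k/J_k^2$ is locally free on $Y = V(\sI_1)$ of rank $k_k$, generated by the images of $x_{k1},\ldots,x_{kk_k}$; this is the standard conormal-sheaf computation for a regularly embedded subscheme, applied to the partial regular sequence cutting out $Y$. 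The grading assignment ($J_k/J_k^2$ in degree $b_k$) is dictated by the weight $b_k$ carried by the generators $x_{kj}$. Since local freeness and the rank are checkable locally and glue, this part is essentially bookkeeping.

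\textbf{The main step} is the identification of $\sC$ with the symmetric algebra. I would argue that the natural surjection $\Sym^\bullet_{\sO_Y}\bigoplus_k J_k/J_k^2 \twoheadrightarrow \sC$, sending the degree-$b_k$ generators of $J_k/J_k^2$ to their classes in $\sC_{b_k} = \sI_{b_k}/\sI_{b_k+1}$, is an isomorphism. Surjectivity is clear since the $x_{kj}$ generate $\sI_\bullet$ as a Rees algebra. For injectivity, I would compare graded pieces: the Assumption \ref{a2} description exhibits $\sI_\bullet$ locally as the Rees algebra generated by a regular sequence with prescribed weights, and the associated graded ring of such a Rees algebra along $\sI_1$ is a polynomial algebra over $\sO_Y$ on the conormal generators—precisely the symmetric algebra, with no relations beyond those forced by the grading. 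Concretely this is the statement that for a regular sequence the associated graded is a free polynomial algebra, so the two sides have matching Hilbert functions in each degree, forcing the surjection to be an isomorphism. (Alternatively one can cite the weighted quasi-regular immersion structure from \cite[Tag 063R]{stacks-project} together with \cite[Prop 5.1.4]{QR}, which already identifies the conormal algebra in the quasi-regular case.)

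\textbf{I expect the main obstacle} to be the untwisting, i.e.\ upgrading the local statement to a global isomorphism $\cY \cong \sProj_Y(\Sym^\bullet \bigoplus_k J_k/J_k^2)$ and confirming that the twisting present under Assumption \ref{a1} genuinely disappears. Under Assumption \ref{a1} the conormal algebra is only locally a symmetric algebra and $\cY$ is a twisted weighted projective bundle; the extra structure in Assumption \ref{a2}—that the generators in each degree $b_k$ assemble into an honest ideal sheaf $J_k$ defined \emph{globally} on $X$, not merely local generators—is exactly what makes $\bigoplus_k J_k/J_k^2$ a globally defined graded locally free sheaf and hence makes the $\sProj$ an untwisted bundle. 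Thus once the local identification of $\sC$ with the symmetric algebra is in hand and is checked to be compatible with restriction (so that the local isomorphisms glue), applying $\sProj_Y$ to the global isomorphism $\sC \cong \Sym^\bullet_{\sO_Y}\bigoplus_k J_k/J_k^2$ yields the desired description of $\cY$, invoking \cite[Prop 5.1.4]{QR} to guarantee that $\sProj_Y$ of a genuine symmetric algebra on a graded locally free sheaf is a weighted projective bundle.
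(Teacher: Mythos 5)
Your proposal matches the paper's proof in its essentials: both construct the natural graded $\sO_Y$-algebra homomorphism $\Sym^\bullet_{\sO_Y}\bigoplus_{k=1}^m J_k/J_k^2 \to \sC = \bigoplus_{n\geq 0}\sJ_n/\sJ_{n+1}$, establish local freeness of each $J_k/J_k^2$ from regularity, show the map is an isomorphism by a local verification resting on the regular-sequence presentation in Assumption \ref{a2} (the paper invokes the morphism $\alpha$ of \cite[Sec 5.1]{QR}, which is the same associated-graded-is-polynomial input you use), and then pass to $\sProj_Y$. The only difference is logistical: the paper checks the isomorphism smooth-locally and therefore needs an fpqc-descent step to conclude, whereas your Zariski-local check (which Assumption \ref{a2} permits) makes descent unnecessary, since a globally defined sheaf map that is Zariski-locally an isomorphism is an isomorphism.
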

\begin{proof}
Since $V(J_k) \to X$ is regular, the sheaf $J_k/J_k^2$ is locally free on $V(J_k)$, hence on $Y$. Since $\cY = \sProj_X(\oplus_{n \geq 0} \sJ_n/\sJ_{n+1})$, we first define a $\Gm$-equivariant $\sO_Y$-module homomorphism
\[
\bigoplus_{k=1}^m J_k/J_k^2 \to \bigoplus_{n \geq 0} \sJ_n/\sJ_{n+1}
\]
as follows: $\sJ_{b_k}$ contains $J_k$  and $\sJ_{b_k+1}$ contains $J_{k}^2$ by definition of $\sJ$, so we can map $J_k$ to its natural image in $\sJ_{b_k}$. It induces a morphism of $\sO_Y$-algebras 
\begin{equation}\label{eq:regular-iso}R=\Sym^\bullet_{\sO_Y} \bigoplus_{k=1}^m J_k/J_k^2 \to \sC=\oplus_{n \geq 0} \sJ_n/\sJ_{n+1}.\end{equation}
Using Assumption \ref{a2} and the morphism $\alpha$ in \cite[Sec 5.1]{QR}, we see that \eqref{eq:regular-iso} is an isomorphism smooth locally. As a consequence, \(\Spec_Y(R)\setminus V(R_+)\) and \(\Spec_Y(\sC)\setminus V(\sC_+)\) are isomorphic smooth locally. Since being an isomorphism of schemes is fpqc local on the base \cite[Tag 02L4]{stacks-project}, and fpqc is finer than smooth topology by \cite[Tag 03O4]{stacks-project}, we have that \(\Spec_Y(R)\setminus V(R_+)\) and \(\Spec_Y(\sC)\setminus V(\sC_+)\) are isomorphic, and the lemma follows.
\end{proof}

Going forward, we will denote the Rees algebra by $\sI_\bullet$ when we are only assuming Assumption \ref{a1} holds (and this will be the case for many of our results). We will denote the Rees algebra by $\sJ_\bullet$ when we are assuming the existence of global ideals $J_i$ such that Assumption \ref{a2} holds.

\subsubsection{Chow rings }\label{S:chowring}

We recall the explicit descriptions of $A^*(\cY)$ and $A^*(\cX)$ given in \cite{AO}. We define
\[
P(t) := c_{top}^{\Gm}(\sN) = \prod_{a \in \ZZ>0} \be_a
\]
noting that the product makes sense since all but finitely many $\be_a$ are equal to 1. On the other hand, the total chern class of a twisted weighted vector bundle (such as $\Spec_Y(\sC)$) was defined in \cite[Def 3.9]{AO}. It follows from \cite[Prop 3.10]{AO} that this is equal to the total chern class of the dual of the associated weighted vector bundle (in this case, this dual is $\sN$).\footnote{
In a bit more detail, the ideal defining the zero section of $\Spec_Y(\sC) \to Y$ is $\sC_+$, so by definition the conormal bundle to the zero section is $\sC_+/\sC_+^2$, but this is precisely the associated weighted vector bundle $\sN^\vee$. It follows that the normal bundle to the zero section of $\Spec_Y(\sC) \to Y$ is $\sN$, but this normal bundle is computed explicitly in \cite[Prop 3.10]{AO} and from here one can see that it has the correct equivariant top chern class.
} Therefore by \cite[Thm 3.12]{AO} we have
\begin{equation}\label{eq:ed-chow}
A^*(\cY) = A^*(Y)[t] / P(t).
\end{equation}
We also point out that since $t$ is the first chern class of the topologically trivial line bundle on $Y$ of weight 1, we have (see e.g. \cite[Rem 3.2.4]{QR})
 \begin{equation}\label{eq:fact}
 \text{$t$ is the first chern class of $N_{\cY/\cX}^\vee$.}
 \end{equation}

From \cite[Thm 6.4]{AO} we get the following formula for the chow ring of $\cX$:
\begin{equation}\label{eq:bl-chow}
A^*(\cX) = A^*(Y)[t] \cdot t \oplus A^*(X) / \langle \;(\; (P(t)- P(0))\alpha, -i_*\alpha) \;\rangle_{\alpha \in A^*(Y)}.
\end{equation}
Here, the ring structure on $A^*(Y)[t]\cdot t \oplus A^*(X)$ is given by the rule
\[
\left(q_1(t), \beta_1\right) \cdot \left(q_2(t), \beta_2\right) = \left(q_1(t)q_2(t) + q_1(t)i^*\beta_2 + q_2(t)i^*\beta_1, \; \beta_1\beta_2\right)
\]
and $-t$ is equal to the fundamental class of $[\cY]$. The identification in \eqref{eq:bl-chow} is that $(q(t), \beta)$ maps to $-j_*[q_1(t)] + f^*\beta$, where $tq_1(t) = q(t)$ and $[q_1(t)]$ is the element of $A^*(\cY)$ defined by the polynomial $q_1(t)$ via the isomorphism \eqref{eq:ed-chow}.

\begin{lemma}\label{lem:restriction}
The restriction map $j^*: A^*(\cX) \to A^*(\cY)$ sends $(q(t), \beta)$ to the class $[q(t) + i^*\beta]$ in the quotient ring \eqref{eq:ed-chow}, where $q(t) + i^*\beta$ is viewed as a polynomial in $t$ with coefficients in $A^*(Y)$. 

\end{lemma}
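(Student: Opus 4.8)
The plan is to compute $j^*$ directly on the explicit representative supplied by the presentation \eqref{eq:bl-chow}. Recall that under that identification the class $(q(t),\beta)$ is the honest element $-j_*[q_1(t)] + f^*\beta$ of $A^*(\cX)$, where $q_1(t)$ is the unique polynomial with $t\,q_1(t)=q(t)$; this is well-posed because $q(t)$ lies in the summand $A^*(Y)[t]\cdot t$ and hence is divisible by $t$ in the polynomial ring $A^*(Y)[t]$. Since $j^*$ is a ring homomorphism, I would apply it termwise and write $j^*(q(t),\beta) = -j^*j_*[q_1(t)] + j^*f^*\beta$, then evaluate the two summands separately.

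For the first summand I would invoke the self-intersection formula for the Cartier divisor $j\colon \cY \hookrightarrow \cX$, namely $j^*j_*\xi = c_1(N_{\cY/\cX})\cdot \xi$ for $\xi \in A^*(\cY)$. The crucial input is \eqref{eq:fact}, which identifies $t = c_1(N^\vee_{\cY/\cX})$ inside $A^*(\cY) = A^*(Y)[t]/P(t)$, so that $c_1(N_{\cY/\cX}) = -t$. Hence $-j^*j_*[q_1(t)] = t\cdot [q_1(t)] = [t\,q_1(t)] = [q(t)]$, where the middle equality merely records that multiplication by $t$ in $A^*(\cY)$ is multiplication by the variable in the quotient \eqref{eq:ed-chow}. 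For the second summand I would use the commutativity of the square \eqref{eq:square}, which gives $f\circ j = i\circ g$ and hence $j^*f^* = g^*i^*$; since $g^*\colon A^*(Y)\to A^*(\cY)$ is exactly the structure map exhibiting \eqref{eq:ed-chow} as an $A^*(Y)$-algebra, it carries the class $i^*\beta \in A^*(Y)$ to the class $[i^*\beta]$ of the corresponding constant polynomial. Adding the two contributions yields $j^*(q(t),\beta) = [q(t)] + [i^*\beta] = [q(t)+i^*\beta]$, which is the assertion.

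I do not expect a genuine obstacle, as the computation is short. The one place demanding care is the sign in the self-intersection step, which rests entirely on reading $c_1(N_{\cY/\cX}) = -t$ off \eqref{eq:fact}; getting this wrong would flip the sign of the leading term and contradict the stated formula. I would also stress that there is nothing to verify regarding well-definedness on the quotient in \eqref{eq:bl-chow}: the computation takes place among honest classes in $A^*(\cX)$ and $j^*$ is a genuine homomorphism, so $j^*$ of any relator is automatically zero. Nonetheless, confirming directly that the formula $[q(t)+i^*\beta]$ annihilates the defining relators of \eqref{eq:bl-chow} — using the relation $P(t)=0$ in $A^*(\cY)$ together with the self-intersection identity for $i\colon Y\hookrightarrow X$ — is a useful way to double-check the sign conventions.
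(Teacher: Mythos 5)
Your proof is correct and follows essentially the same route as the paper's: both write the class as $-j_*[q_1(t)]+f^*\beta$, evaluate $-j^*j_*[q_1(t)]=[t\,q_1(t)]=[q(t)]$ via the self-intersection formula combined with \eqref{eq:fact}, and handle $j^*f^*\beta=g^*i^*\beta$ using commutativity of \eqref{eq:square}. The only difference is cosmetic: the paper adds a remark that the self-intersection step can be justified by working $\Gm$-equivariantly with the lift $\widetilde{j}\colon \wY\hookrightarrow\wX$, a point your argument leaves implicit.
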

\begin{proof}
This follows from commutativity of \eqref{eq:square} and the fact that by the self intersection formula and \eqref{eq:fact}, 
\[-j^*j_*[q_1(t)] = -[q_1(t)]\cdot c_1(N_{\cY/\cX}) = [tq_1(t)].\] 
We point out that this computation may equivalently be done in the Chow ring $A^*_{\Gm}(\wY)$, after replacing $j$ by its lift $\widetilde{j}: \wY \hookrightarrow \wX$.
\end{proof}

\subsection{Description of the stringy chow ring}

If $H \subseteq \Gm$ is a nontrivial subgroup, the fixed locus $\wX^H$ is necessarily contained in $\wY$ (since $\cX \to X$ is an isomorphism outside of $Y$ and $X$ has trivial isotropy groups). We note that the normal bundle $N_{[\wX^H/G]/\cY}$ lifts to the $\Gm$-equivariant normal bundle $N_{\wX^H/\wY}$.

\begin{lemma}\label{lem:key}
Assume $H$ is generated by the images of two homomorphisms $\zeta: \mu_{r_1} \to \Gm$ and $\eta: \mu_{r_2} \to \Gm$ and contains the image of a third nontrivial homomorphism $\gamma: \mu_{r_3} \to \Gm$. 
Then $[\wX^H/\Gm]$ is a twisted weighted projective bundle over $Y$, in fact a subbundle of $\cY$, and
\[
A^*([\wX^H/\Gm]) = \frac{A^*(Y)[t]}{\prod_{a \in \ZZ_{>0},\;\zeta^a=\eta^a=1} \be_a}. \]
Moreover we have $H$-equivariant normal bundles given by
\[N_{\wX^H/\wY} = \bigoplus_{\substack{a \in \ZZ_{>0}\\\zeta^a\text{ or }\eta^a \neq 1}} \sN_a  \quad \quad \quad \quad \quad \quad N_{\wX^H/\wX^{\gamma}} = \bigoplus_{\substack{a \in \ZZ_{>0},\; \zeta^a\text{ or }\eta^a \neq 1 \\ \gamma^a=1}} \sN_a.
\]
where use bundles on $Y$ to denote their restrictions to $\wX^H$ and $H$ acts on $\sN_a$ through the character corresponding to the integer $a$.
\end{lemma}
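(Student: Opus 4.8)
The plan is to reduce everything to the $\Gm$-weight decomposition $\sN = \bigoplus_a \sN_a$ of the weighted normal bundle together with the smooth-local description of $\wY$ as a punctured total space. Recall from Section \ref{S:wblow} that $\wY = \Spec_Y(\sC)\setminus V(\sC_+)$, that $\sC$ is smooth-locally isomorphic to $\Sym_{\sO_Y}(\sN^\vee)$ with $\sN_a^\vee$ placed in degree $a$, and that a subgroup $H \subseteq \Gm$ acts on $\sN_a$ through the character $h \mapsto h^a$; since $H$ is nontrivial we already know $\wX^H \subseteq \wY$, so it suffices to analyze the $H$-fixed locus inside this total space. Working smooth-locally, a point over $y \in Y$ is a vector $v = (v_a)$ with $v_a \in (\sN_a)_y$, and $h \in H$ sends $v_a$ to $h^a v_a$, so $v$ is $H$-fixed precisely when $v_a = 0$ for every $a$ on which $H$ acts nontrivially. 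Because $H$ is generated by the images of $\zeta$ and $\eta$, the integer $a$ gives the trivial character of $H$ if and only if $\zeta^a = \eta^a = 1$; equivalently, the ideal cutting out $\wX^H$ is generated by the non-invariant degree-one generators $\sN_a^\vee$ with $\zeta^a \neq 1$ or $\eta^a \neq 1$, whence $\sC/I = \Sym_{\sO_Y}(\bigoplus_{a:\,\zeta^a = \eta^a = 1}\sN_a^\vee)$. Thus smooth-locally $\wX^H$ is the punctured total space of the subbundle $\sN' := \bigoplus_{a:\,\zeta^a = \eta^a = 1}\sN_a$; granting the descent discussed below, $[\wX^H/\Gm]$ is globally a twisted weighted projective subbundle of $\cY$ with associated weighted normal bundle $\sN'$.

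The Chow ring formula is then immediate: $[\wX^H/\Gm]$ is a twisted weighted projective bundle of exactly the type to which \eqref{eq:ed-chow} (i.e.\ \cite[Thm 3.12]{AO}) applies, now with $\sN'$ in place of $\sN$. Since the variable $t$ is in both cases the $\Gm$-equivariant first chern class of the weight-one topologically trivial line bundle on $Y$ (equivalently, the class of the conormal bundle of the subbundle in its ambient total space, as in \eqref{eq:fact}), the formula reads $A^*([\wX^H/\Gm]) = A^*(Y)[t]/P'(t)$ with $P'(t) = c_{top}^{\Gm}(\sN') = \prod_{a:\,\zeta^a = \eta^a = 1}\be_a$, as claimed.

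For the normal bundles I would use that $\wX^H$ and $\wY$ are, smooth-locally, the punctured total spaces of $\sN'$ and $\sN$, so the normal sequence identifies $N_{\wX^H/\wY}$ with the complementary summand $\bigoplus_{a:\,\zeta^a \neq 1 \text{ or } \eta^a \neq 1}\sN_a$, carrying its evident weight-$a$ action of $H$. For $N_{\wX^H/\wX^{\gamma}}$, applying the same fixed-locus computation to the image of $\gamma$ gives that $\wX^{\gamma}$ is the punctured total space of $\bigoplus_{a:\,\gamma^a = 1}\sN_a$; since $\gamma$ factors through $H$, any $a$ with $\zeta^a = \eta^a = 1$ also satisfies $\gamma^a = 1$, so $\wX^H \subseteq \wX^{\gamma}$ and the normal bundle is the intermediate summand $\bigoplus_{a:\,\gamma^a = 1,\;\zeta^a \neq 1 \text{ or } \eta^a \neq 1}\sN_a$.

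The main obstacle is the passage from the smooth-local computation to the global statement in the twisted setting of Assumption \ref{a1} (as opposed to Assumption \ref{a2}, where $\sC \cong \Sym_{\sO_Y}(\sN^\vee)$ holds globally). The point to be careful about is that although the weighted-affine-bundle structure of $\Spec_Y(\sC)$ is only smooth-local, the weight decomposition $\sN = \bigoplus_a \sN_a$ and the resulting splitting of the degree-one generators are defined globally on $Y$; hence the $H$-fixed locus and the three bundles above are patched from the local pieces by fpqc descent, exactly as in the proof of Lemma \ref{lem:exceptional}.
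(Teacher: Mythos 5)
Your proposal is correct and takes essentially the same approach as the paper: the paper likewise realizes $\wY$ as the complement of the zero section in $E = \Spec_Y(\sC)$, identifies the $H$-fixed locus locally (via Assumption \ref{a1}) as the vanishing of the weight summands $\sN_a$ with $\zeta^a$ or $\eta^a \neq 1$, applies \cite[Thm 3.12]{AO} for the Chow ring, and computes $N_{\wX^H/\wX^{\gamma}}$ by rerunning the argument with $\sC$ replaced by $\sC/\sI^{H'}$, where $H'$ is generated by the image of $\gamma$. The only organizational difference is that the paper makes your descent step precise by introducing the canonical conormal exact sequence of $Y \xrightarrow{s^H} E^H \to E$, so that all the claimed identifications are restrictions of globally defined graded maps which are then checked to be isomorphisms Zariski-locally on $X$ -- exactly the canonical-map-plus-local-verification pattern your final paragraph gestures at.
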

\begin{proof}
Let $E$ denote the twisted weighted vector bundle $\Spec_Y(\sC) \to Y$, let $s: Y \to E$ be the zero section, and recall that $\wY \subset E$ is the complement of the image of $s$. The $\ZZ$-grading on $\sC$ induces a $\Gm$-action on $E$ and we let $E^H$ denote the fixed locus (so $\wX^H = E^H \setminus Y$). The zero section $s: Y \to E$ is $\Gm$-fixed, hence factors as $Y\xrightarrow{s^H} E^H \to E$. Since $Y$ and $E^H$ are smooth, the map $s^H$ is l.c.i. \cite[Tag 0E9K]{stacks-project}, hence by \cite[Tag 06BA]{stacks-project} we have an exact sequence of (locally free) conormal sheaves
\[
0 \to \sI^H / (\sI^H)^2 \to \sC_+ / (\sC_+)^2 \to (\sC_+/\sI^H)/(\sC_+/\sI^H)^2 \to 0
\]
where $\sI^H$ is the sheaf of ideals in $\sC$ such that $E^H = \Spec_Y(\sC/\sI^H)$. We note that $\sI^H$ restricts to the ideal sheaf of $E^H \subseteq E$ and $\sC_+$ restricts to the ideal sheaf of $s: Y \hookrightarrow E$. 
We have the following conclusions:
\begin{itemize}
\item[(i)] $\wX^H$ is the complement of the zero section in $E^H = \Spec_Y(\sC/\sI^H)$. That $E^H$ is a twisted weighted vector bundle, and hence that $[\wX^H/\Gm]$ is a twisted weighted projective bundle, may be verified affine locally on $X$.
\item[(ii)] The conormal sheaf $N_{\wX^H/\wY}^\vee = \sI^H/(\sI^H)^2|_{\wX^H}$ is canonically a subsheaf of $\sN^\vee|_{\wX^H}$. That this subsheaf is the direct sum of the weight subbundles of weights $a$ satisfying $\zeta^a$ or $\eta^a \neq 1$ may be verified affine locally on $X$.\footnote{We note that the weights of $\sN^\vee$ are negative, but the condition ``$\zeta^a$ or $\eta^a \neq 1$'' holds for $a$ if and only if it holds for $-a$, and $\sN_a = (\sN_{-a}^\vee)^\vee$. }
\item[(iii)] Assuming $E^H = \Spec_Y(\sC/\sI^H)$ is indeed a twisted weighted vector bundle, its associated weighted vector bundle is $(\sC_+/\sI^H)/(\sC_+/\sI^H)^2$ and hence a quotient of $\sN^\vee$. That this quotient is the direct sum of subbundles of weight $a$ satisfying $\zeta^a = \eta^a = 1$ may be verified affine locally on $X$. From here the formula for $A^*([\wX^H/G])$ follows from \cite[Thm 3.12]{AO}.
\end{itemize}

It follows that the proof can be finished by working on a Zariski neighborhood of $X$ where by Assumption \ref{a1} we may write $\sI_\bullet = (x_1, a_1) + \ldots + (x_m, a_m)$ for some regular sequence $x_1, \ldots, x_m$ and positive integers $a_i$. Here we have $\sC = \sO_X[x_1, \ldots, x_m]$ and $\sI^H$ is the ideal $\langle x_i \mid \zeta^{a_i}\text{ or }\eta^{a_i}\neq 1 \rangle$. The required local claims listed in (i)-(iii) now follow.

The normal bundle $N_{\wX^H/\wX^\gamma}$ can be computed by a similar argument, replacing $\sC$ above by $\sC/\sI^{H'}$ where $H'$ is the subgroup of $G$ generated by the image of $\gamma$. (Note that $\Spec_Y(\sC/\sI^{H'})$ is a twisted weighted vector bundle with known associated weighted vector bundle by the part of the lemma we have already proved, taking $H$ to be the subgroup generated by $\gamma$ and the trivial homomorphism $\mu_1 \to \Gm$.)
\end{proof}

We now compute each of the ingredients in Section \ref{s:stringy} for the weighted blowup $\cX$.

\subsubsection{Sectors}\label{S:sectors} 
For $\zeta : \mu_r \to \Gm$, let $\I_\mu(\zeta)$ (resp. $\II_\mu(\zeta, \eta)$) denote the corresponding sector of $\I_\mu(\cX)$ (resp. $\II_\mu(\cX)$). If $\zeta$ and $\eta$ are trivial then $\I_\mu(\zeta) = \II_\mu(\zeta, \eta) = \cX$. If $\zeta$ is not trivial then by Lemma \ref{lem:key} the stacks $\I_\mu(\zeta)$ and $\II_\mu(\zeta, \eta)$ are twisted weighted projective bundles over $Y$, in fact subbundles of $\cY$. In this case, $\I_\mu(\zeta)$ is connected if and only if $Y$ is connected.
In this case (when $\zeta \neq 1$) we also have ring isomorphisms 
        \[
            A^*(\I_\mu(\zeta))=\frac{A^*(Y)[t]}{\prod_{\zeta^{a_k}=1} \mathbf e_k};
        \quad \quad \quad \quad
            A^*(\II_\mu(\zeta,\eta))= \frac{A^*(Y)[t]}{\prod_{\zeta^{a_k}=\eta^{a_k}=1} \mathbf e_k}
        \]
where the products are over all $a \in \ZZ_{>0}$. From Lemma \ref{lem:key} we also get descriptions of normal bundles:
\begin{equation}\label{eq:N}
N_{\I_\mu(\zeta)/\cY} = \bigoplus_{\zeta^{a} \neq 1} \sN_a, \quad \quad \quad \quad N_{\II_\mu(\zeta, \eta)/\cY} = \bigoplus_{\zeta^{a} \;\text{or}\; \eta^{a} \neq 1}\sN_a
\end{equation}
where the sums are taken over all $a \in \ZZ_{>0}.$

\begin{example}\label{ex:special}
Suppose $\sI_\bullet = \sJ_\bullet$ is of the form discussed in Section \ref{sec:special} and Assumption \ref{a2} holds. It follows from Lemma \ref{lem:exceptional} that when $\zeta$ is nontrivial, the sectors $\I_\mu(\zeta)$ and $\II_\mu(\zeta, \eta)$ are (untwisted) weighted projective bundles over $Y$ given explicitly by 
\[
\I_\mu(\zeta) = \sProj_Y\left(\Sym^\bullet_{\sO_Y}\bigoplus_{\zeta^{b_k}=1 } J_k/J_k^2\right), \quad \quad \II_\mu(\zeta, \eta) = \sProj_Y\left(\Sym^\bullet_{\sO_Y}\bigoplus_{\zeta^{b_k}=\eta^{b_k}=1 } J_k/J_k^2\right).
\]
Here the sums are taken over $k \in \{1, \ldots, m\}$ satisfying the displayed conditions; e.g., the second sum is over all $k$ both $\zeta$ and $\eta$ factor through $\mu_{b_k} \subset \Gm$. 

The normal bundles \eqref{eq:N} can also be made more explicit in this case: indeed, the weight subbundle $\sN_a$ has positive rank if and only if $a$ is equal to one of the degrees $b_1, \ldots, b_m$, and if $a = b_k$ we have
\[
(N_{\I_\mu(\zeta)/\cY})_a = (J_k/J_k^2)^\vee.
\]
\end{example}

\subsubsection{Age}
The age at any point of $\I_\mu(1)$ is zero. 
To compute the age at a point of $\I_\mu(\zeta)$ for some nontrivial $\zeta$, we use 
the short exact sequence 
\[
	0\longrightarrow T_{\I_\mu(\zeta)}\longrightarrow  T_{\cX}|_{\I_\mu(\zeta)} \longrightarrow N_{\I_\mu(\zeta)/\cX}\longrightarrow 0. 
\]
Since the action of $\zeta$ on a fiber of $T_{\I_\mu(\zeta)}$ is trivial, it follows that
 it suffices to consider the action of \(\zeta\) on a fiber of \(N_{\I_\mu(\zeta)/\cX}\). From the inclusions $\I_\mu(\zeta) \subset \cY \subset \cX$ we have the short exact sequence
\begin{eqnarray*}
	0\longrightarrow N_{\I_\mu(\zeta)/\cY}\longrightarrow N_{\I_\mu(\zeta)/\cX}\longrightarrow {N_{\cY/\cX}}|_{\I_\mu(\zeta)}
	\longrightarrow 0.
\end{eqnarray*}
By \eqref{eq:fact} the bundle $N_{\cY/\cX}|_{I_\mu(\zeta)}$ has $\Gm$-weight -1.
From this and the weight decompositon \eqref{eq:N} of $N_{\I_\mu(\zeta)/\cY}$ it follows that the age at a point $\bar x$ of \(\I_\mu(\zeta)\) is 
\begin{eqnarray}\label{eq:age}
    \mathrm{age}(\bar x, \zeta)={\arg{\zeta}^{-1}} + \sum_{\zeta^a \neq 1}r(\bar x, \sN_a){\arg \zeta^{a}},
\end{eqnarray}
where $r(\bar x, \sN_a)$ is the rank of $\sN_a$ at $\bar x$, and for a homomorphism $\xi: \mu_r \to \Gm$, the rational number $\arg \xi$ was defined in Section \ref{S:ring}.

\subsubsection{Product}
\label{sec:prod}
Let $Z$ be the set of injective homomorphisms $\zeta:\mu_r \to \Gm$ such that $r$ is not equal to 1 and $r$ divides $a$ for some positive integer $a$ arising as a local weight of $\sI_\bullet$; i.e., as one of the integers $a_i$ in Assumption \ref{a1}. We have an isomorphism of $\ZZ$-modules
\begin{align}\label{eq:st-grp}
    A^*_{st}(\cX) = A^*(\cX)e_1 \oplus \left(\bigoplus_{{\zeta \in Z} } \frac{A^*(Y)[t]}{\prod_{\zeta^{a}=1} \mathbf e_a} e_\zeta\right).
\end{align} 
If $\alpha$ is represented by a connected cycle on $\cX$ or $Y$, the degree of $\alpha t^ne_\zeta$ is $\deg(\alpha) +n+ \mathrm{age}(\bar x, \zeta)$, where $\bar x$ is a point of a cycle representing $\alpha$
(note $\mathrm{age}(\zeta)$ is given in \eqref{eq:age}).  

\begin{example}
Suppose $\sI_\bullet = \sJ_\bullet$ is of the form discussed in Section \ref{sec:special} and Assumption \ref{a2} holds. Then for fixed $\zeta$, the age at a point $\bar x$ of $\I_\mu(\zeta)$ is independent of $\bar x$, and indeed the age at any point is given by 
\[
    \mathrm{age}(\zeta)={\arg{\zeta}^{-1}}+ \sum_{\zeta^{b_k} \neq 
 1}r_k{\arg \zeta^{b_k}}
\]
where $r_k$ is the codimension of $V(J_k)$. In this case an element $\alpha t^n e_\zeta$ of $A^*_{st}(\cX)$ is homogeneous of degree $\deg(\alpha) +n+ \mathrm{age}(\zeta).$
\end{example}

To describe the product on $A^*_{st}(\cX)$, we introduce some additional notation.
\begin{itemize}
\item We have already defined $\be_a$ for $a > 0$. For $a \in \ZZ_{\leq 0}$ set $\be_{-1}=-t$ and all other $\be_a=1$.
Note that with this definition, in \eqref{eq:st-grp} it is equivalent to take the product over all $a \in \ZZ_{>0}$ or over all $a \in \ZZ$.
\item Define $A^*(Y, X)$ to be the $\ZZ$-module $A^*(Y)\oplus A^*(X)$ with product given by
\[
(\alpha_1, \beta_1) \cdot (\alpha_2, \beta_2) = (\alpha_1\alpha_2 + \alpha_1 i^*\beta_2 + \alpha_2 i^*\beta_1, \beta_1\beta_2).
\]
\end{itemize}
The polynomial ring $A^*(Y, X)[t]$ contains both $A^*(Y)[t]$ and the ring $A^*(Y)[t] \cdot t \oplus A^*(X)$ appearing in the definition \eqref{eq:bl-chow} of $A^*(\cX)$ as subrings, so $A^*(\I_\mu(\zeta))$ and $A^*(\II_\mu(\zeta, \eta))$ are quotients of subrings of $A^*(Y, X)$ for all $\zeta$ and $\eta$. The key point is the following:
\begin{equation}
\begin{gathered}
\text{We may write any $\alpha \in A^*(\I_\mu(\zeta))$ or $A^*(\II_\mu(\zeta, \eta))$ as $[a(t)]$ }\\
\text{for some $a(t)$ in (an appropriate subring of) $A^*(Y, X)[t]$.}
\end{gathered}
\end{equation}

\begin{proposition}\label{prop:prod}
Given $\alpha \in A^*(\I_\mu(\zeta))$ and $\beta \in A^*(I_\mu(\eta))$, let $a(t), b(t) \in A^*(Y, X)[t]$ be polynomials such that $[a(t)] = \alpha$ and $[b(t)]=\beta$, respectively. Then 
\begin{align}\label{eq:mult}
    \alpha e_\zeta \star \beta e_\eta = [a(t)b(t)\, C_{\zeta \eta}(t) ]\; e_{\zeta \eta} \quad \quad \text{where} \quad  \quad C_{\zeta \eta}(t) = \prod_{\substack{a \in \ZZ,\;\zeta^a\text{ or }\eta^a \neq 1\\\arg \zeta^{a} + \arg \eta^{a} \geq 1}} \be_a, 
\end{align}
$\zeta \eta$ is defined in Definition \ref{def:zetaeta}, and $e_{\zeta \eta}$ is defined to be zero if $\I_\mu(\zeta \eta)$ is empty. In particular, the polynomial $a(t)b(t)C_{\zeta \eta}(t)$ represents a class in $A^*(\I_\mu(\zeta \eta))$.
\end{proposition}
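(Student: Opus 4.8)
The plan is to start from the general product rule \eqref{eq:starprod} restricted to the sector $\II_\mu(\zeta,\eta)=[\wX^H/\Gm]$ and to identify each of its ingredients with an operation on polynomial representatives. By Theorem \ref{thm:moduli} the evaluation maps $ev_1,ev_2,\overline{ev}_3\colon[\wX^H/\Gm]\to\I_\mu(\cX)$ are the subbundle inclusions of $\II_\mu(\zeta,\eta)$ into $\I_\mu(\zeta)$, $\I_\mu(\eta)$ and $\I_\mu(\zeta\eta)$, respectively. Thus I must establish three things: (i) $ev_1^*\alpha\cup ev_2^*\beta$ is represented by $a(t)b(t)$; (ii) the obstruction class $Ob(\zeta,\eta)$ of \eqref{eq:ob} equals $\prod_{a,\ \arg\zeta^a+\arg\eta^a>1}\be_a$; and (iii) the Gysin pushforward $\overline{ev}_{3,*}$ contributes exactly the boundary factors needed to upgrade the strict inequality to $\arg\zeta^a+\arg\eta^a\ge1$, producing $C_{\zeta\eta}(t)$.

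For (i) I would introduce the coefficientwise ring homomorphism $A^*(Y,X)[t]\to A^*(Y)[t]$ induced by $(\alpha,\beta)\mapsto\alpha+i^*\beta$ (a direct check shows this is multiplicative for the twisted product on $A^*(Y,X)$), followed by the quotient onto the relevant presentation of $A^*(\II_\mu(\zeta,\eta))$ from Section \ref{S:sectors}. The key point is that this composite realizes each restriction map $ev_\bullet^*$ uniformly: for $\zeta\neq1$ it is the evident surjection of quotient rings of $A^*(Y)[t]$ coming from the inclusion of weight-index sets, while for $\zeta=1$, where $\I_\mu(\zeta)=\cX$, it is exactly the restriction computed in Lemma \ref{lem:restriction}, namely $(q(t),\beta)\mapsto[q(t)+i^*\beta]$. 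Since every map in sight is a ring homomorphism, the cup product $ev_1^*\alpha\cup ev_2^*\beta$ is then represented by $a(t)b(t)$.

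For (ii) I would translate the character product in Proposition \ref{prop:obstruction} into a weight product. Writing $H=\mu_\ell$, the normal bundle $N_{\wX^H/\wX}$ splits as a $\Gm$-representation into the positive-weight pieces $\sN_a$ coming from $N_{\wX^H/\wY}$ together with the weight $-1$ line bundle $N_{\wY/\wX}$, whose $\Gm$-equivariant top Chern class is $\be_{-1}=-t$ by \eqref{eq:fact}. Each character $\theta$ indexing \eqref{eq:ob} collects the weights $a$ with $a\equiv\theta\pmod\ell$, and since $\arg\theta(\zeta)+\arg\theta(\eta)=\arg\zeta^a+\arg\eta^a$ depends only on $a\bmod\ell$, the product of equivariant top Chern classes collapses to $Ob(\zeta,\eta)=\prod_{a\in\ZZ,\ \arg\zeta^a+\arg\eta^a>1}\be_a$, the strict inequality already forcing $\zeta^a$ or $\eta^a\neq1$.

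Finally, for (iii) I would compute $\overline{ev}_{3,*}$ by the self-intersection formula: since $\overline{ev}_3^*$ is the surjection of quotient rings above, every class is a pullback, so $\overline{ev}_{3,*}(\overline{ev}_3^*\delta)=\delta\cup\overline{ev}_{3,*}(1)$, where $\overline{ev}_{3,*}(1)$ is the fundamental class of $\II_\mu(\zeta,\eta)$ inside $\I_\mu(\zeta\eta)$. By Lemma \ref{lem:key} this class is represented by the polynomial $\prod\be_a$ over the weights of $N_{\II_\mu(\zeta,\eta)/\I_\mu(\zeta\eta)}=N_{\wX^H/\wX^{\zeta\eta}}$, i.e.\ the $a>0$ with $(\zeta\eta)^a=1$ and $\zeta^a$ or $\eta^a\neq1$; these are precisely the indices with $\arg\zeta^a+\arg\eta^a=1$. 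Multiplying by $Ob(\zeta,\eta)$ upgrades $>1$ to $\ge1$ and yields $C_{\zeta\eta}(t)$, whence $\alpha e_\zeta\star\beta e_\eta=[a(t)b(t)C_{\zeta\eta}(t)]\,e_{\zeta\eta}$; that this polynomial represents a class in $A^*(\I_\mu(\zeta\eta))$ is automatic, being a pushforward image. I expect the main obstacle to be the boundary bookkeeping at $\arg\zeta^a+\arg\eta^a=1$: in particular the weight $-1$ contribution, which the obstruction (strict inequality) omits but $C_{\zeta\eta}$ retains exactly in the degenerate case $\zeta\eta=1$, where the third sector is all of $\cX$ rather than a subbundle of $\cY$, so that $N_{\cY/\cX}$ enters the normal bundle of $\overline{ev}_3$ and supplies the missing factor $\be_{-1}=-t$.
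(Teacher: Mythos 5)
Your outline follows the same route as the paper: identify the cup product with polynomial multiplication via the twisted ring $A^*(Y,X)[t]$, reduce the obstruction class of Proposition \ref{prop:obstruction} to the weight product $\prod_{\arg\zeta^a+\arg\eta^a>1}\be_a$ (your points (i) and (ii) are correct and match the paper's computation, including the collapse of $H$-characters to $\Gm$-weights and the observation that strict inequality forces $\zeta^a$ or $\eta^a\neq 1$), and then produce the boundary factors $\arg\zeta^a+\arg\eta^a=1$ from the pushforward $\overline{ev}_{3,*}$. However, your step (iii) has two problems. First, a minor one: the self-intersection formula only gives $\overline{ev}_3^*\overline{ev}_{3,*}(1)=c_{top}(N)$, which pins down $\overline{ev}_{3,*}(1)$ only modulo $\ker \overline{ev}_3^*$ --- and that kernel is nonzero precisely when the correction factor $n(t)$ is nontrivial, i.e.\ exactly in the cases that matter. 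The paper fixes this (Lemma \ref{lem:push}) by lifting to the $\Gm$-equivariant Chow rings of the total spaces of the weighted vector bundles, where pullback \emph{is} an isomorphism (the identity on $A^*(Y)[t]$), and then descending via the open restriction to the complements of the zero sections.

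Second, and more seriously, your uniform projection-formula argument breaks down in the case $\zeta\eta=1$, $\zeta\neq 1$. There you have $\I_\mu(\zeta\eta)=\cX$ while $\II_\mu(\zeta,\zeta^{-1})\subseteq\cY$, and the restriction $\overline{ev}_3^*:A^*(\cX)\to A^*(\II_\mu(\zeta,\zeta^{-1}))$ is \emph{not} surjective in general --- by the paper's own Lemma \ref{lem:classic} this surjectivity is equivalent to surjectivity of $i^*:A^*(X)\to A^*(Y)$, which is not assumed. Concretely, the polynomial $a(t)b(t)\,Ob(\zeta,\eta)(t)$ has constant term in $A^*(Y)$, so it does not even represent a class on $\cX$, and ``every class is a pullback'' is false; the projection formula therefore cannot be applied as you state it. You correctly name the missing factor $\be_{-1}=-t$ and its geometric source $N_{\cY/\cX}$, but naming it is not a mechanism for computing the pushforward of non-pullback classes. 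The paper's resolution is to factor $\overline{ev}_3 = j\circ\iota$ with $\iota:\II_\mu(\zeta,\zeta^{-1})\hookrightarrow\cY$ and $j:\cY\hookrightarrow\cX$: the pushforward $\iota_*$ is computed as in the case $\zeta\eta\neq 1$, and $j_*$ is \emph{explicitly} multiplication by $-t$ on polynomial representatives, a fact read off from the identification \eqref{eq:bl-chow} (where $(q(t),\beta)\mapsto -j_*[q_1(t)]+f^*\beta$ with $tq_1(t)=q(t)$), not from the projection formula. This factor of $-t$ is also what makes the constant term of $a(t)b(t)C_{\zeta\eta}(t)$ land in $A^*(X)$ (in fact vanish), so that the final polynomial genuinely defines a class on $\cX=\I_\mu(\zeta\eta)$; without it, the last assertion of the Proposition would not even make sense in this case.
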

\begin{proof}
By definition, we have
\[
\alpha e_\zeta \star \beta e_\eta = \overline{ev}_{3, *}(\, ev_1^*\alpha \cup ev_2^*\beta \cup Ob(\zeta, \eta) \, ) e_{\zeta \eta}
\]
where a general formula for $Ob(\zeta, \eta)$ is given by the right hand side of \eqref{eq:ob}. We start by making this formula more explicit for weighted blowups. Let $H \subset \Gm$ be the subgroup generated by the images of $\zeta$ and $\eta$. 

When $\zeta = \eta = 1$, the subgroup $H$ is trivial and we have $\wX^H = \wX$ so $Ob(\zeta, \eta)=[1]$. When $\zeta$ or $\eta$ is not trivial, we have $\II_\mu(\zeta, \eta) \subseteq \cY$. Letting $\wY \subset \wX$ be the preimage of $\cY \subset \cX$, the regular inclusions $\wX^H \hookrightarrow \wY \hookrightarrow \wX$ lead to a short exact sequence
\[
0 \to N_{\wX^H / \wY} \to N_{\wX^H/\wX} \to N_{\wY/\wX}|_{\wX^H} \to 0. 
\]
There is a non-injective map from integers $a$ to characters of $H \subset \Gm$. Using the integer $a$ to denote the character it induces on $H$, we see from the short exact sequence that
\[
c^{\Gm}_{top}((N_{\wX^H/\wX})_a) = c_{top}^{\Gm}((N_{\wX^H/\wY})_a)c_{top}^{\Gm}((N_{\wY/\wX})_a).
\]
From \eqref{eq:N} we have that $(N_{\wX^H/\wY})_a = \sN_a$ and in this case
\[
c_{top}^{\Gm}((N_{\wX^H/\wY})_{a}) = \be_a.
\] Moreover $(N_{\wY/\wX})_a$ has rank zero unless $a=-1$, and in this case 
\[
c_{top}^{\Gm}((N_{\wY/\wX})_{-1}) = -t = \be_{-1}.
\]
It follows from \eqref{eq:ob} that
\begin{equation}\label{eq:obpoly}
Ob(\zeta, \eta) = \left[\prod_{a \in \ZZ,\;\zeta^a\text{ or }\eta^a \neq 1,\; \arg \zeta^{a} + \arg \eta^{a} > 1} \be_a. \right]
\end{equation}
This concludes our computation of $Ob(\zeta, \eta)$. 

Returning to general $\zeta$ and $\eta$, to finish the computation of the product $\star$ we use the fact that if $\iota: \II_\mu(\zeta, \eta) \to \I_\mu(\zeta)$ is the inclusion and $\alpha = [a(t)]$ in $A^*(I_\mu(\zeta))$, then $\iota^*\alpha = [a(t) \cdot e_{\zeta, \eta}]$ in $A^*(\II_\mu(\zeta, \eta))$, where $e_{\zeta, \eta}$ is the (constant) polynomial representing the fundamental class of $\II_\mu(\zeta, \eta)$. In other words, $e_{\zeta, \eta}$ is $(1, 0)$ in the $\ZZ$-module $A^*(Y, X) = A^*(Y) \oplus A^*(X)$ if $\II_\mu(\zeta, \eta) \subseteq \cY$ and it is $(0, 1)$ if $\II_\mu(\zeta, \eta) = \cX$ (i.e., if $\zeta = \eta = 1$).
We discuss three cases.

\textit{Case $\zeta = \eta = 1$.} In this case we have $\II_\mu(\zeta, \eta) = \I_\mu(\zeta) = \I_\mu(\eta) = \I_\mu(\zeta \eta) = \cX$, the maps $ev_1, ev_2$, and $\overline{ev_3}$ are all the identity, and both $Ob(\zeta, \eta)$ and $C_{\zeta \eta}$ are 1. 

\textit{Case $\zeta\eta \neq 1$.} 
In this case $\II_\mu(\zeta, \eta)$ is a weighted projective subbundle of the weighted projective bundle $\I_\mu(\zeta \eta)$ over $Y$ and $\overline{ev}_3: \II_\mu(\zeta, \eta) \to \I_\mu(\zeta \eta)$ is the inclusion. Identifying $Ob(\zeta, \eta)$ with the polynomial \eqref{eq:obpoly} in $t$, we have
\[
\alpha e_\zeta \star \beta e_\eta = \overline{ev}_{3, *}([a(t)b(t)Ob(\zeta, \eta)]\, e_{\zeta, \eta}) = [a(t)b(t)Ob(\zeta, \eta)]\,\overline{ev}_{3, *}(e_{\zeta, \eta})
\]
by the projection formula. This is because $\I_\mu(\zeta\eta) \subseteq \cY$ implies $a(t)b(t)Ob(\zeta, \eta)$ represents a class on $\I_\mu(\zeta \eta)$ pulling back to $[a(t)b(t)Ob(\zeta, \eta)]$ on $\II_\mu(\zeta, \eta)$.  For example, when $\zeta=1$ and $\eta \neq 1$ the constant term of $a(t)$ is $a_0 \in A^*(X)$, the constant term of $b(t)$ is $b_0 \in A^*(Y)$, and the constant term of $Ob(\zeta , \eta)(t)$ is $1 \in A^*(Y)$ (in fact this is all of $Ob(\zeta, \eta)(t)$). According to the definition of product on $A^*(Y, X)$ the constant term of $a(t)b(t)Ob(\zeta, \eta)(t)$ is $i^* a_0 \cup b_0$, an element of $A^*(Y)$. 

To compute $\overline{ev}_{3, *}(e_{\zeta, \eta})$, observe that $\II_\mu(\zeta, \eta) \hookrightarrow \I_\mu(\zeta \eta)$ is an inclusion of twisted weighted projective bundles over $Y$, and by Lemma \ref{lem:key} the top chern class of the normal bundle to the inclusion of associated twisted weighted vector bundles is given by the polynomial
\begin{equation}\label{eq:epoly}
n(t) = \prod_{a \in \ZZ,\;\zeta^a\text{ or } \eta^a \neq 1,\; \arg \zeta^{a} + \arg \eta^{a} = 1} \be_a.
\end{equation}
We are using that $(\zeta \eta)^a=1$ if and only if $\arg \zeta^a + \arg \eta^a=1$ (since at least one of $\zeta$ or $\eta$ is nontrivial). We may take the product over $a \in \ZZ$ since $\arg \zeta^{-1} + \arg \eta^{-1} \neq 1$ (since $\zeta \eta \neq 1$). By Lemma \ref{lem:push} it follows that $\overline{ev}_{3, *}(e_{\zeta, \eta}) = [n(t)]$. Now
the product of \eqref{eq:obpoly} and \eqref{eq:epoly} is precisely $[C_{\zeta \eta}(t)]$.

\textit{Case $\zeta\eta=1$ but $\zeta \neq 1$.} 
In this case $\I_\mu(\zeta \eta) = \cX$ but $\II_\mu(\zeta, \eta)$ is contained in $\cY \subset \cX$. Therefore $\overline{ev}_3$ factors as the composition
\[
\II_\mu(\zeta, \eta) \xrightarrow{\iota} \cY \xrightarrow{j} \cX.
\]
The pushforward $\iota_*$ can be computed as in the case $\zeta \eta \neq 1$ and we have
\begin{align*}
\alpha e_\zeta \star \beta e_\eta &= j_*\iota_*([a(t)b(t)Ob(\zeta, \eta)] \;e_{\zeta, \eta} )\\
&= j_*\left(\Big[a(t)b(t) \prod_{\substack{a \in \ZZ,\; \zeta^a\text{ or } \eta^a \neq 1\\\arg \zeta^{a_k} + \arg \eta^{a_k}>1}} \be_k \prod_{\substack{a \in \ZZ_{>0},\; \zeta^a\text{ or } \eta^a \neq 1 \\\arg \zeta^{a_k} + \arg \eta^{a_k}=1}} \be_k\Big]\right) \;e_{1}.
\end{align*}
Finally we compute $j_*$ using the discussion in Section \ref{S:chowring}: at the level of polynomials it is simply multiplication by $-t = \be_{-1}$. Since $-1$ satisfies $\zeta^{-1} \neq 1$ in this case we recover the desired formula. We note that this factor of $-t$ also ensures the polynomial $a(t)b(t)C_{\zeta \eta}(t)$ defines an element of $A^*(\cX) = A^*(\I_\mu(\zeta \eta))$: the constant term of $a(t)b(t)$ will necessarily be in $A^*(Y)$, and this factor of $-t$ ensures that the constant term of $a(t)b(t)C_{\zeta \eta}(t)$ will be in $A^*(X)$ (in fact, it will be zero).
\end{proof}

\begin{lemma}\label{lem:push}
Let $F_1 \to F_2$ be an inclusion of twisted weighted vector bundles over a variety $Y$, let $\iota: \PP(F_1) \to \PP(F_2)$ be the associated inclusion of twisted weighted projective bundles, and let $n(t)$ be the polynomial representing $c^{\Gm}_{top}(N_{F_1/F_2})$ in $A^*(Y)[t] = A^*_{\Gm}(F_2)$. Let $e_{\PP(F_1)}$ denote the fundamental class of $\PP(F_1)$. Then $\iota_*(e_{\PP(F_1)}) = [n(t)]$ in the quotient of $A^*(Y)[t]$ equal to $A^*(\PP(F_2))$.
\end{lemma}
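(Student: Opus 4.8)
The plan is to realize both projectivizations as $\Gm$-quotients of the complements of their zero sections and to deduce the pushforward formula from the analogous statement on the total spaces together with flat base change. Write $p_j\colon F_j \to Y$ for the two twisted weighted total spaces, $0_j\colon Y \hookrightarrow F_j$ for the zero sections, and $j_j\colon F_j \setminus 0 \hookrightarrow F_j$ for the open immersions, so that $\PP(F_j) = [(F_j \setminus 0)/\Gm]$ and $A^*(\PP(F_j)) = A^*_{\Gm}(F_j \setminus 0)$. Using the identification $A^*_{\Gm}(F_2) = A^*(Y)[t]$ from \eqref{eq:ed-chow}, the restriction $j_2^*$ is exactly the quotient map sending a polynomial to its class $[\,\cdot\,]$ in $A^*(\PP(F_2))$: its kernel is the image of $(0_2)_*$, namely the ideal generated by $c^{\Gm}_{top}(F_2)$, by the localization sequence. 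Since the fundamental class $e_{\PP(F_1)}$ is the restriction $j_1^*(1_{F_1})$ of the fundamental class of the total space $F_1$, everything reduces to computing $i_*(1_{F_1})$ on the total spaces, where $i\colon F_1 \hookrightarrow F_2$ is the induced closed immersion, and then transporting the answer across the diagram.

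The key computation is the identity
\[
i_*(1_{F_1}) = n(t) \qquad \text{in } A^*_{\Gm}(F_2) = A^*(Y)[t].
\]
To see this, observe that $i$ is a regular closed immersion with normal bundle $N_{F_1/F_2} = p_1^*(F_2/F_1)$, the pullback of the $\Gm$-equivariant quotient bundle $F_2/F_1$ on $Y$. Concretely, $F_1 \subset F_2$ is the zero scheme of the tautological section of $p_2^*(F_2/F_1)$ on $F_2$ obtained from the quotient map, and this section is regular. Hence $[F_1] = i_*(1_{F_1})$ equals the equivariant top Chern class $c^{\Gm}_{top}\big(p_2^*(F_2/F_1)\big) = c^{\Gm}_{top}(F_2/F_1)$, which is by definition represented by $n(t)$. (Equivalently, one applies the excess intersection formula to the Cartesian square formed by $i$ and $0_2$, whose excess bundle is precisely $F_2/F_1$.) All of this may be checked smooth-locally on $Y$, where $F_1 \hookrightarrow F_2$ becomes an honest inclusion of weighted vector bundles.

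Finally I relate the two pushforwards by base change. The square
\[
\begin{tikzcd}
F_1 \setminus 0 \arrow[r, "\iota"] \arrow[d, "j_1"] & F_2 \setminus 0 \arrow[d, "j_2"]\\
F_1 \arrow[r, "i"] & F_2
\end{tikzcd}
\]
is Cartesian, since the zero section of $F_1$ is exactly $F_1 \cap 0_{F_2}$, so that $F_1 \cap (F_2 \setminus 0) = F_1 \setminus 0$. Its horizontal maps are proper ($\Gm$-equivariant closed immersions) and its vertical maps are flat ($\Gm$-equivariant open immersions), so the compatibility of proper pushforward with flat pullback in equivariant Chow theory gives $j_2^* \circ i_* = \iota_* \circ j_1^*$. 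Combining the three steps,
\[
\iota_*(e_{\PP(F_1)}) = \iota_*\big(j_1^*(1_{F_1})\big) = j_2^*\big(i_*(1_{F_1})\big) = j_2^*(n(t)) = [n(t)],
\]
as claimed.

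I expect the main obstacle to be the identity $i_*(1_{F_1}) = n(t)$: one must verify that the class of the subbundle $F_1$ in the equivariant Chow ring of the total space $F_2$ is the equivariant Euler class of the quotient, and that this remains valid in the genuinely \emph{twisted} weighted setting. This is exactly where the tautological-section (or excess-intersection) argument, together with the smooth-local reduction to honest weighted vector bundles, does the real work; the remaining ingredients—the identification of $j_2^*$ with $[\,\cdot\,]$ and the flat base change—are formal.
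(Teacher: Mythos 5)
Your overall route is the same as the paper's: the Cartesian square relating the total spaces $F_1\subset F_2$ to the complements of their zero sections, the identification $A^*(\PP(F_i))=A^*_{\Gm}(F_i\setminus Y)$, a key identity $i_*(1_{F_1})=n(t)$ in $A^*_{\Gm}(F_2)=A^*(Y)[t]$, and then compatibility of proper pushforward with flat pullback. The one step you handle differently is the key identity, and that is where there is a genuine gap. Your primary argument needs a globally defined quotient bundle ``$F_2/F_1$'' on $Y$ and a global tautological section of $p_2^*(F_2/F_1)$ with zero scheme $F_1$. In the genuinely twisted setting neither exists in general: $F_2=\Spec_Y(\sC)$ where $\sC$ is only smooth-locally a symmetric algebra, so $F_2$ is not the total space of any bundle, the quotient must be replaced by $\sN_2/\sN_1$ (the quotient of the associated weighted vector bundles, i.e.\ of the normal bundles of the zero sections), and the local ``identity'' sections fail to glue---the twisting is exactly this failure. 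Concretely, take fiber coordinates $x_1$ (along $F_1$) and $x_2,x_3$ (transverse, so $F_1=V(x_2,x_3)$) of weights $1,1,2$; the transition $x_1\mapsto x_1$, $x_2\mapsto x_2$, $x_3\mapsto x_3+x_1x_2$ is a graded automorphism preserving $F_1$ and acting trivially on $\sN_2/\sN_1$, yet it sends the local section $(x_2,x_3)$ to $(x_2,x_3+x_1x_2)$: the local tautological sections differ by elements of $\sI_{F_1}\cap\sC_+^2$ and do not patch. Your appeal to ``checking smooth-locally'' does not repair this, because regularity of a section and the identification of its zero scheme are local properties, but the \emph{existence} of a global section is not, and the formula $[Z(s)]=c^{\Gm}_{top}$ requires the global $s$.

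The good news is that your parenthetical alternative is sound and is essentially what the paper does. The excess bundle of the Cartesian square formed by $i\colon F_1\to F_2$ and the zero section $0_2\colon Y\to F_2$ is $0_1^*N_{F_1/F_2}\cong\sN_2/\sN_1$, which is intrinsically defined (no choice of section or splitting), and the excess intersection formula gives $0_2^!\,i_*(1_{F_1})=c^{\Gm}_{top}(\sN_2/\sN_1)=n(t)$; since $0_2^!$ is inverse to the isomorphism $p_2^*$, this yields $i_*(1_{F_1})=n(t)$. The paper's version is the self-intersection formula: $\tilde\iota^*\tilde\iota_*(e_{F_1})=c^{\Gm}_{top}(N_{F_1/F_2})$, combined with the fact that $\tilde\iota^*$ is the identity of $A^*(Y)[t]$. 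Both variants use only the normal bundle $N_{F_1/F_2}$, which exists canonically for the regular immersion $i$, and never a section; note also that your intermediate claim $N_{F_1/F_2}=p_1^*(F_2/F_1)$ need not hold as an equality of bundles in the twisted case, though it is harmless at the level of Chern classes because $p_1^*$ and $0_1^*$ are inverse isomorphisms. So replace the tautological-section argument by the excess-intersection (or self-intersection) computation; as written, the step your proof leans on fails precisely in the twisted case that the lemma is stated for.
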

\begin{proof}
Recall that $A^*(\PP(F_i)) = A^*_{\Gm}(F_i\setminus Y)$. We have a fiber square
\[
\begin{tikzcd}
F_1 \arrow[r, "\tilde \iota"] & F_2 \\
F_1 \setminus Y \arrow[u, "\tilde \jmath"] \arrow[r, "\iota"] & F_2 \setminus Y. \arrow[u, "\jmath"]
\end{tikzcd}
\]
By the self intersection formula, $\tilde \iota^*\tilde \iota_*(e_{F_1}) = c^{\Gm}_{top}(N_{F_1/F_2}) = n(t)$ in $A^*_{\Gm}(F_2)$. But $\tilde \iota^*: A^*_{\Gm}(F_2) \to A^*_{\Gm}(F_1)$ is an isomorphism and in fact the identity map on $A^*(Y)[t]$, so $\tilde \iota_*(e_{F_1}) = n(t)$ in $A^*_{\Gm}(F_2)$. Now using the fiber square and \cite[Prop 1.7]{fulton} we have
\[
[n(t)] = \jmath^*\tilde \iota_*(e_{F_1}) = \iota_*\tilde \jmath^*(e_{F_1}) = \iota_*(e_{F_1\setminus Y}),
\]
whence the lemma follows.
\end{proof}

\subsection{Finite generation}\label{S:generate} 

The ring $A^*_{st}(\cX)$ is naturally an algebra over $A^*(\cX)$ via the action of the untwisted sector, and $A^*(\cX)$ is in turn an algebra over $A^*(X)$ via the pullback $f^*$. In this section we ask the question, when is $A^*_{st}(\cX)$ finitely generated over $A^*(\cX)$ or $A^*(X)$? Our answers to this question require a stronger regularity assumption on $\sI_\bullet$.
\vspace{.1in}

\begin{mdframed}
For the remainder of this paper, we assume that $\sI_\bullet = \sJ_\bullet$ is of the form discussed in Section \ref{sec:special} and Assumption \ref{a2} holds.
\end{mdframed}

\vspace{.1in}

Assuming $\cX$ arises from such a Rees algebra $\sJ_\bullet$, we first find a subring of $A^*_{st}(\cX)$ that is always finitely generated over $A^*(\cX)$.

\begin{definition}
Let $A^{*}(\I_{\mu}(\zeta))^\amb$ denote the image of the restriction map $A^*(\cX) \to A^*(\I_{\mu}(\zeta))$. The \emph{ambient} classes of $A^*_{st}(\cX)$ are the elements of the subgroup
\[
A^{*}_{st}(\cX)^\amb := \oplus_{\zeta \in \Gm} A^{*}(\I_{\mu}(\zeta))^\amb.
\]
\end{definition}

The set $Z$ in the following proposition was defined in Section \ref{sec:prod}.

\begin{proposition}
\label{prop:amb}
The group $A^*_{st}(\cX)^{\amb}$ is a subring of $A^*_{st}(\cX)$ and a finitely generated algebra over $A^*(\cX)$. Explicitly, we have
\[
A^*_{st}(\cX)^{\amb} = {A^*(\cX)[e_\zeta]_{\zeta \in Z}}/I
\]
and $I$ is the ideal generated by  elements of the following forms:
\begin{enumerate}
    \item $e_{\zeta} e_{\eta} - C_{\zeta \eta} e_{\zeta\eta}$ for $\zeta, \eta \in Z$, and
    \item $\alpha e_\zeta$ for $\zeta \in Z$ and $\alpha \in \ker\left(A^*(\cX) \to A^*(\I_{\mu}(\zeta))\right)$.
\end{enumerate} 
\end{proposition}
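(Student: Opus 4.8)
The plan is to exhibit the stated presentation as the assertion that a natural comparison map is an isomorphism. Define $\phi\colon A^*(\cX)[e_\zeta]_{\zeta\in Z}\to A^*_{st}(\cX)$ to be the $A^*(\cX)$-algebra homomorphism (using the untwisted sector $A^*(\cX)\cong A^*(\cX)e_1$ for the base ring) that sends each variable $e_\zeta$ to the fundamental class $1e_\zeta$ of $\I_\mu(\zeta)$; this is a ring map because $A^*_{st}(\cX)$ is commutative and \eqref{eq:mult} is symmetric. The basic observation is that for $\gamma\in A^*(\cX)$ one has $\gamma e_1\star e_\zeta=(j^*\gamma)e_\zeta$, since $C_{1\zeta}=1$ (the required condition $\arg\zeta^a\ge 1$ is never met, as $\arg\zeta^a\in[0,1)$). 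Consequently $\operatorname{Im}\phi$ is exactly the $A^*(\cX)$-subalgebra generated by the $e_\zeta$, and it contains every ambient class $\alpha e_\zeta$. It then remains to prove: (i) $\operatorname{Im}\phi=A^*_{st}(\cX)^\amb$, equivalently that $A^*_{st}(\cX)^\amb$ is closed under $\star$; and (ii) $\ker\phi=I$. Relation (2) clearly lies in $\ker\phi$, and finite generation will be immediate because $Z$ is finite.

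For (ii) I would argue module-theoretically. Writing $R:=A^*(\cX)[e_\zeta]_{\zeta\in Z}/I$, relation (1) reduces every monomial in the $e_\zeta$ to $c\,e_\theta$ with $c\in A^*(\cX)$ and $\theta=\zeta_1\cdots\zeta_k\in Z\cup\{1\}$ (reading $e_1=1$ and $e_\theta=0$ when $\I_\mu(\theta)$ is empty), using that the product of homomorphisms from Definition \ref{def:zetaeta} is associative and commutative. Hence $R=\sum_\theta A^*(\cX)e_\theta$ as an $A^*(\cX)$-module, and relation (2) gives $\ker\!\big(A^*(\cX)\to A^*(\I_\mu(\theta))\big)\cdot e_\theta=0$, so there is a surjection $\psi\colon\bigoplus_\theta A^*(\I_\mu(\theta))^\amb\to R$ sending $\alpha e_\theta$ to $\gamma e_\theta$ for any lift $\gamma$ of $\alpha$ (well-defined by relation (2)). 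By the computation of the previous paragraph, $\phi\circ\psi$ is the identity on $\bigoplus_\theta A^*(\I_\mu(\theta))^\amb=A^*_{st}(\cX)^\amb$, so $\psi$ is injective, hence an isomorphism, with $\phi$ as inverse. This simultaneously yields (i) and (ii) and identifies $R$ with $A^*_{st}(\cX)^\amb$.

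The heart of the matter, and the main obstacle, is showing $e_\zeta\star e_\eta$ is ambient; this is what makes $A^*_{st}(\cX)^\amb$ a subring and what makes the coefficient $C_{\zeta\eta}\in A^*(\cX)$ in relation (1) meaningful. By \eqref{eq:mult} we have $e_\zeta\star e_\eta=[C_{\zeta\eta}(t)]\,e_{\zeta\eta}$, and since $A^*(\cX)\to A^*(\I_\mu(\zeta\eta))$ factors through $A^*(\cY)$, Lemma \ref{lem:restriction} reduces the claim to showing the constant term $C_{\zeta\eta}(0)$ lies in $\operatorname{Im}(i^*\colon A^*(X)\to A^*(Y))$. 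I would check this factor by factor: the possible factor $\be_{-1}=-t$ has constant term $0$, while for $a>0$ the factor $\be_a$ has constant term $c_{\mathrm{top}}(\sN_a)$, and since $\sN_a=\bigoplus_{b_k=a}(J_k/J_k^2)^\vee$ with each $V(J_k)\hookrightarrow X$ a regular embedding, the self-intersection formula gives $c_{\mathrm{top}}\big((J_k/J_k^2)^\vee\big)=i^*[V(J_k)]$, so $c_{\mathrm{top}}(\sN_a)\in\operatorname{Im}(i^*)$. As $\operatorname{Im}(i^*)$ is a subring, $C_{\zeta\eta}(0)\in\operatorname{Im}(i^*)$, establishing ambientness; when $\zeta\eta=1$ the factor $-t$ is automatically present (since $\arg\zeta^{-1}+\arg\eta^{-1}=1$), which is exactly what forces $[C_{\zeta\eta}(t)]$ into $A^*(\cX)$ as in the proof of Proposition \ref{prop:prod}. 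Fixing such lifts $C_{\zeta\eta}\in A^*(\cX)$ makes relation (1) well-posed, and altering a lift changes it only by a relation of type (2), so the ideal $I$ is independent of these choices.
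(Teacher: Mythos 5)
Your proof is correct and follows essentially the same route as the paper: the crux in both arguments is that $C_{\zeta\eta}$ is ambient because each nontrivial factor $\be_a$ has constant term $c_{top}(N_{V(J_k)/X})|_Y = i^*[V(J_k)]$ by the self-intersection formula (the paper phrases this as each factor being ambient, you phrase it via constant terms and Lemma \ref{lem:restriction}, which is the same criterion), after which relation (1) reduces every monomial to a linear term $c\,e_\theta$ and the direct-sum decomposition of $A^*_{st}(\cX)^{\amb}$ forces the linear coefficients into relation (2). Your closing observation that $I$ is independent of the chosen lifts $C_{\zeta\eta}\in A^*(\cX)$ modulo type-(2) relations is a well-posedness point the paper leaves implicit, but it does not alter the substance of the argument.
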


\begin{proof}
To show that $A^*_{st}(\cX)^{\amb}$ is closed under multiplication, by Proposition \ref{prop:prod} we only need to show that the elements $C_{\zeta \eta} \in A^*(\I_{\mu}(\zeta\eta))$ are restrictions of classes in $A^*(\cX)$. The nontrivial factors of $C_{\zeta \eta}$ are equal to $\be_{-1} = -t$ and to $\mathbf e_k$, and it follows from Example \ref{ex:special} that
\[
    \mathbf e_k = c_{r_k}\left(\left(J_k/J_k^2\right)^\vee\right)+a_k tc_{r_k-1}\left(\left(J_k/J_k^2\right)^\vee\right)+\dots+ a_k^{r_k}t^{r_k}
    = c_{r_k}(N_{V(J_k)/X}|_Y) + tq(t)
\]
for some \(q(t) \in A^*(Y)[t]\), where $V(J_k)$ is the subvariety of $X$ defined by the ideal $J_k$.  By self-intersection formula,
\[
    c_{r_k}(N_{V(J_k)/X}|_Y) = c_{r_k}(N_{V(J_k)/X})|_Y = i^*[V(J_k)]
\]
is in the image of $i$. It follows that \(\mathbf e_k\) is ambient. 
The class $\be_{-1} = -t$ is clearly ambient.

Next, the $A^*(\cX)$-algebra map
\[
A^*(\cX)[e_\zeta]_{\zeta \in Z} \to A^*_{st}(\cX)^{\amb}
\]
sending \(e_\zeta\) to the fundamental class of \(\I_{\mu}(\zeta)\) is clearly surjective and contains $I$ in its kernel. We show that the kernel is contained in $I$. Let $\alpha$ be an element of the kernel. Write $\alpha = \sum_{J \in \mathbb{N}^Z} \alpha_J m^J$ where $\alpha_J \in A^*(\cX)$ and $m^J = \prod_{\zeta \in Z} e_\zeta^{J_\zeta}$. Modulo $I$ we may write $\alpha$ as $\sum_{\zeta \in Z}\alpha'_\zeta e_\zeta$ for some $\alpha'_\zeta \in A^*(\cX)$---this uses that $C_{\zeta \eta}$ is ambient. But since $A^*_{st}(\cX)^\amb$ is a direct sum and $\alpha$ is zero in here, we must have that $\alpha'_\zeta$ is in the kernel of $A^*(\cX) \to A^*(\I_{\mu}(\zeta))$. So $\alpha$ is zero modulo $I$.
\end{proof}

The next question is when $A^*_{st}(\cX)^{\amb}$ is equal to $A^*_{st}(\cX)$. To answer this question we will need a lemma which may be of independent interest. Let $Y' \subset X$ be a smooth closed subvariety containing $Y$, so the normal bundle $N' := N_{Y'/X}$ is (after restriction) a subbundle of $N_{Y/X}$. Assume moreover that $N'$ is a $\Gm$-equivariant subbundle of $N_{Y/X}$; that is, the $\Gm$-action on $N_{Y/X}$ preserves $N'$. In our applications, we will take $Y'$ to be the intersection of some of the varieties $V(J_k)$ for $k=1 ,\ldots, m$.
We have a natural closed immersion $\sProj_Y(N') \to\cY$, where $\sProj_Y(N')$ denotes the weighted proj using the $\Gm$-action on $N'$. Consider the composition of restriction maps
\begin{equation}\label{eq:compo}
A^*(\cX) \to A^*(\cY) \to A^*(\sProj_Y(N'))
\end{equation}
\begin{lemma}\label{lem:classic}
The composition \eqref{eq:compo} is surjective if and only if $i^*$ is surjective. In particular, taking $N' = N$, we have that $j^*$ is surjective if and only if $i^*$ is surjective.
\end{lemma}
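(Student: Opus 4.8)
The plan is to compute everything through the weighted-projective-bundle presentations of \cite[Thm 3.12]{AO}. Write $P(t)=c^{\Gm}_{top}(\sN)$, so that $A^*(\cY)=A^*(Y)[t]/(P(t))$ by \eqref{eq:ed-chow}, and set $Q(t)=c^{\Gm}_{top}(N')$, viewed as a polynomial in $t$ with coefficients in $A^*(Y)$, so that likewise $A^*(\sProj_Y(N'))=A^*(Y)[t]/(Q(t))$. First I would identify the restriction map $A^*(\cY)\to A^*(\sProj_Y(N'))$ explicitly. Since $N'$ is a $\Gm$-equivariant subbundle of $\sN$ and $\Gm$ is linearly reductive, the inclusion splits equivariantly as $\sN=N'\oplus(\sN/N')$, whence $P(t)=Q(t)\cdot c^{\Gm}_{top}(\sN/N')$ and in particular $Q(t)$ divides $P(t)$. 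Because $t$ is the first Chern class of the weight-one tautological line bundle on each of the two bundles it restricts to $t$, and classes pulled back from $Y$ restrict to themselves; hence the restriction map is the natural $A^*(Y)$-algebra surjection $[a(t)]\mapsto[a(t)]$. Composing with $j^*$ and invoking Lemma \ref{lem:restriction}, which gives $j^*(tq_1(t),\beta)=[tq_1(t)+i^*\beta]$, I find that the image of \eqref{eq:compo} is exactly
\[
\bigl\{\,[\,tq_1(t)+i^*\beta\,]\ :\ q_1(t)\in A^*(Y)[t],\ \beta\in A^*(X)\,\bigr\}\subseteq A^*(Y)[t]/(Q(t)).
\]

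The forward implication is then immediate: if $i^*$ is surjective, then $i^*\beta$ ranges over all of $A^*(Y)$ while $tq_1(t)$ ranges over all multiples of $t$ in $A^*(Y)[t]$, so together they span $A^*(Y)[t]$ and hence the whole quotient, making \eqref{eq:compo} surjective. For the converse, let $R:=i^*(A^*(X))\subseteq A^*(Y)$, which is a graded subring since $i^*$ is a graded ring map. I would post-compose the displayed image with the further quotient $A^*(Y)[t]/(Q(t))\to A^*(Y)/(Q(0))$ sending $t\mapsto 0$; this kills the terms $tq_1(t)$ and identifies $Q(0)=c_{top}(N')$, the ordinary (non-equivariant) top Chern class. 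Surjectivity of \eqref{eq:compo} therefore forces $R+(c_{top}(N'))=A^*(Y)$.

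The hard part is upgrading this congruence to genuine surjectivity $R=A^*(Y)$, since a priori we only control $i^*$ modulo the ideal generated by $c_{top}(N')$. The two inputs that close the gap are that $c:=c_{top}(N')$ is itself ambient and that it has strictly positive codimension. For the first, the self-intersection formula gives $c_{top}(N')=c_{top}(N_{Y'/X})|_Y=i^*[Y']$, so $c\in R$ (the case $N'=N$ recovers $c_{top}(N)=i^*[Y]$). For the second, assuming $Y'\subsetneq X$ (equivalently $N'\neq 0$, which holds in all our applications), $c$ lies in positive degree. A graded Nakayama induction then promotes $A^*(Y)=R+c\,A^*(Y)$ to $R=A^*(Y)$: in degree $0$ the summand $c\,A^*(Y)$ contributes nothing because $\deg c>0$, and in degree $n$ one has $c\,A^{n-\deg c}(Y)=c\,R^{\,n-\deg c}\subseteq R^n$ using the inductive hypothesis together with $c\in R$. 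This yields $i^*$ surjective, and specializing to $N'=N$ (where $\sProj_Y(N')=\cY$ and the second map in \eqref{eq:compo} is the identity) gives the stated criterion for $j^*$.
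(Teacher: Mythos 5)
Your proof is correct and follows essentially the same route as the paper: both identify the image of \eqref{eq:compo} as $\{[q(t)+i^*\beta]\}$ via the explicit presentations, extract constant terms (your $t\mapsto 0$ specialization is the paper's ``equating constant terms'') to get $A^*(Y)=i^*A^*(X)+\left(c_{top}(N')\right)$, and then upgrade to surjectivity of $i^*$ using that $c_{top}(N')=i^*[Y']$ lies in the image and has positive degree. The only differences are cosmetic: the paper closes the argument by recursion together with nilpotency of $c_{top}(N')$ where you run a graded induction on degree, and you make explicit the hypothesis $N'\neq 0$ that the paper's nilpotency claim implicitly requires.
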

\begin{proof}
The chow rings and restriction maps in \eqref{eq:compo} may be written explicitly as
\[
\begin{tikzcd}[row sep=1mm]
\frac{A^*(Y)[t] \cdot t \oplus A^*(X)}{J} \arrow[r] & \frac{A^*(Y)[t]}{c^{\Gm}_{top}(N_{Y/X})} \arrow[r] & \frac{A^*(Y)[t]}{c^{\Gm}_{top}(N')}\\
(q(t), \beta) \arrow[r, mapsto] & {[q(t) + i^*\beta]} \arrow[r, mapsto] & {[q(t) + i^*\beta]}
\end{tikzcd}
\]
where $J$ is the ideal of relations described in \eqref{eq:bl-chow}. From this it is clear that if $i^*$ is surjective, so is \eqref{eq:compo}. Conversely suppose \eqref{eq:compo} is surjective and let $\alpha \in A^*(Y)$. Let $g': \sProj_Y(N') \to Y$ be the natural map.
Then we can find \(\beta \in A^*(X)\) and \(q(t) \in A^*(Y)[t]\cdot t\) such that $g'^*(\alpha)$ is \eqref{eq:compo} applied to $(q(t), \beta)$; i.e., 
    \[
        g'^*(\alpha) = [q(t) + i^*\beta] \in A^*(Y)[t]/c^{\mathbb G_m}_{top}(N').
    \]
    If we write $Q(t) = c^{\mathbb G_m}_{top}(N')$, then we may write
    \[
        \alpha - i^*\beta - q(t) = s(t)Q(t)
    \]
    for some \(s(t) \in A^*(Y)[t]\). Equating constant terms, we get
    \[
        \alpha = i^*\beta + c_{top}(N') s(0) \in A^*(Y).
    \]
    We have shown that for arbitrary $\alpha \in A^*(Y)$ we can write $\alpha = i^*\beta  + c_{top}(N') \alpha'$ for some \(\beta\in A^*(X)\) and $\alpha' \in A^*(Y)$. Note that \(c_{top}(N') = i^*[Y']\) is in the image of $i^*$. Hence, by recursively applying this decomposition to $\alpha'$, 
    we can show for that for any $k>0$ we can write
    \[
        \alpha  = \gamma + c_{top}(N')^k \alpha'
    \]
    where $\gamma$ is in the image of $i^*$. Since $c_{top}(N')$ is nilpotent we have that $\alpha$ is in the image of $i^*$.
\end{proof}

\begin{corollary}\label{cor:classic}
The ring $A^*(Y)$ is finitely generated as a module (resp. as an algebra) over $i^*A^*(X)$ if an only if $A^*(\cY)$ is finitely generated as a module (resp. as an algebra) over $j^*A^*(\cX)$.
\end{corollary}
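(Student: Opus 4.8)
The plan is to reduce everything to the explicit presentation $A^*(\cY) = A^*(Y)[t]/(P(t))$ of \eqref{eq:ed-chow} together with the description of $j^*$ in Lemma \ref{lem:restriction}. Throughout write $S := A^*(Y)$ and $R := i^*A^*(X) \subseteq S$, and let $P(t)$ be homogeneous of degree $r = \operatorname{codim}(Y \subset X)$. The crucial preliminary observation, which I would record once and use in both directions, is that Lemma \ref{lem:restriction} identifies $j^*A^*(\cX)$ with the image in $S[t]/(P)$ of the subring $R + t\,S[t] \subseteq S[t]$; in particular $t = j^*(t,0) \in j^*A^*(\cX)$ and $R = g^*i^*A^*(X) = j^*f^*A^*(X) \subseteq j^*A^*(\cX)$, so that $R[t] \subseteq j^*A^*(\cX)$.

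For the ``if'' direction I would argue directly. Suppose $S$ is generated as a module (resp.\ algebra) over $R$ by $\omega_1,\dots,\omega_k$. Given a class $[\sum_n a_n t^n] \in A^*(\cY)$ with $a_n \in S$, expanding each $a_n$ in the $\omega_i$ and collecting powers of $t$ rewrites it as $\sum_i [q_i(t)]\,[\omega_i]$ with $q_i(t) \in R[t] \subseteq j^*A^*(\cX)$; hence the $[\omega_i]$ generate $A^*(\cY)$ over $j^*A^*(\cX)$ (the algebra statement is identical, since $A^*(\cY)$ is the image of $S[t] = R[t][\omega_i]$). This step uses $t \in j^*A^*(\cX)$ essentially: note that $A^*(\cY)$ need not even be finite over $g^*A^*(Y)$ (for instance $A^*(B\mu_a) = \ZZ[t]/(at)$ is not a finite $\ZZ$-module), so one really must absorb the powers of $t$ into the base ring.

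The ``only if'' direction is the main obstacle, and is where the recursion of Lemma \ref{lem:classic} reappears. Since $t \in j^*A^*(\cX)$, the ideal $(t) \subseteq A^*(\cY)$ is a $j^*A^*(\cX)$-submodule, and reducing the presentation modulo $t$ gives $A^*(\cY)/(t) = S/(P(0))$ with $P(0) = c_{top}(N_{Y/X}) = i^*[Y]$ by the self-intersection formula (exactly as in the proof of Proposition \ref{prop:amb}). Writing $p_0 := i^*[Y] \in R$, the formula for $j^*$ shows that every element of $j^*A^*(\cX)$ has $t$-constant term in $R$, so the induced action of $j^*A^*(\cX)$ on $S/(p_0)$ factors through the image $\bar R$ of $R$. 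Therefore, if $A^*(\cY)$ is module- (resp.\ algebra-) finite over $j^*A^*(\cX)$, then the quotient $S/(p_0)$ is module- (resp.\ algebra-) finite over $\bar R$.

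It then remains to promote finiteness of $S/(p_0)$ over $\bar R$ to finiteness of $S$ over $R$, and here I would lift generators $\bar\omega_i$ of $S/(p_0)$ to elements $\omega_i \in S$ and recurse as in Lemma \ref{lem:classic}: one first gets $S = \sum_i R\omega_i + p_0 S$ (resp.\ $S = R[\omega_i] + p_0 S$), and since $p_0 \in R$ this improves to $S = \sum_i R\omega_i + p_0^m S$ (resp.\ $S = R[\omega_i] + p_0^m S$) for all $m$; as $p_0 = i^*[Y]$ lies in positive codimension it is nilpotent in $S$, so a large enough power vanishes and $S = \sum_i R\omega_i$ (resp.\ $S = R[\omega_i]$). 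The only point requiring sustained care is tracking which finiteness statement is being transported at each stage: the module and algebra arguments run in parallel, the sole difference being whether one closes up under $R$-linear combinations or under the $R$-subalgebra generated by the lifted generators.
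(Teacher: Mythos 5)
Your proof is correct and is essentially the paper's own argument: the ``if'' direction is read off from the presentations of $A^*(\cX)$ and $A^*(\cY)$ (using $R[t] \subseteq j^*A^*(\cX)$), and the ``only if'' direction is the recursion of Lemma \ref{lem:classic} --- your reduction modulo the ideal $(t)$ is just a repackaging of the paper's ``equate constant terms,'' followed by the same lift-and-iterate step using $p_0 = i^*[Y] \in i^*A^*(X)$ and its nilpotence. If anything, your write-up makes explicit the details the paper leaves as ``the natural extension of the proof of Lemma \ref{lem:classic},'' but the underlying mechanism is identical.
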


\begin{proof}
Let $p_1(t), \ldots, p_n(t) \in A^*(Y)[t]$ be finitely many polynomials such that the $[p_i(t)]\in A^*(\cY)$ generate $A^*(\cY)$ as a module (resp. algebra) over $A^*(\cX)$. Then the natural extension of the proof of Lemma \ref{lem:classic} shows that the constant terms $p_1(0), \ldots, p_n(0)$ generate $A^*(Y)$ as a module (resp. algebra) over $A^*(X)$. For example, in the finite type case the equality
    \[
        g'^*(\alpha) = [q(t) + i^*\beta] \in A^*(Y)[t]/c^{\mathbb G_m}_{top}(N').
    \]
    would be replaced by
        \[
        g'^*(\alpha) = \sum_k [q_k(t) + i^*\beta_k] [p_k(t)]^{n_k} \in A^*(Y)[t]/c^{\mathbb G_m}_{top}(N').
    \]
    We can then proceed by the same argument.

Conversely, it follows from the presentations of \(A^*(\cX)\) and \(A^*(\cY)\) that if \(i^*: A^*(X) \to A^*(Y)\) is finite (resp.\ of finite type) then \(j^*:A^*(\cX) \to A^*(\cY)\) is finite (resp.\ of finite type).
\end{proof}
\begin{theorem}\label{thm:later}
Assume at least one of the degrees $b_k$ is not 1. Then the following are equivalent:
\begin{enumerate}
\item $i^*: A^*(X) \to A^*(Y)$ is surjective 
\item $A^*_{st}(\cX)^\amb$ is equal to $A^*_{st}(\cX)$
\item $A^*_{st}(\cX)$ is generated as an algebra over $A^*(\cX)$ by $\{e_\zeta\}_{\zeta \in Z}$.
\end{enumerate}
In this case, $A^*_{st}(\cX)$ is the algebra
\begin{equation}\label{eq:presentation}
    A^*_{st}(\cX) = A^*(X)[t,e_\zeta]_{\zeta \in Z}/J
\end{equation}
where $J$ is the ideal generated by
\begin{enumerate}
\item[(i)] the element $Q(t):=\prod_{k=1}^m \mathbf{e}_k - \prod_{k=1}^m c_{r_k}\left(\left(J_k/J_k^2\right)^\vee\right)+ [Y]$ and elements of the form $t\cdot \ker i^*$,\ 
\item[(ii)] elements of the form $e_\zeta\cdot\ker i^*$ or $e_\zeta\cdot\prod_{\zeta^{a_k}=1}\mathbf{e}_k$ for $\zeta\in Z$, and
\item[(iii)] elements of the form $ e_{\zeta} e_{\eta} - C_{\zeta \eta} e_{\zeta\eta}$ for $\zeta, \eta \in Z.$
\end{enumerate}
\end{theorem}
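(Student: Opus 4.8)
The plan is to prove the three equivalences first, using results already in hand, and then assemble the explicit presentation from the presentation of $A^*(\cX)$ together with that of $A^*_{st}(\cX)^\amb$ in Proposition \ref{prop:amb}.

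\emph{The equivalences.} The implication (2)$\Leftrightarrow$(3) is immediate from Proposition \ref{prop:amb}, which identifies $A^*_{st}(\cX)^\amb$ with the $A^*(\cX)$-subalgebra of $A^*_{st}(\cX)$ generated by the classes $e_\zeta$; this subalgebra is everything precisely when (3) holds. For (1)$\Leftrightarrow$(2), recall from Example \ref{ex:special} that for nontrivial $\zeta \in Z$ the sector $\I_\mu(\zeta)$ is the weighted projectivization $\sProj_Y(N'_\zeta)$ of the $\Gm$-equivariant subbundle $N'_\zeta = \bigoplus_{\zeta^{b_k}=1}(J_k/J_k^2)^\vee \subseteq N_{Y/X}$, and that $N'_\zeta = N_{Y'_\zeta/X}|_Y$ for the subvariety $Y'_\zeta = \bigcap_{\zeta^{b_k}=1} V(J_k) \supseteq Y$, smooth by Assumption \ref{a2}. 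Lemma \ref{lem:classic} then says that $A^*(\cX) \to A^*(\I_\mu(\zeta))$ is surjective if and only if $i^*$ is. Since $A^*(\I_\mu(\zeta))^\amb$ is by definition the image of this restriction, surjectivity of $i^*$ forces $A^*(\I_\mu(\zeta))^\amb = A^*(\I_\mu(\zeta))$ for every $\zeta$ (the case $\zeta=1$ being trivial), giving (1)$\Rightarrow$(2). Conversely, since some $b_k \neq 1$ the set $Z$ is nonempty; choosing any $\zeta \in Z$, condition (2) makes $A^*(\cX) \to A^*(\I_\mu(\zeta))$ surjective on that summand, and Lemma \ref{lem:classic} returns surjectivity of $i^*$.

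\emph{A presentation of $A^*(\cX)$.} Assume now (1). The first task is to show the $A^*(X)$-algebra map $\phi\colon A^*(X)[t] \to A^*(\cX)$ sending $t$ to $-[\cY]$ is surjective with kernel $J_0 := (Q(t),\, t\cdot\ker i^*)$, so $A^*(\cX) = A^*(X)[t]/J_0$. Surjectivity holds because any $(q(t),\beta)$ in the description \eqref{eq:bl-chow} is hit: writing $q(t) = \sum_{j\geq 1} c_j t^j$ with $c_j \in A^*(Y)$ and lifting each $c_j$ through the surjection $i^*$, the identity $t\cdot f^*\gamma = (t\,i^*\gamma, 0)$ gives $\phi(\sum_j \tilde c_j t^j + \beta) = (q(t),\beta)$. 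For the kernel, one checks $t\cdot\ker i^* \subseteq \ker\phi$ directly and that $\phi(Q(t)) = 0$ realizes the $\alpha=1$ relation $((P(t)-P(0)),-i_*(1))$ of \eqref{eq:bl-chow}; surjectivity of $i^*$ shows every relation $((P(t)-P(0))\alpha, -i_*\alpha)$ is an $A^*(X)$-multiple of the $\alpha=1$ relation modulo $\ker i^*$, so these two families generate $\ker\phi$. I expect the fussiest point here to be the tracking of signs and of the chosen lifts of the coefficients of $P(t)$, which are well-defined only modulo $t\cdot\ker i^*\subseteq J_0$.

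\emph{Assembling the presentation.} Under (1)--(3) we have $A^*_{st}(\cX) = A^*_{st}(\cX)^\amb$, so Proposition \ref{prop:amb} gives $A^*_{st}(\cX) = A^*(\cX)[e_\zeta]_{\zeta \in Z}/I$ with $I$ generated by the $e_\zeta e_\eta - C_{\zeta\eta}e_{\zeta\eta}$ and by $\alpha e_\zeta$ for $\alpha \in \ker(A^*(\cX)\to A^*(\I_\mu(\zeta)))$. Substituting $A^*(\cX) = A^*(X)[t]/J_0$ yields $A^*_{st}(\cX) = A^*(X)[t, e_\zeta]/J$, and it remains to match $J$ with (i)--(iii). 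The generators of $J_0$ give (i) and the classes $e_\zeta e_\eta - C_{\zeta\eta}e_{\zeta\eta}$ give (iii). For (ii) one computes $\ker(A^*(\cX)\to A^*(\I_\mu(\zeta)))$: by $j^*$ and Lemma \ref{lem:restriction} the composite $A^*(X)[t] \to A^*(\I_\mu(\zeta)) = A^*(Y)[t]/\prod_{\zeta^{a_k}=1}\be_k$ is reduction of coefficients along $i^*$ followed by the quotient, so its kernel is generated modulo $J_0$ by $\prod_{\zeta^{a_k}=1}\be_k$ together with $\ker i^*$; multiplying by $e_\zeta$ produces the two families in (ii). The one subtlety is that (ii) omits $e_\zeta t^j\cdot\ker i^*$ for $j \geq 1$: these are already forced, since $t\cdot\ker i^*\subseteq J$ gives $e_\zeta t^j\gamma = e_\zeta t^{j-1}(t\gamma) \equiv 0$ for $\gamma\in\ker i^*$. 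The main obstacle throughout is exactly this kernel-tracking, i.e.\ showing that the ideal $I$ of Proposition \ref{prop:amb}, pulled back along $A^*(\cX)=A^*(X)[t]/J_0$, is generated by precisely the listed elements and no more; this rests on the reductions just described and on the identity $\be_k = c_{r_k}(N_{V(J_k)/X}|_Y) + t\,q(t)$ from Example \ref{ex:special} that exhibits each $\be_k$ as ambient.
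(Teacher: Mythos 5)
Your proposal is correct and follows essentially the same route as the paper's proof: (2)$\Leftrightarrow$(3) via Proposition \ref{prop:amb}, (1)$\Leftrightarrow$(2) via Lemma \ref{lem:classic} applied to the sectors (the paper picks the single nonempty sector $\I_\mu(\zeta)$ with $\zeta$ of order $b_k\neq 1$ for the converse, exactly as you do), and the presentation by combining the Keel-type presentation $A^*(\cX)=A^*(X)[t]/\left(t\cdot\ker i^*,\,Q(t)\right)$ with the computation that $\ker\left(A^*(\cX)\to A^*(\I_\mu(\zeta))\right)$ is generated by $\ker i^*$ and $\prod_{\zeta^{b_k}=1}\be_k$, then invoking Proposition \ref{prop:amb}. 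The only difference is that you re-derive that presentation of $A^*(\cX)$ by hand from \eqref{eq:bl-chow}, whereas the paper simply cites \cite[Cor 6.5]{AO}; your derivation (surjectivity from surjectivity of $i^*$, kernel generated by the $\alpha=1$ relation and $t\cdot\ker i^*$) is the standard argument and is sound up to the sign conventions in \eqref{eq:bl-chow}, which you rightly flag as the fussy point.
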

\begin{proof}
Let $b_k$ be a degree different from 1 and let $\zeta: \mu_{b_k} \to \Gm$ be the homomorphism Cartier dual to the map $\ZZ \to \ZZ_{a_k}$ sending $1$ to $[1]$. 
Then from Section \ref{S:sectors} we have that $\I_\mu(\zeta) $ is contained in $\cY$ and is nonempty. It follows from Lemma \ref{lem:classic} that $A^*(X) \to A^*(Y)$ is surjective if and only if $A^*(\cX) \to A^*(\I_\mu(\zeta))$ is surjective, and it follows that (1) and (2) are equivalent. 
Statements (2) and (3) are equivalent by the presentation in Proposition \ref{prop:amb} for $A^*_{st}(\cX)^\amb$. 

For the explicit presentation, we first apply \cite[Cor 6.5]{AO} to write
    \begin{equation}\label{eq:ambient-cX}
        A^*(\cX) \simeq \frac{A^*(X)[t]}{\left(t\cdot \ker i^*, \; \prod_{k=1}^m \mathbf{e}_k - \prod_{k=1}^m c_{r_k}\left(\left(J_k/J_k^2\right)^\vee\right)+ [Y]\right)},
    \end{equation}
    where the isomorphism is induced by the map from the right hand side to \eqref{eq:bl-chow} sending $\sum_{j \geq 0} \alpha_j t^j$ to the pair $(\sum_{j \geq 1} i^*\alpha_jt^j, \alpha_0)$.
    With this identification, when $\zeta$ is nontrivial we have that $I_\mu(\zeta) \subset \cY$ and the kernel of the composition 
    \begin{equation}
A^*(\cX) \to A^*(\cY) \to A^*(I_\mu(\zeta))
\end{equation} 
is generated by \(\ker i^*\) and \(\prod_{\zeta\in\mu_{b_k}}\mathbf{e}_k\). The result follows by applying Proposition \ref{prop:amb}.
\end{proof}

\subsection{Example}
When the blow-up center is a Rees algebra $\sJ_\bullet = (J, b)$ generated in a single degree $b$, the weighted blowup $\cX$ is the $b$-th root stack along the exceptional divisor of the usual blowup of $X$ along the ideal sheaf $J$, and as a result the ring $A^*_{st}(\cX)$ is particularly simple. 

To compute the cyclotomic inertia stack, observe that if $I_\mu(\zeta)$ is nonempty and $\zeta: \mu_{b'} \to \Gm$ is not trivial then $b'|b$, and in this case $I_\mu(\zeta)$ is equal to the exceptional divisor $\cY$. Thus 
if we let $\zeta: \mu_b \to \Gm$ denote the inclusion, the inertia stack is
\[
    \I_\mu (\cX) = \cX \amalg \left(\coprod_{n=1}^{b-1} \cY \right).
\]
and as a $\ZZ$-module we have
\[
    A^*_{st}(\cX) = A^*(\cX) e_1 \oplus \bigoplus_{n=1}^{b-1} A^*(\cY) e_{\zeta^n}.
\]
Write $\zeta^{n_1+n_2}$ for the homomorphism $\zeta^{n_1}\zeta^{n_2}$. The sector $\I_\mu(\zeta^{n_1+n_2})$ is nonempty for all $n_1, n_2 \in \{0, \ldots, b-1\}$, so from Proposition \ref{prop:prod} we have
\begin{equation}\label{eq:prod-formula}
    e_{\zeta^{n_1}} \star e_{\zeta^{n_2}} =
    \begin{cases}
        (-t) e_{\zeta^{n_1+n_2}} & \text{if } \arg((\zeta^{n_1})^{-1}) + \arg((\zeta^{n_2})^{-1}) \ge 1 \\
        e_{\zeta^{n_1+n_2}} & \text{otherwise.}
    \end{cases}
\end{equation}
We are in the first case only when $n_1$ and $n_2$ are both positive. If $n_1+n_2=b$, then $\arg((\zeta^{n_1})^{-1}) + \arg((\zeta^{n_2})^{-1}) = 1$ and the factor of $-t$ arises from the normal bundle factor of $C_{\zeta \eta}$. If $n_1+n_2 \neq b$ then the normal bundle factor is trivial, and the obstruction factor of $C_{\zeta \eta}$ is $-t$ when $\arg((\zeta^{n_1})^{-1}) + \arg((\zeta^{n_2})^{-1}) > 1.$

\begin{remark}
A special case of this example is when $X = \Sp(k[x])$ and $\sJ_\bullet = ((x), b)$, so $\cX$ is isomorphic to the stack obtained by rooting $\AA^1_k$ at the origin to order $b$. Readers familiar with orbifold quantum products may find that our formula \eqref{eq:prod-formula} for the product differs by an involution from what they expect. This is due to the unusual presentation of the root stack when it arises as a weighted blowup: as a weighted blowup, this stack is $[\AA^2/\Gm]$ where $\Gm$ acts with geometric weights $(b, -1)$. Under the identification of $[\AA^1/\mu_b]$ with the closed substack where $x=1$ we see that $\mu_b$ is identified with its ``inverse image'' in $\Gm$; i.e., in the quotient $[\AA^1/\mu_b]$ the group $\mu_b$ is acting with weight -1. 
\end{remark}

\printbibliography

\end{document}